\global\pdfpageattr\expandafter{\the\pdfpageattr/Rotate 90}}%
\global\pdfpageattr\expandafter{\the\pdfpageattr/Rotate 0}}%
\newcommand{\SeCref}[1]{Section~\ref{#1}}
\newcommand{\vecx}{\mathbf{x}}
\newcommand{\vecy}{\mathbf{y}}
\newcommand{\calK}{\mathcal{K}}
\newcommand{\vecalpha}{{\bm \alpha}}
\newcommand{\vecrho}{{\bm \rho}}
\newcommand{\mrank}{{matrix rank}}
\newcommand{\frank}{{function rank}}
\newcommand{\etal}{\textit{et al}.}
\newcommand{\tablistcommand}{%
    \leavevmode\par\vspace{-\baselineskip}%
}
\newlist{tabitemize}{itemize}{1}
\setlist[tabitemize]{%
    leftmargin = *               ,
  label      = \textbullet     ,
  nosep                        ,
  before     = \tablistcommand ,
  after      = \tablistcommand
}
\newtheorem{theorem}{Theorem}[section]
\newtheorem{lemma}[theorem]{Lemma}
\newtheorem{corollary}[theorem]{Corollary}
\numberwithin{theorem}{section}
\newtheorem{assump}{}
\newenvironment{myassump}[2][]
  { \begin{assump}}
  {\end{assump}}
\newcommand{\norm}[1]{\left\lVert #1 \right\rVert}
\newcommand{\abs}[1]{\left\lvert #1 \right\rvert}
\renewcommand{\Re}{\operatorname{Re}}
\renewcommand{\Im}{\operatorname{Im}}
\newcommand{\real}[1]{\Re\left( #1 \right)}
\newcommand{\imag}[1]{\Im\left( #1 \right)}
\newcommand{\dif}{\,\mathrm{d}}
\title{On the numerical rank of radial basis function kernels in high dimensions}
\author{
  Ruoxi Wang
  \thanks{
    Institute for Computational and Mathematical Engineering,
    Stanford University,
    Email: ruoxi@stanford.edu
  }
\and
 Yingzhou Li
  \thanks{
     Department of Mathematics,
    Duke University,
    Email: yingzhou.li@duke.edu
  }
\and
 Eric Darve
 \thanks{
    Department of Mechanical Engineering,
    Stanford University,
    Email: darve@stanford.edu
  }
}
\date{}
\begin{document}

\maketitle

\begin{abstract}
    Low-rank approximations are popular methods to reduce the high computational cost of algorithms involving large-scale kernel matrices.
    The success of low-rank methods hinges on the matrix rank of the kernel matrix, and in practice, these methods are effective even for high-dimensional datasets. Their practical success motivates our analysis of the \emph{\frank{}},  an upper bound of the \mrank{}. In this paper, we consider radial basis functions (RBF),  approximate the
    {RBF} kernel with a low-rank representation that is a finite sum of separate products and provide explicit upper bounds on the \frank{} and the $L_\infty$ error for such approximations. Our three main results are as follows. First, for a fixed precision, the \frank{} of RBFs, in the worst case, grows polynomially with the data dimension. Second, precise error bounds for the low-rank approximations in the $L_\infty$ norm are derived in terms of the function smoothness and the domain diameters. And last, a group pattern in the magnitude of singular values for RBF kernel matrices is observed and analyzed, and is explained by a grouping of the expansion terms in the kernel's low-rank representation. Empirical results verify the theoretical results.
\end{abstract}


\section{Introduction}
\label{sec:intro}
Kernel matrices \cite{Vapnik1998, Cristianini2000, Cortes1995} are widely used across fields including machine learning, inverse problems, graph theory and PDEs~\cite{Wang2015,Rasmussen2005, Hofmann2008, Li2014, Kitanidis2015, Scholkopf2002}. The ability to generate data at the scale of millions and even billions has increased rapidly, posing computational challenges to systems involving large-scale matrices. The lack of scalability has made algorithms that accelerate matrix
computations particularly important. 

There have been algebraic
algorithms proposed to reduce the computational burden, mostly
based on low-rank approximations of the matrix or certain
submatrices~\cite{Wang2015}. The singular value decomposition
(SVD)~\cite{Golub2013} is optimal but has an undesirable cubic
complexity. Many methods \cite{Mahoney2011, Halko2011, Liberty2007, Sarlos2006,
Drineas2005,Gittens2016,Drineas2012, Zhang2010} have been proposed to accelerate the low-rank constructions with an acceptable loss of accuracy. The success of these
low-rank algorithms hinges on a large spectrum gap or a fast decay
of the spectrum of the matrix itself or its submatrices. However,
to our knowledge, there is no theoretical guarantee that these conditions always hold.

Nonetheless, algebraic low-rank techniques
are effective in many cases where the data dimension ranges from
moderate to high, motivating us to study the growth rate of \mrank{}s in high dimensions. A precise analysis of the \mrank{} is nontrivial, and we turn to analyzing its upper bound, that is, the \frank{} of
kernels that will be defined in what follows. The \emph{\frank{}} is the
 number of terms in the minimal separable form of $\mathcal{K}(\vecx, \vecy)$, when $\mathcal{K}$ is approximated by a
finite sum of separate products $h_i(\vecx)g_i(\vecy)$ where $h_i$
and $g_i$ are real-valued functions. If
the \frank{} does not grow exponentially with the data dimension,
neither will the \mrank{}.

If, however, we expand the multivariable function $\mathcal{K}(\vecx, \vecy)$ by expanding each variable in turn using $r$ function basis (per dimension), \emph{i.e.}, 
$$\sum_{i_1, \ldots, i_d, j_1, \ldots, j_d = 1}^r g_1^{(i_1)}(x_1)g_2^{(i_2)}(x_2)\ldots g_d^{(i_d)}(x_d) \;
    h_1^{(j_1)}(y_1)h_2^{(j_2)}(y_2)\ldots h_d^{(j_d)}(y_d)$$ 
where $g_k^{(i_k)}$ and $h_k^{(i_k)}$, respectively, are the $i_k$-th function bases in dimension $k$ for $\vecx$ and $\vecy$. Then, the number of terms will be $r^{2d}$, which grows exponentially with the data dimension. The exponential growth is striking in the sense that even for a moderate dimension, a reasonable accuracy would be difficult to achieve. However, in practice, people have observed much lower \mrank{}s. A plausible reason is that both the functions and the data of practical interest enjoy some special properties, which should be considered when carrying out the analysis. 

The aim of this paper, therefore, is to analytically describe the relationship between the \frank{} and the properties of the function and the data, including measures of function smoothness, the data dimension, and the domain diameter. Such relation has not been described before. We hope the conclusions of this paper on functions can provide some theoretical foundations for the practical success of low-rank matrix algorithms.

In this paper, we present three main results. First, we show that under
common smoothness assumptions and up to some precision, the \frank{}
of RBF kernels grows polynomially with increasing dimension $d$ in the
worst case. Second, we provide explicit $L_\infty$ error bounds
for the low-rank approximations of {RBF} kernel functions. And last,
we explain the observed ``decay-plateau'' behavior of the singular values of smooth RBF kernel matrices.

\subsection{Related Work}

There has been extensive interest in kernel properties in a
high-dimensional setting.

One line of research focuses on the spectrum of kernel matrices. There
is a rich literature on the smallest eigenvalues, mainly concerning 
matrix conditioning. Several papers~\cite{ball1992sensitivity,Narcowich1992,
Schaback1994,Schaback1995} provided lower bounds for the
smallest (in magnitude) eigenvalues. Some work further studied the eigenvalue
distributions. Karoui~\cite{ElKaroui2010} obtained the spectral
distribution in the limit by applying a second-order Taylor expansion
to the kernel function. In particular, Karoui considered kernel
matrices with the $(i,j)$-th entry $K(\vecx_i^T\vecx_j/h^2)$ and
$K(\|\vecx_i-\vecx_j\|_2^2/h^2)$, and showed that as data dimension
$d \rightarrow \infty$, the spectral property is the same as that of
the covariance matrix $\frac{1}{d}XX^T$. Wathen~\cite{Wathen2015}
described the eigenvalue distribution of RBF kernel matrices
more explicitly. Specifically, the authors provided formulas to
calculate the number of eigenvalues that decay like $(1/h)^{2k}$
as $h \rightarrow \infty$, for a given $k$. This group pattern in
eigenvalues was observed earlier in \cite{Fornberg2007} but with
no explanation. The same pattern also occurs in the coefficients
of the orthogonal expansion in the RBF-QR method proposed in
\cite{Fornberg2011}. There have also been studies focusing
on the ``flat-limit'' situation where $h \rightarrow \infty$
\cite{driscoll2002interpolation,schaback2008limit,fornberg2004some}.

Another line of research is on developing efficient methods
for function expansion and interpolation. The goal is to
diminish the exponential dependence on the data dimension
introduced by a tensor-product based approach. Barthelmann
\etal{}~\cite{Barthelmann2000} considered polynomial interpolation
on a sparse grid \cite{Gerstner1998}. Sparse grids are based
on a high-dimensional multiscale basis and involve only $O(N
(\log N)^{d-1})$ degrees of freedom, where $N$ is the number of
grid points in one coordinate direction at the boundary. This is in
contrast with the $O(N^d)$ degrees of freedom from tensor-product
grids. Barthelmann showed that when $d \rightarrow \infty$, the number
of selected points grows as $O(d^k)$, where $k$ is related to the
function smoothness.

Trefethen~\cite{Trefethen2016} commented that to ensure a uniform resolution in all directions, the Euclidean degree of a polynomial (defined as $\|\vecalpha\|_2$ for a multi-index $\vecalpha$) may be the most useful. He investigated the complexity of polynomials with degrees defined by 1-, 2- and $\infty-$ norms and concluded that by using the 2-norm we achieve similar accuracy as with the $\infty$-norm, but with $d\,!$ fewer points.


\subsection{Main Results}

In this paper, we study radial basis functions (RBF). RBFs are functions whose value depends only on the distance to the origin. In our manuscript, for convenience, we will consider the form $\calK(\vecx, \vecy) = f(\norm{\vecx-\vecy}_2^2)$. The square inside the argument of $f$ is to ensure that if $f$ is smooth then the RBF function is smooth as well. If we instead use $\calK(\vecx, \vecy) = f(\norm{\vecx-\vecy}_2)$ and pick $f(u) = u$ for example, the kernel $K$ is not differentiable when $\vecx=\vecy$.

We define the numerical \emph{\frank{}} of a kernel $\calK(\vecx,
\vecy)$ related to error $\epsilon$, to which we will frequently refer. 
\[
    R_\epsilon = \min \Big\{
        r \mid \exists~\{h_i\}_{i=1}^r, \{g_i\}_{i=1}^r \text{, s.t.\
        } \forall~\vecx, \vecy \in \mathbb{R}^{d}, ~\Big|\calK(\vecx,
        \vecy ) - \sum_{i=1}^r g_i(\vecx)h_i(\vecy) \Big| \le \epsilon
\Big\}
\]
where $h_i$ and $g_i$ are real functions on $\mathbb{R}^d$, and the
separable form $\sum_{i=1}^r g_i(\vecx)h_i(\vecy)$ will be referred to as a
\emph{low-rank kernel}, or a \emph{low-rank representation} of rank at most $r$. Note that the rank definition
concerns the \frank{} instead of the \mrank{}.

Our two main results are as follows. First, we show that under
common smoothness assumptions of RBFs and for a fixed precision, the \frank{} for RBF kernels
is a polynomial function of the data dimension $d$. Specifically,
the \frank{} $R = O(d^q)$ where $q$ is related to the low-rank
approximation error. Furthermore, precise and detailed error bounds will be proved.

\begin{figure}[htpb]
    \centering
    \includegraphics[width=0.5\textwidth]{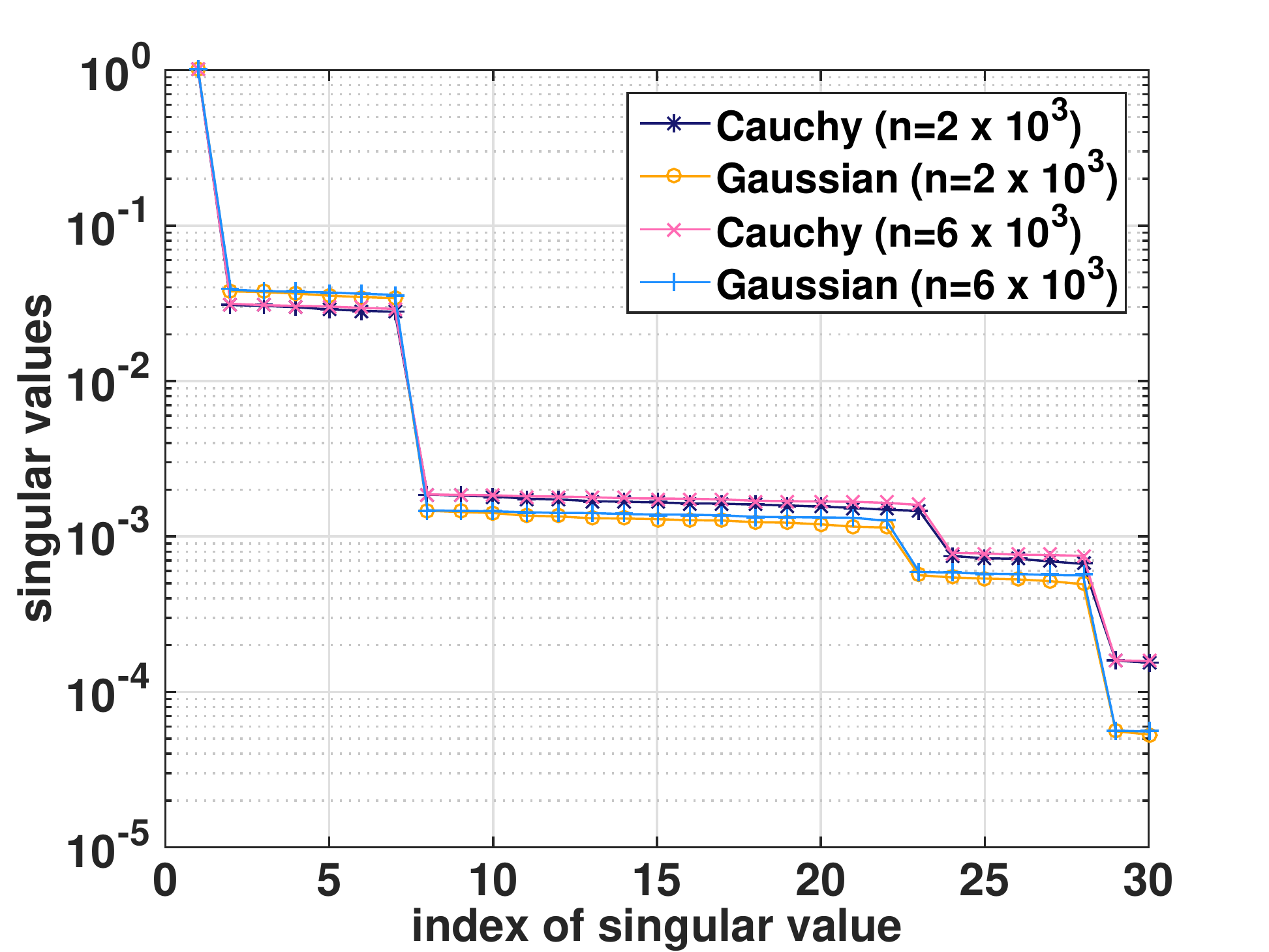}
    \caption{Group patterns in singular values. The singular values are
    normalized and ordered s.t.\ $1 = \sigma_1 \ge \sigma_2 \ge \cdots
    \ge \sigma_n$. The data were randomly generated in dimension 6
    with default random seed in MATLAB. The legend shows the data
    size and the kernel functions: Cauchy ($1/(1 + \norm{x-y}_2^2)$)
    and Gaussian ($\exp(-\norm{x-y}_2^2)$).} \label{fig:plateau_decay}
\end{figure}

Second, we observe that the singular values of RBF kernel
matrices form groups with plateaus. A pictorial example is in
\Cref{fig:plateau_decay}. There are 5 groups (plateau) of singular
values with a sharp drop in magnitude between groups; the group
cardinalities are dependent on the data dimension, but independent of
the data size. We explain this phenomenon by applying an appropriate
analytic expansion of the function and grouping expansion terms
appropriately.

\subsection{Organization}

This paper is organized as follows. \Cref{sec:theorem} presents
our theorems concerning the \frank{} of the approximation of
the {RBF} kernel function, and the $L_\infty$ error bound
of the approximations. \Cref{sec:thm_proofs} provides the
theorem proofs. \Cref{sec:optimal} shows that for a fixed
precision, the polynomial growth rate of the derived rank cannot
be improved. \Cref{sec:experiments} verifies our theorems
experimentally. Finally, in \Cref{sec:plateau_decay}, we investigate
and discuss the group pattern in the singular values of RBF kernel
matrices.


\section{Main theorems}
\label{sec:theorem}

In this section, we present theorems concerning the \frank{} and function and data properties. Each theorem approximates the {RBF} kernels in the $L_\infty$ norm with low-rank kernels where the \frank{} and the error bound are given in explicit formulas. We briefly describe the theorems and then delve into further details. 

The first four theorems consider kernels with two types of smoothness assumptions, and for each type, we present the deterministic result and the probabilistic result in two theorems, respectively. The probabilistic results take into account the concentration of measure for large data dimensions. The separable form is obtained by applying a Chebyshev expansion of $f(z)$ followed by a further expansion of $z = \|\vecx-\vecy\|_2^2$. 

The key advantage of
this approach is that the accuracy of the expansion only depends on $\|\vecx-\vecy\|_2^2$ instead of $(\vecx,\vecy)$, which lies
in a $d$-dimensional space. Assume we have expanded $f(z)$ to order $n$
with error $\epsilon$. Then, we substitute $z = \|\vecx-\vecy\|_2^2$,
 expand the result, and re-arrange the terms to identify the number
of distinct separate products of the form $h(\vecx)g(\vecy)$ in the
final representation. This number becomes our upper bound on the \frank.

The theorems show that for a fixed precision, the
\frank{} grows polynomially with data dimension $d$, and that
the $L_\infty$ error for low-rank approximations decreases with
decreasing diameter of the domain that contains $\vecx$ and
$\vecy$. 

The last theorem considers kernels with finite smoothness assumptions. The separable form is obtained by applying a Fourier expansion of $f(z)$ followed by a Taylor expansion on each Fourier term. Additional to what the previous theorems suggest, the formulas for the error and the \frank{}
capture subtler relations between different parameters, and the theorem shows that the error decreases when either the diameter of the domain that
contains $\vecx$ or that contains $\vecy$ decreases.  Before presenting our theorems, we introduce some notations.

{\bf Notations.} Let ${\bf{E}}(\cdot)$ and ${\bf Var}(\cdot)$ denote
the expectation and variance, respectively. Let
\[ E_{\rho^2} =:
\Bigl\{z = \frac{\rho^2 e^{\imath \theta} + \rho^{-2}e^{-\imath \theta}
}{2} \,\big|\, \theta \in [0, 2\pi)\Bigr\}
\]
be the \emph{Bernstein ellipse} defined on $[-1, 1]$ with parameter
$\rho^2$, an open region bounded by an ellipse. For an arbitrary
interval, the ellipse is scaled and shifted and is referred to as the transformed Bernstein ellipse. For instance, given an
interval $[a,b]$, let $\phi(x)$ be a linear mapping from $[a,b]$
to $[-1,1]$. And the \emph{transformed Bernstein ellipse} for
$[a,b]$ is defined to be $\phi^{-1}(E_{\rho^2})$. In this case, the
parameter $\rho^2$ still characterizes the shape of the transformed
Bernstein ellipse. Therefore, throughout this paper, when we say
a transformed Bernstein ellipse with parameter $\rho^2$, we refer
to the parameter of the Bernstein ellipse defined on [-1, 1]. Let
the function domain be $\Omega_\vecx \times \Omega_\vecy \subset
\mathbb{R}^d \times \mathbb{R}^d$, and we refer to $\Omega_\vecx$
as the \emph{target domain} and $\Omega_\vecy$ as the \emph{source
domain}. We assume the domain is not a manifold, where lower ranks
can be expected. Let the sub-domain containing the data of interest be
$\widetilde{\Omega}_\vecx \times \widetilde{\Omega}_\vecy \subset
\Omega_\vecx \times \Omega_\vecy$.

The following theorems assume the bandwidth parameter $h$ in
$\calK_h(\vecx, \vecy) = f(\|\vecx - \vecy\|_2^2/h^2)$ to be fixed at
1. A scaled kernel $\calK_h(\vecx, \vecy)$ will not be considered because
it can be handled by rescaling the data points instead. We start with some assumptions on the kernel type, function domain, and probabilistic distribution that will be used in the theorems, and then we present our theorems. 

\begin{myassump}{RBF Kernel Assumption}
\label{assumption:function_and_domain}
 Consider a function $f$ and kernel function $\calK(\vecx, \vecy) = 
    f(\norm{\vecx-\vecy}_2^2)$ with $\vecx=(x_1,\ldots,x_d)$ and
    $\vecy=(y_1,\ldots,y_d)$. We assume that $x_i \in [0,D/\sqrt{d}]$,
    $y_i \in [0,D/\sqrt{d}]$, where $D$ is a constant independent of
    $d$. And this implies $\|\vecx-\vecy\|_2^2\le D^2$.
\end{myassump}

\begin{myassump}{Analytic Assumption}
\label{assumption:analytic}
$f$ is analytic in $[0, D^2]$, and is
    analytically continuable to a transformed Bernstein ellipse with
    parameter $\rho_D^2 > 1$, and $|f(x)| \le C_D$ inside the ellipse.
\end{myassump}

\begin{myassump}{Finite Smoothness Assumption}
\label{assumption:finite_smooth}
$f$ and its derivatives through $f^{(q-1)}$ are absolutely continuous on $[0, D^2]$ and the $q$-th derivative has bounded total variation on $[0, D^2]$,
    $
    V\left(\frac{\dif^q f}{\dif x^q}\right)\leq V_q.
    $
\end{myassump}

\begin{myassump}{Probability Distribution Assumption}
\label{assumption:data_distribution}
$x_i$ and $y_i$ are i.i.d.\ random variables, with $x_i \sqrt{d} \in [0,D]$ and $y_i\sqrt{d} \in [0,D]$, and their second moments exist.  Let
  $$E_d = \Big(\sum_{i=1}^d {\bf E}[(x_i-y_i)^2]\Big)^{1/2} = \Big(2{\bf{E}}[(x_i\sqrt{d})^2] - 2\bigl({\bf{E}}[x_i\sqrt{d}]\bigr)^2\Big)^{1/2}$$
  and
  $$\sigma_d^2 = \sum_{i = 1}^d {\bf Var}[(x_i - y_i)^2]$$
  Then, $E_d \in \Theta(1)$ with respect to $d$, \emph{i.e.}, the mean distance between pairs of points neither goes to 0 nor $\infty$ with $d$. And $\sigma_d^2 \in \Theta(\frac{1}{d})$ (a concentration of measure).
\end{myassump}

	\begin{theorem} \label{thm:cheb1}
    Suppose the \Cref{assumption:function_and_domain} and the \Cref{assumption:analytic} hold. 
    Then, for $n \ge 0$, the kernel $\calK$ can be approximated
    in the $L_\infty$ norm by a low-rank kernel $\widetilde{\calK}$
    of \frank{} at most $R(n, d) = \binom{n+d+2}{d+2}$
        \begin{equation}
            \label{eq:low-rank-representation}
            \calK(\vecx,\vecy) = \sum_{i=1}^{R} g_i(\vecx)h_i(\vecy)
            + \epsilon_n = \widetilde{\calK}(\vecx,\vecy) + \epsilon_n
        \end{equation}
     where $\{g_i\}_{i=1}^R$ and $\{h_i\}_{i=1}^R$ are two sequences
     of $d$-variable polynomials. And the error term $\epsilon_n =
     \epsilon_n(D)$ is bounded as
        \begin{equation}
        \label{eq:error_bound_analytic}
        |\epsilon_n(D)| \le \frac{2C_D \rho_D^{-2n}}{\rho_D^2 - 1}
        \end{equation}   
\end{theorem}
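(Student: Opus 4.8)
The plan is to separate the argument into a one-dimensional approximation step and a combinatorial expansion step, exploiting the fact that $\calK$ depends on $(\vecx,\vecy)$ only through the scalar $z = \norm{\vecx-\vecy}_2^2$. By \Cref{assumption:function_and_domain} we have $z \in [0,D^2]$, so it suffices to approximate the univariate analytic function $f$ on the \emph{fixed} interval $[0,D^2]$, and the resulting $L_\infty$ error over the $2d$-dimensional domain will equal the univariate error. This is precisely what keeps the error bound dimension-free, and it is the observation I would foreground.

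For the error bound I would expand $f$ in Chebyshev polynomials rescaled to $[0,D^2]$, truncate at degree $n$, and set $\widetilde{\calK} = f_n(z)$ with $f_n(z) = \sum_{k=0}^{n} a_k T_k(z)$. \Cref{assumption:analytic} places $f$ inside a transformed Bernstein ellipse with parameter $\rho_D^2 > 1$ on which $\abs{f} \le C_D$; the standard Chebyshev coefficient estimate then gives $\abs{a_k} \le 2C_D \rho_D^{-2k}$ for $k \ge 1$, and summing the geometric tail over $k > n$ yields $\abs{\epsilon_n} = \abs{f - f_n} \le 2C_D \rho_D^{-2n}/(\rho_D^2 - 1)$, which is exactly \Cref{eq:error_bound_analytic}. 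The only point to verify is that this univariate estimate transfers verbatim, which it does since $z$ ranges within $[0,D^2]$.

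For the rank bound I would substitute $z = \norm{\vecx}_2^2 + \norm{\vecy}_2^2 - 2\,\vecx^{T}\vecy$ into the degree-$n$ polynomial $f_n$ and collect separable terms. Writing $a=\norm{\vecx}_2^2$, $b=\norm{\vecy}_2^2$, $c=\vecx^{T}\vecy$, the truncation $f_n$ becomes a polynomial in $(a,b,c)$ of total degree at most $n$, i.e.\ a combination of monomials $a^{\alpha}b^{\beta}c^{\gamma}$ with $\alpha+\beta+\gamma \le n$. Each such monomial factors as $\norm{\vecx}_2^{2\alpha}\norm{\vecy}_2^{2\beta}(\vecx^{T}\vecy)^{\gamma}$, and expanding the last factor by the multinomial theorem, $(\vecx^{T}\vecy)^{\gamma} = \sum_{\abs{\mathbf{k}}=\gamma}\binom{\gamma}{\mathbf{k}}\prod_i x_i^{k_i}y_i^{k_i}$, produces separable products $\bigl(\norm{\vecx}_2^{2\alpha}\prod_i x_i^{k_i}\bigr)\bigl(\norm{\vecy}_2^{2\beta}\prod_i y_i^{k_i}\bigr)$, with both factors being $d$-variable polynomials as required. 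After collecting like terms, the distinct products are indexed by triples $(\alpha,\beta,\mathbf{k})$ with $\alpha+\beta+\abs{\mathbf{k}} \le n$; counting these is a stars-and-bars count of placing at most $n$ units into $d+2$ bins, namely $\binom{n+d+2}{d+2}$, which gives the claimed $R(n,d)$.

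The crux of the whole argument lies in this last combinatorial bookkeeping. The exponential $r^{2d}$ count of a naive per-coordinate expansion is avoided precisely because the $\vecx$- and $\vecy$-dependence are coupled only through powers of the single bilinear form $\vecx^{T}\vecy$, whose multinomial expansion forces the \emph{same} multi-index $\mathbf{k}$ to appear on both sides. Consequently the separable products live in a $(d+2)$-parameter family rather than a $2d$-dimensional one, converting the exponential count into the polynomial $\binom{n+d+2}{d+2}=O(d^{\,n})$. I expect the care needed here to be in confirming that the expansion generates no separable products outside this index set, and that since we only want an upper bound on $R_\epsilon$ there is no need to check whether distinct triples occasionally yield proportional polynomials.
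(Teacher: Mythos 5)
Your proposal is correct and follows essentially the same route as the paper's proof: a Chebyshev truncation of $f$ on $[0,D^2]$ (your coefficient-decay argument $\abs{a_k}\le 2C_D\rho_D^{-2k}$ plus geometric tail is exactly the content of Theorem 8.2 in \cite{Trefethen2012}, which the paper cites), followed by expanding powers of $z=\norm{\vecx}_2^2+\norm{\vecy}_2^2-2\vecx^T\vecy$ and counting separable terms. The only cosmetic difference is in the bookkeeping: you count the index set $\{(\alpha,\beta,\mathbf{k}):\alpha+\beta+\abs{\mathbf{k}}\le n\}$ directly by stars and bars, while the paper evaluates the nested sum $\sum_{l=0}^n\sum_{k=0}^l(k+1)\binom{l-k+d-1}{d-1}$ via its binomial identity lemma --- both yield $\binom{n+d+2}{d+2}$.
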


   {\bf Remark.}
    If an approximation with a given maximal rank $r$ is requested, we
    need to select an $n(r,d)$ such that $\binom{n(r,d)+d+2}{d+2} \le
    r$. Then, we obtain an approximation with error $|\epsilon_n(D)|
    \le \frac{2C_D \rho_D^{-2n}}{\rho_D^2 - 1}$ and \frank{}
    at most $\binom{n(r,d)+d+2}{d+2} \le r$. The low-rank
    kernel $\widetilde{\calK}$ is of order $2n$, which can be
    revealed from the explicit form of $\widetilde{\calK}$ in
    the proof (see \SeCref{pf:Corollary2_2}). For the space of
    $d$-variate polynomials with maximum total degree $2n$, the
    dimension is $\binom{2n+d}{d}$. In contrast, our upper bound
    is $\binom{n+d+2}{d+2}$. When $d \ge 4$, our formula becomes
    favorable for a large range of $k$.
    
    \begin{corollary} \label{thm:cheb1_corollary}
        Under the same assumptions in $\Cref{thm:cheb1}$ and with
        $n$ fixed, the low-rank kernel approximation, for a fixed
        precision $\epsilon$, is achievable with a rank proportional
        to $d^{\frac{-\log c_1\epsilon}{c_2}}$, where $c_1$ and $c_2$
        are positive constants.
    \end{corollary}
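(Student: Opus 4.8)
The plan is to invert the error bound of \Cref{thm:cheb1} to express the expansion order $n$ as a function of the target precision $\epsilon$, and then read off the growth of the rank $R(n,d)=\binom{n+d+2}{d+2}$ in $d$ once this $n$ is held fixed. The conceptual point is that for a fixed precision the order $n$ is a constant in the dimension, so all the $d$-dependence lives in the binomial coefficient.

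First I would require the bound in \cref{eq:error_bound_analytic} to meet the precision, i.e.\ $\frac{2C_D\rho_D^{-2n}}{\rho_D^2-1}\le\epsilon$, and solve for $n$. Taking logarithms gives
\[
n \;\ge\; \frac{1}{2\log\rho_D}\,\log\!\frac{2C_D}{\epsilon(\rho_D^2-1)},
\]
so the smallest admissible order is $n=\big\lceil \frac{-\log(c_1\epsilon)}{c_2}\big\rceil$ with $c_1=\frac{\rho_D^2-1}{2C_D}$ and $c_2=2\log\rho_D$, both positive since \Cref{assumption:analytic} gives $\rho_D^2>1$ and $C_D>0$. The crucial observation is that this $n$ depends only on $\epsilon$ and on the analytic data $(\rho_D,C_D)$ of $f$, none of which involves $d$; hence for a fixed precision $n$ is genuinely constant in the dimension.

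Next I would analyze the rank for this fixed $n$. Rewriting the binomial coefficient,
\[
\binom{n+d+2}{d+2}=\binom{n+d+2}{n}=\frac{1}{n!}\prod_{i=3}^{n+2}(d+i),
\]
exhibits it as a polynomial in $d$ of degree exactly $n$ with leading term $d^n/n!$. Therefore $R(n,d)=\Theta(d^n)$ as $d\to\infty$, and substituting the value of $n$ from the previous step yields a rank proportional to $d^{\frac{-\log c_1\epsilon}{c_2}}$, as claimed.

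The argument is elementary, so I do not expect a genuine obstacle; the only care needed is the bookkeeping of constants when matching the inverted bound to the advertised exponent, and the remark that the integer rounding of $n$ (the ceiling) affects only the proportionality constant, not the exponent. I would emphasize that holding $n$ fixed is precisely what decouples the polynomial degree—set by the required accuracy $\epsilon$—from the dimension $d$, which is the substantive content of the corollary.
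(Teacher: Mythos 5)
Your proposal is correct and follows essentially the same route as the paper's proof: inverting the error bound \cref{eq:error_bound_analytic} to get $n = \frac{-\log c_1\epsilon}{c_2}$ with the identical constants $c_1 = \frac{\rho_D^2-1}{2C_D}$ and $c_2 = \log\rho_D^2 = 2\log\rho_D$, and then bounding $\binom{n+d+2}{d+2}$ as a degree-$n$ polynomial in $d$. Your product-formula expansion of the binomial coefficient gives the slightly sharper statement $R = \Theta(d^n)$ where the paper settles for the upper bound $\binom{n+d+2}{d+2} \le \frac{2^n d^n}{n!}$ (valid for $d \ge n+2$), but this is a cosmetic refinement rather than a different argument.
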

    The proofs of \Cref{thm:cheb1} and \Cref{thm:cheb1_corollary} can
    be found in \SeCref{pf:theorem2_1} and \SeCref{pf:Corollary2_2},
    respectively.

    \Cref{thm:cheb1} suggests that for some precision $\epsilon$, the
    \frank{} grows polynomially with increasing data dimension $d$,
    \emph{i.e.}, $R = O(d^n)$, where $n$ is determined by the desired
    precision $\epsilon$, $D, \rho_D$ and $C_D$. This can be seen from
    $R = \binom{n+d+2}{d+2}$ with $n$ fixed and $d \rightarrow \infty$.

    For a fixed $n$ and for a sub-domain $\widetilde{\Omega}_\vecx
    \cup \widetilde{\Omega}_\vecy$ with diameter $\tilde{D} < D$, the
    error bound decreases, namely in the following sense. In this case, the same function $f$ on the
    sub-domain can be analytically extended to a Bernstein ellipse
    whose parameter is larger than $\rho_D^2$, reducing the error
    bound in \cref{eq:error_bound_analytic}. Therefore, when the diameter of the domain that contains
    our data decreases, we will observe a lower approximation error
    for low-rank approximations with a fixed \frank{}, and similarly,
    we will observe a lower \frank{} for low-rank approximations with
    a fixed accuracy.

Along the same line of reasoning, for a fixed kernel on a fixed domain,
when the point sets become denser, we should expect the
\frank{} to remain unchanged for a fixed precision. The result for
\frank{}s turns out to be in perfect agreement with the observations in
practical situations on \mrank{}s, assuming there are sufficiently many points to make the \mrank{} visible before reaching a given precision. 

We now turn to the case when $d$ is large. Because we have assumed $x_i$ and $y_i$ to be in $[0,D/\sqrt{d}]$, by concentration of measure, the values of $\|\vecx-\vecy\|_2^2$ will fall into a small-sized subinterval of $[0, D^2]$ with high probability. Therefore, we are interested in quantifying this probabilistic error bound.

   \begin{theorem}
        \label{thm:cheb1_prob}
        Suppose the \Cref{assumption:function_and_domain} and the \Cref{assumption:analytic} hold, and points $\vecx$ and $\vecy$ are sampled under the probability distribution involving $D$, $\sigma_d$ and $E_d$ in \Cref{assumption:data_distribution}.   
        We define function $\tilde{f}(x- E_d^2) = f(x)$. Then, $\tilde{f}$ is analytic in $[-E_d^2, D^2-E_d^2]$, with the parameter of its transformed Bernstein ellipse to be $\tilde{\rho}_D^2 > 1$, and $|\tilde{f}(x)| \le \tilde{C}_D$ inside the ellipse.  Defining the same error $
        \epsilon_n$ as in \Cref{thm:cheb1}
         \begin{equation}
            \epsilon_n(D, \delta) = \calK(\vecx,\vecy) - \sum_{i=1}^{R} g_i(\vecx)h_i(\vecy),\,\,\text{with } R = \binom{n+d+2}{d+2}
        \end{equation}
        we obtain that for $0 < \delta < D$, with probability at least 
         \begin{equation}
         \label{eq:probability}
         1- 2\exp\left(\frac{-\delta^4 \, d}{2\sigma_d^2 \, d + 8D^2 \delta^2 / 3}\right)
         \end{equation}
        the error can be bounded by
        $$
        |\epsilon_n(D, \delta)| \le \frac{2C_D\delta^2}{D^2(\tilde{\rho}_D^2-\tilde{\rho}_D^{-2}) - \delta^2}\left(\frac{D^2 (\tilde{\rho}_D^2 - \tilde{\rho}_D^{-2})}{\delta^2}\right)^{-n}
        $$
         And with the same probability, the distance of a sampled pair will fall into the following interval
        $$
        \|\vecx-\vecy\|_2^2 \in [E_d^2 - \delta^2, E_d^2 + \delta^2]
        $$
    \end{theorem}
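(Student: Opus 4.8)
The plan is to decompose the theorem into two largely independent pieces: a concentration-of-measure bound controlling where $\|\vecx-\vecy\|_2^2$ lands, and a deterministic error estimate conditioned on that event. The probabilistic statement concerns $S = \|\vecx-\vecy\|_2^2 = \sum_{i=1}^d (x_i-y_i)^2$. Since the $x_i, y_i$ are i.i.d., the summands $(x_i-y_i)^2$ are i.i.d., with $\mathbf{E}[S] = E_d^2$ and $\mathbf{Var}[S] = \sigma_d^2$. The event we want is $|S - E_d^2| \le \delta^2$, whose complement I would bound using a Bernstein-type concentration inequality. Each term $(x_i-y_i)^2$ is bounded (since $x_i, y_i \in [0, D/\sqrt{d}]$, we have $(x_i-y_i)^2 \le D^2/d$), so Bernstein's inequality applies with a variance proxy $\sigma_d^2$ and a boundedness parameter on the order of $D^2/d$. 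Matching the claimed tail $2\exp(-\delta^4 d / (2\sigma_d^2 d + 8D^2\delta^2/3))$ should come directly from plugging these into the standard two-sided Bernstein bound; the factor $8/3 = 2 \cdot \tfrac{4}{3}$ and the $D^2/d$ bound on each centered term will need to be tracked carefully to get the exact constants.

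\medskip

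\noindent \textbf{The deterministic piece} is where the analytic machinery enters. Conditioned on $S \in [E_d^2 - \delta^2, E_d^2 + \delta^2]$, the relevant subinterval of the argument of $f$ has half-width $\delta^2$ centered at $E_d^2$, which is much smaller than the full interval $[0, D^2]$. The shift $\tilde f(x - E_d^2) = f(x)$ recenters this subinterval at the origin, so I would apply \Cref{thm:cheb1} (or rather its Chebyshev-expansion engine) to $\tilde f$ on the interval $[-\delta^2, \delta^2]$ instead of the full $[0, D^2]$. The key gain is that the parameter $\rho$ of the transformed Bernstein ellipse grows as the interval shrinks: a fixed analyticity region around a short interval corresponds to a large Bernstein parameter. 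I would make this quantitative by relating the ellipse parameter for the interval of half-width $\delta^2$ to $\tilde\rho_D^2$, the parameter for the full-domain interval. The standard relation is that for an interval of half-width $w$ embedded in an analyticity strip of half-width $b$, the Bernstein parameter $\rho$ satisfies $\rho + \rho^{-1} = 2b/w$ (up to the affine normalization), so shrinking $w$ from the full scale to $\delta^2$ multiplies the effective $\rho + \rho^{-1}$ by roughly $D^2(\tilde\rho_D^2 - \tilde\rho_D^{-2})/\delta^2$. Substituting this enlarged parameter into the error bound \cref{eq:error_bound_analytic} of \Cref{thm:cheb1} should produce exactly the claimed factor $\bigl(D^2(\tilde\rho_D^2 - \tilde\rho_D^{-2})/\delta^2\bigr)^{-n}$ together with the prefactor $2C_D\delta^2/(D^2(\tilde\rho_D^2 - \tilde\rho_D^{-2}) - \delta^2)$.

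\medskip

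\noindent \textbf{I would then combine} the two pieces: on the high-probability event the argument lies in the short recentered interval, so the deterministic bound applies with the enlarged Bernstein parameter, giving the stated error; and the probability of this event is exactly $1$ minus the Bernstein tail. The \frank{} count $R = \binom{n+d+2}{d+2}$ is inherited verbatim from \Cref{thm:cheb1}, since recentering $f$ by the constant $E_d^2$ and rescaling the interval do not change the polynomial structure of the separable expansion (a shift and scaling of $z = \|\vecx-\vecy\|_2^2$ only reshuffle coefficients among polynomials of the same total degree).

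\medskip

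\noindent \textbf{The main obstacle} I anticipate is getting the geometry of the Bernstein ellipse scaling exactly right. The delicate point is that \Cref{assumption:analytic} guarantees analyticity inside a specific ellipse with parameter $\tilde\rho_D^2$ attached to the full interval, and I must verify that the \emph{same} analyticity region, when viewed relative to the much shorter interval $[-\delta^2, \delta^2]$, yields precisely the parameter $D^2(\tilde\rho_D^2 - \tilde\rho_D^{-2})/\delta^2$ rather than some approximation of it. This requires carefully composing the two affine maps (full-interval-to-$[-1,1]$ and short-interval-to-$[-1,1]$) and tracking how the foci and semi-axes of the ellipse transform; the quantity $\tilde\rho_D^2 - \tilde\rho_D^{-2}$ is twice the imaginary semi-axis of the ellipse, which is the geometrically invariant quantity that survives the rescaling, and confirming that this is what appears — as opposed to $\tilde\rho_D^2 + \tilde\rho_D^{-2}$ or $\tilde\rho_D^2$ alone — is the crux of matching the stated constant.
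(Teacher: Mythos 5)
Your proposal follows essentially the same route as the paper's proof: Bernstein's inequality applied to the centered, independent summands $(x_i-y_i)^2-{\bf E}[(x_i-y_i)^2]$ (the paper bounds each by $4D^2/d$, which is where the $8D^2\delta^2/3$ comes from) gives exactly the stated tail, and on that event the paper re-runs the Chebyshev argument of \Cref{thm:cheb1} for the shifted $\tilde f$ on $[-\delta^2,\delta^2]$, choosing the largest $\rho_\delta^2$ whose ellipse fits inside the full-domain ellipse by matching semi-minor axes, $\rho_\delta^2-\rho_\delta^{-2}=\frac{D^2}{\delta^2}\left(\tilde\rho_D^2-\tilde\rho_D^{-2}\right)$, then using $\rho_\delta^2>\frac{D^2}{\delta^2}(\tilde\rho_D^2-\tilde\rho_D^{-2})$ and $C_\delta\le C_D$ in \cref{eq:error_bound_analytic} --- precisely the invariant you identify as the crux, with the rank inherited verbatim. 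Your intermediate heuristic ``$\rho+\rho^{-1}=2b/w$'' should read $\rho-\rho^{-1}$ (semi-minor, not semi-major, axis), but your final paragraph already corrects this, so the plan is sound.
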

    The proof of \Cref{thm:cheb1_prob} can be found in \SeCref{pf:theorem2_3}.

        In \Cref{thm:cheb1_prob}, as $d \to \infty$, $\delta$ needs to decrease with $d$ to maintain the same probability. If we choose $\delta = \left(\frac{C}{d}\right)^{1/4}$ with $C$ being a very large number, then the probability remains close to 1 because $\sigma_d^2 = \Theta(\frac{1}{d})$. Moreover, we can keep $\epsilon_n$ small while reducing $n$, because $\delta \to 0$. This means that for sufficiently large $d$ and for a given error, $n$ goes down as $d$ increases. Asymptotically, $n$ reaches 0, and the \frank{} reaches 1. On the other hand, for a fixed $n$, the error bound decreases when $d$ increases. 

Note that $2\delta^2$ is the size of the subinterval where the values of $\norm{\vecx-\vecy}_2^2$'s fall into with probability given by \cref{eq:probability} and, by concentration of measure, with the same probability, the interval size $2\delta^2$ shrinks with increasing $d$. This is consistent with what we have discussed that $\delta$ needs to decrease with $d$ to maintain the same probability. 

The analytic assumption in \Cref{thm:cheb1} and \Cref{thm:cheb1_prob} is very strong because many RBFs are not infinitely differentiable when the domain contains zero. However, most RBFs of practical interest are $q$-times differentiable. In the following theorem, we weaken the analytic assumption to a finite-smoothness assumption and compute the corresponding error bound.

\begin{theorem}
    \label{thm:cheb2}
    Suppose the \Cref{assumption:function_and_domain} and the \Cref{assumption:finite_smooth} hold.
    Then for $n > q$, the kernel $\calK$ can be approximated
    in the $L_\infty$ norm by a low-rank kernel $\widetilde{\calK}$
    of \frank{} at most $R(n, d) = \binom{n+d+2}{d+2}$
        \begin{equation*}
            \calK(\vecx,\vecy) = \sum_{i=1}^{R} g_i(\vecx)h_i(\vecy)
            + \epsilon_n = \widetilde{\calK}(\vecx,\vecy) + \epsilon_n
        \end{equation*}
     where $\{g_i\}_{i=1}^R$ and $\{h_i\}_{i=1}^R$ are two sequences
     of $d$-variable polynomials. And the error term $\epsilon_n =
     \epsilon_n(V_q, D, q)$ is bounded as
    \begin{equation}
    \label{eq:error_finite}
    |\epsilon_n(V_q, D, q)| \le  \frac{2V_qD^{2q}}{\pi q [2(n- q)]^q}
    \end{equation}
\end{theorem}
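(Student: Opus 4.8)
The plan is to mirror the structure of the analytic case (\Cref{thm:cheb1}) but replace the Chebyshev/Bernstein-ellipse machinery by a Fourier-series expansion of the one-variable function $f$ on $[0,D^2]$, followed by a Taylor truncation of each trigonometric term in the variable $z=\norm{\vecx-\vecy}_2^2$. The key observation, already stated in the introduction to this section, is that the entire approximation error is controlled through the single scalar $z\in[0,D^2]$; the dimension $d$ enters only through the bookkeeping that converts a degree-$2n$ polynomial in $z$ into a separable form in $(\vecx,\vecy)$, and that bookkeeping is identical to the one in \Cref{thm:cheb1}. Hence the rank bound $R(n,d)=\binom{n+d+2}{d+2}$ will be inherited verbatim, and the only genuinely new work is the one-dimensional error estimate \cref{eq:error_finite}.

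\textbf{Step 1 (Fourier expansion and its decay).} First I would extend $f$ from $[0,D^2]$ to a periodic function and write its Fourier series $f(z)=\sum_{k} c_k e^{\imath \omega_k z}$. Under \Cref{assumption:finite_smooth}, $f$ has $q-1$ absolutely continuous derivatives and $f^{(q)}$ has total variation at most $V_q$; the standard consequence is that the Fourier coefficients decay like $\abs{c_k}\lesssim V_q/\abs{k}^{q+1}$ (integrating by parts $q$ times and bounding the last integral by the total variation). This is the finite-smoothness analogue of the geometric coefficient decay that the Bernstein ellipse gave in the analytic case.

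\textbf{Step 2 (Taylor truncation of each Fourier mode).} For each trigonometric term $e^{\imath\omega_k z}$ I would Taylor-expand in $z$ about $0$ and keep terms through total degree $n$ (so the composite approximation is a polynomial of degree $2n$ in $\vecx-\vecy$, matching the Remark after \Cref{thm:cheb1}). The Taylor remainder for $e^{\imath\omega_k z}$ truncated at order $n-q$ on $z\in[0,D^2]$ is bounded by $(\abs{\omega_k}D^2)^{\,n-q}/(n-q)!$ times a constant, and the factor $\abs{\omega_k}^{\,n-q}$ combines with the coefficient decay $1/\abs{k}^{q+1}$ from Step 1. The arithmetic is arranged so that the surviving $k$-sum converges (the exponent offset by $q$ is exactly what makes $\sum_k \abs{\omega_k}^{n-q}/\abs{k}^{q+1}$ summable, which is why the hypothesis $n>q$ is imposed), and collecting the constants yields the bound $\frac{2V_q D^{2q}}{\pi q\,[2(n-q)]^{q}}$ in \cref{eq:error_finite}. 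Summing the two error contributions — the Fourier truncation tail and the aggregated Taylor remainders — gives the total $L_\infty$ error; the $D^{2q}$ dependence enters through the $z\le D^2$ factor raised to the relevant power, and the $[2(n-q)]^{q}$ denominator comes from the factorial growth in the Taylor remainder combined with the polynomial decay of $c_k$.

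\textbf{Step 3 (reassembly into separable form).} Finally, substituting $z=\norm{\vecx-\vecy}_2^2=\sum_i x_i^2-2\sum_i x_iy_i+\sum_i y_i^2$ into the degree-$2n$ polynomial and expanding by the multinomial theorem, each monomial factors into a product of a polynomial in $\vecx$ and a polynomial in $\vecy$; counting the distinct such separable products reproduces exactly the $\binom{n+d+2}{d+2}$ count established for \Cref{thm:cheb1}, so I would simply invoke that computation rather than redo it.

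The main obstacle I anticipate is Step 2: making the double limit clean. One must simultaneously push the Fourier truncation order and the Taylor order to infinity while keeping the per-mode remainder summable in $k$, and the constants have to be tracked carefully so that the factor $\abs{\omega_k}^{\,n-q}$ produced by differentiating the exponential does not overwhelm the $\abs{c_k}\sim V_q/\abs{k}^{q+1}$ decay. The condition $n>q$ is precisely what guarantees the resulting series converges and produces the stated closed form, so the delicate part is verifying the constant $\frac{2}{\pi q}$ and the exponent bookkeeping rather than any conceptual difficulty; the separable-form counting in Step 3 is purely combinatorial and already done in the analytic theorem.
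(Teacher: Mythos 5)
Your Step 3 (the separable-form bookkeeping and the count $\binom{n+d+2}{d+2}$) is fine and is indeed reused verbatim from \Cref{thm:cheb1}, but Steps 1--2 contain a genuine gap that makes the proposed error analysis fail. First, the coefficient decay $\abs{c_k}\lesssim V_q/\abs{k}^{q+1}$ in Step 1 presupposes a $C^{q-1}$ \emph{periodic} extension of $f$; naively periodizing $f$ from $[0,D^2]$ generically introduces jumps at the endpoints, forcing $\abs{c_k}\sim\abs{k}^{-1}$. This is precisely why the paper's own Fourier--Taylor result, \Cref{thm:fourier_taylor}, must use the mollified $4D^2$-periodic extension $f_p$ with a smooth cutoff. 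Second, and more fatally, your summability claim in Step 2 is false: with $\abs{\omega_k}\propto\abs{k}$, the aggregated per-mode Taylor remainders are controlled by $\sum_k \abs{c_k}\,\abs{\omega_k}^{\,n-q}\sim\sum_k\abs{k}^{\,n-2q-1}$, which diverges as soon as $n\ge 2q$; the hypothesis $n>q$ does not guarantee convergence, and even in the narrow range where the sum converges it does not produce the closed form $\frac{2V_qD^{2q}}{\pi q[2(n-q)]^q}$. The underlying obstruction is structural: under \Cref{assumption:finite_smooth}, $f$ possesses no derivatives beyond order $q$, so a composite approximant that is effectively a Taylor polynomial of degree $n>q$ in $z$ (the $l$-th coefficient would be $\sum_k c_k(\imath\omega_k)^l/l!$, divergent for $l\ge q$) cannot be constructed at all. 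This is also why \Cref{thm:fourier_taylor} keeps the Fourier sum \emph{finite} ($M_f$ terms, with $9M_f\le M_t$), ends up with the different rank formula $4M_f\binom{M_t+d}{d}$, and a two-term error bound --- none of which recovers \cref{eq:error_finite}.

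The paper's actual proof avoids Fourier analysis entirely and keeps the Chebyshev machinery of \Cref{thm:cheb1}, swapping only the one-dimensional error estimate: map $[0,D^2]$ linearly onto $[0,2]$, observe that the total variation of the $q$-th derivative scales as $V(\tilde f^{(q)})\le (D^2/2)^q V_q$, and invoke Theorem 7.2 of Trefethen (Chebyshev truncations of a function whose $q$-th derivative has bounded variation satisfy $\abs{\epsilon_n}\le \frac{2V(\tilde f^{(q)})}{\pi q (n-q)^q}$ for $n>q$), which yields \cref{eq:error_finite} immediately; the separation of $\norm{\vecx-\vecy}^{2l}$ and the rank count then proceed exactly as in \Cref{thm:cheb1}. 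The moral is that for finitely smooth functions the correct degree-$n$ polynomial approximant in $z$ is a Chebyshev (Jackson-type) truncation, whose error degrades gracefully to the algebraic rate $O(n^{-q})$, rather than a Taylor expansion, whose admissible order is capped by the smoothness $q$.
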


{\bf Remark.} We can weaken the assumption of $f^{(q)}$ having bounded total variation to $f^{(q-1)}$ being Lipschitz continuous, and this does not impose assumptions on $f^{(q)}$. With this weaker assumption, we obtain the same error rate $O(n^{-q})$; however, the trade off is the absent of explicit constants in the upper bound \cref{eq:error_finite}. 

The proof of \Cref{thm:cheb2} can be found in \SeCref{pf:theorem_2_4_2_5}.

Compared to \Cref{thm:cheb1}, the convergence rate slows down from a nice geometric convergence rate $O(\rho_D^{-2n})$ to an algebraic convergence rate $O(n^{-q})$. Each time the function becomes one derivative smoother ($q$ increased by 1), the convergence rate will also become one order faster. The domain diameter $D$ affects the error bound by $D^{2q}$, where $q$ represents the smoothness of the function. For a sub-domain with diameter $\widetilde{D}$, it is straightforward to obtain that the error is bounded by $\frac{2V_q\widetilde{D}^{2q}}{\pi q [2(n- q)]^q}$, and for a fixed $n$, a decrease in $\widetilde{D}$ will reduce the error. 

We also consider the phenomenon of concentration of measure and present the probabilistic result in the following theorem.
$x_i$ and $y_i$ are i.i.d.\ random variables, with $|x_i\sqrt{d}| < D$ and $|y_i\sqrt{d}| < D$, and have their second moments exist.
\begin{theorem}
    \label{thm:cheb2_prob}
    Suppose the \Cref{assumption:function_and_domain} and the \Cref{assumption:finite_smooth} hold.
    We further assume $\vecx$ and $\vecy$ are sampled under the probability distribution involving $D$, $E_d$, and $\sigma_d$ in \Cref{assumption:data_distribution}. Defining the same $\epsilon_n$ as in \Cref{thm:cheb2}
        \begin{equation*}
            \epsilon_n(V_q, \delta, q) = \calK(\vecx,\vecy) - \sum_{i=1}^{R} g_i(\vecx)h_i(\vecy), \quad
         \text{with } R = \binom{n+d+2}{d+2}
        \end{equation*}
    Then, for $0 < \delta < D$, we obtain the following bound
    $$
    |\epsilon_n(V_q, \delta, q)| \le  \frac{2V_q\delta^{2q}}{\pi q [2(n- q)]^q}
    $$
    with probability at least
    $$1- 2\exp\left(\frac{-\delta^4 \, d}{2\sigma_d^2 \, d + 8D^2 \delta^2 / 3}\right)$$
\end{theorem}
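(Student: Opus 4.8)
The plan is to combine two ingredients that are already in place: a concentration-of-measure step that confines the squared distance $\|\vecx-\vecy\|_2^2$ to a short interval, and the deterministic finite-smoothness construction of \Cref{thm:cheb2} applied on that short interval. The structure mirrors exactly the passage from \Cref{thm:cheb2} to the probabilistic analytic result \Cref{thm:cheb1_prob}; in fact the probability appearing here is identical to the one in \Cref{thm:cheb1_prob}, so the measure-concentration half of the argument can be reused verbatim.

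First I would establish the confinement of the squared distance. Writing $Z_i = (x_i-y_i)^2$, the quantity $\|\vecx-\vecy\|_2^2 = \sum_{i=1}^d Z_i$ is a sum of $d$ i.i.d.\ random variables with mean $\sum_i {\bf E}[Z_i] = E_d^2$ and variance $\sum_i {\bf Var}[Z_i] = \sigma_d^2$. Since $|x_i\sqrt d| < D$ and $|y_i\sqrt d| < D$, each term obeys $0 \le Z_i \le 4D^2/d$, hence $|Z_i - {\bf E}[Z_i]| \le 4D^2/d =: M$. Bernstein's inequality with deviation $t = \delta^2$ and per-term bound $M$ then gives
\[
    {\bf P}\!\left(\bigl|\,\|\vecx-\vecy\|_2^2 - E_d^2\,\bigr| \ge \delta^2\right) \le 2\exp\!\left(\frac{-\delta^4/2}{\sigma_d^2 + 4D^2\delta^2/(3d)}\right),
\]
and multiplying numerator and denominator inside the exponent by $2d$ reproduces precisely the probability in \cref{eq:probability}. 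Thus, with the stated probability, $\|\vecx-\vecy\|_2^2$ lies in $I_\delta := [E_d^2-\delta^2,\,E_d^2+\delta^2]$. This is exactly the confinement already asserted in \Cref{thm:cheb1_prob} (it uses only \Cref{assumption:data_distribution}, not the smoothness of $f$), so I would simply invoke it.

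Second, I would condition on the event $\{\|\vecx-\vecy\|_2^2 \in I_\delta\}$ and re-run the construction behind \Cref{thm:cheb2} with the interval $[0,D^2]$ replaced by $I_\delta$: recenter the argument of $f$ by the mean squared distance $E_d^2$, expand $f$ in a Fourier series on $I_\delta$, Taylor-expand each Fourier mode, substitute $z = \|\vecx-\vecy\|_2^2$, and regroup into separable products $g_i(\vecx)h_i(\vecy)$. Recentering is an affine shift of the argument, so it changes neither the total degree nor the regrouping/degree bookkeeping; the \frank{} is therefore still at most $R = \binom{n+d+2}{d+2}$. Only the error estimate changes. Since $I_\delta \subseteq [0,D^2]$, the total variation of $f^{(q)}$ on $I_\delta$ is still bounded by $V_q$, while the characteristic length of $I_\delta$ is now of order $\delta^2$ rather than $D^2$. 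As the truncation bound of \Cref{thm:cheb2} depends on the domain only through this length raised to the power $q$, replacing $D^2$ by $\delta^2$ turns $D^{2q}$ into $\delta^{2q}$ and yields $|\epsilon_n| \le \frac{2V_q\delta^{2q}}{\pi q[2(n-q)]^q}$ on the conditioning event. Combining the two steps gives the claim.

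The main obstacle is the constant-tracking in this second step: one must verify that the finite-smoothness truncation estimate depends on the domain exactly through $(\text{length})^q$ with the same prefactor $\frac{2V_q}{\pi q[2(n-q)]^q}$, so that the substitution $D \to \delta$ produces precisely $\delta^{2q}$ with no stray factors, and that recentering leaves both the degree count and the total-variation bound intact. I would also keep the hypothesis $n > q$ inherited from \Cref{thm:cheb2}, which is what makes the Fourier-remainder estimate valid. Apart from this bookkeeping, the argument is the finite-smoothness analogue of \Cref{thm:cheb1_prob}, with the Bernstein-ellipse convergence factor of the analytic case replaced by the algebraic $O(n^{-q})$ remainder and with $D$ replaced by $\delta$.
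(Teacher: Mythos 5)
Your concentration step is exactly the paper's: Bernstein's inequality applied to $z_i=(x_i-y_i)^2-{\bf E}[(x_i-y_i)^2]$ with per-term bound $4D^2/d$ reproduces \cref{eq:probability} and confines $\norm{\vecx-\vecy}_2^2$ to $[E_d^2-\delta^2,\,E_d^2+\delta^2]$; the paper likewise just reuses the proof of \Cref{thm:cheb1_prob} verbatim for this half. The overall plan --- recenter by $E_d^2$, rerun the deterministic finite-smoothness construction of \Cref{thm:cheb2} on the short interval, keep the rank count $\binom{n+d+2}{d+2}$, and let only the ``length to the power $q$'' factor change --- is also the paper's plan.

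However, you have misidentified the deterministic machinery you propose to rerun, and run literally your step would fail to produce the stated bound. \Cref{thm:cheb2} is \emph{not} proved by ``expand $f$ in a Fourier series, Taylor-expand each Fourier mode''; that is the construction behind \Cref{thm:fourier_taylor}, which yields a rank of the form $4M_f\binom{M_t+d}{d}$ and a two-term error $|\epsilon_F|+\sum_j|a_j||\epsilon_T(j)|$ depending on a second truncation parameter $M_f$ --- this cannot be massaged into the single-parameter bound $\frac{2V_q\delta^{2q}}{\pi q[2(n-q)]^q}$, and the statement contains no $M_f$. What the paper actually uses (and what the algebraic $(n-q)^{-q}$ rate forces) is a \emph{Chebyshev truncation}: by Theorem 7.2 of \cite{Trefethen2012}, if the $q$-th derivative of the rescaled function has total variation $V$ on $[-1,1]$, the degree-$n$ Chebyshev truncation has $L_\infty$ error at most $\frac{2V}{\pi q(n-q)^q}$ for $n>q$; the affine map of the short interval onto $[-1,1]$ multiplies the variation of the $q$-th derivative by $(\mathrm{length}/2)^q$, which is where $\delta^{2q}$ replaces $D^{2q}$, and the separable form then comes from the \emph{exact} multinomial expansion of $\norm{\vecx-\vecy}^{2k}$ as in \Cref{thm:cheb1}, not from a Taylor expansion. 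Note also that the ``constant-tracking'' you flagged but left unresolved is real: the conditioning interval has length $2\delta^2$, so the rescaled variation bound is $(\delta^2)^qV_q$ and the clean conclusion is $\frac{2V_q\delta^{2q}}{\pi q(n-q)^q}$, a factor $2^q$ looser than the stated $[2(n-q)]^q$ --- a discrepancy the paper itself glosses over with its informal substitution $D\mapsto\delta$. Replace your Fourier-plus-Taylor mechanism with the Chebyshev truncation plus exact expansion of $\norm{\vecx-\vecy}^{2k}$ and your argument closes, coinciding with the paper's proof.
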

The proof of \Cref{thm:cheb2_prob} can be found in \SeCref{pf:theorem_2_4_2_5}.

Up to now, we have only considered a single parameter $D$ that characterizes the domain, to make the error bound more informative as in response to subtler changes of the domain, we also consider the diameters of the target domain $D_\vecx$ and of the source domain $D_\vecy$.
The following theorem nicely quantifies the influences of $D_\vecx$ and $D_\vecy$ on the error. Our result theoretically offers critical insights and motivations for many algorithms that take advantage of the low-rank property of sub-matrices, where these sub-matrices usually relate to data clusters of small diameters. 

\begin{theorem}
    \label{thm:fourier_taylor}
     Suppose the \Cref{assumption:function_and_domain} hold, and there are $D_\vecx < D$ and $D_\vecy < D$, such that $\|\vecx_i-\vecx_j\|_2\le D_\vecx$ and $\|\vecy_i-\vecy_j\|_2\le D_\vecy$.

    Let $f_p(x) = \sum_{n} \mathcal{T} \circ f(x+4nD^2)$ be a $4D^2$-periodic extension of $f(x)$, where $\mathcal{T}(\cdot)$\footnote{see details in \cite{Boyd2002}} is 1 on $[-D^2, D^2]$ and smoothly decays to 0 outside of the interval. We assume that $f_p$ and its derivatives through $f_p^{(q-1)}$ are continuous, and the $q$-th derivative is piecewise continuous with the total variation over one period bounded by $V_q$.

    Then, for $M_f, M_t > 0$ with $9M_f \le M_t$, the kernel $\calK$ can be approximated by a low-rank kernel $\widetilde{\calK}$ of rank at most $R(M_f, M_t, d) = 4M_f \binom{M_t+d}{d}$
        \begin{equation*}
            \calK(\vecx,\vecy) = \sum_{i=1}^{R} g_i(\vecx)h_i(\vecy)
            + \epsilon_{M_f, M_t} = \widetilde{\calK}(\vecx,\vecy) + \epsilon_{M_f, M_t}
        \end{equation*}
    And the error $\epsilon_{M_f, M_t} = \epsilon_{M_f, M_t}(D_\vecx, D_\vecy, q, \rho)$ is bounded by 
    $$
    |\epsilon_{M_f, M_t}(D_\vecx, D_\vecy, q, \rho)| \le \norm{f}_\infty\left(\frac{D_\vecx D_\vecy}{D^2}\right)^{M_t+1} +\frac{V_q}{\pi q} \left(\frac{2D^2}{\pi M_f}\right)^q
    $$
\end{theorem}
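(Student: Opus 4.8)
The plan is to realize the two summands of the error bound as the truncation errors of two successive expansions, a Fourier expansion in the scalar argument $z = \norm{\vecx-\vecy}_2^2$ followed by a Taylor expansion of the resulting cross term. First I would work directly with the smooth $4D^2$-periodic extension $f_p$ and write its Fourier series $f_p(z) = \sum_k c_k e^{\imath \omega_k z}$ with fundamental frequency $\omega_k = \pi k/(2D^2)$. Since $\norm{\vecx-\vecy}_2^2 \in [0,D^2]$ lies in the region where $\mathcal{T}\circ f$ equals $f$, we have $\calK(\vecx,\vecy) = \sum_k c_k e^{\imath \omega_k z}$ exactly, so truncating to $\abs{k}\le M_f$ introduces only the Fourier tail. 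Using that $f_p$ has $q-1$ continuous derivatives and a $q$-th derivative of total variation $\le V_q$ over one period, repeated integration by parts (the last step Riemann--Stieltjes) gives $\abs{c_k}\le \frac{(2D^2/\pi)^q V_q}{2\pi\abs{k}^{q+1}}$, whose tail over $\abs{k}>M_f$ is bounded by $\frac{V_q}{\pi q}\bigl(\frac{2D^2}{\pi M_f}\bigr)^q$, the second term of the claimed bound.

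Second, to separate each retained mode I would center the two domains: write $\vecx = \vecx_0 + \mathbf{u}$ and $\vecy = \vecy_0 + \mathbf{v}$ with $\norm{\mathbf{u}}_2 \le D_\vecx$ and $\norm{\mathbf{v}}_2 \le D_\vecy$, which is where the diameter hypotheses enter. Expanding $z = \norm{(\vecx_0-\vecy_0)+(\mathbf{u}-\mathbf{v})}_2^2$ splits the exponent $\imath\omega_k z$ into a pure-$\mathbf{u}$ piece, a pure-$\mathbf{v}$ piece, and the single cross term $-2\imath\omega_k\,\mathbf{u}^\top\mathbf{v}$, so that $e^{\imath\omega_k z}$ factors as (function of $\vecx$)$\times$(function of $\vecy$)$\times e^{-2\imath\omega_k \mathbf{u}^\top\mathbf{v}}$. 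Taylor-expanding the cross factor to total degree $M_t$ and using the multinomial identity $(\mathbf{u}^\top\mathbf{v})^m = \sum_{\abs{\vecalpha}=m}\binom{m}{\vecalpha}\mathbf{u}^{\vecalpha}\mathbf{v}^{\vecalpha}$ turns each mode into $\binom{M_t+d}{d}$ separable products $\mathbf{u}^{\vecalpha}\mathbf{v}^{\vecalpha}$. Collecting the conjugate modes $\pm k$ and taking real and imaginary parts to obtain real $g_i, h_i$ at most doubles the count, producing the stated rank $4M_f\binom{M_t+d}{d}$.

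The remaining, and hardest, step is the Taylor truncation error. Because the cross argument $-2\omega_k\,\mathbf{u}^\top\mathbf{v}$ is real, the degree-$M_t$ remainder of the exponential carries no growing prefactor and is bounded by $\frac{\abs{2\omega_k\mathbf{u}^\top\mathbf{v}}^{M_t+1}}{(M_t+1)!}\le \frac{(\pi\abs{k}\,D_\vecx D_\vecy/D^2)^{M_t+1}}{(M_t+1)!}$. Summing the per-mode remainders against $\abs{c_k}\le\norm{f}_\infty$ over $\abs{k}\le M_f$ should collapse to $\norm{f}_\infty\bigl(\frac{D_\vecx D_\vecy}{D^2}\bigr)^{M_t+1}$, the first term. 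The difficulty is that a naive estimate picks up both a factor $\abs{k}^{M_t+1}$ and the $O(M_f)$ number of modes, each growing with $M_f$, so one must exploit the coupling hypothesis $9M_f\le M_t$: by Stirling, $\frac{(\pi M_f)^{M_t+1}}{(M_t+1)!}\le\bigl(\frac{\pi e M_f}{M_t+1}\bigr)^{M_t+1}\le\bigl(\frac{\pi e}{9}\bigr)^{M_t+1}$, and since $\pi e/9<1$ this factor is at most $1$ and in fact decays geometrically, fast enough to absorb the linear-in-$M_f$ number of terms. Adding the two truncation errors then yields the claimed bound. Thus the crux is precisely this balancing of Fourier frequency against Taylor order through $9M_f\le M_t$; the coefficient-decay estimate for $c_k$ and the multinomial rank count are routine by comparison.
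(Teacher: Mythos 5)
Your route is the same as the paper's: Fourier-expand the periodic extension in the scalar $z=\norm{\vecx-\vecy}_2^2$, factor each retained mode into a pure-$\vecx$ piece, a pure-$\vecy$ piece, and a cross factor $e^{-2\imath\omega_k \mathbf{u}^\top\mathbf{v}}$, Taylor-expand the cross factor to total degree $M_t$ with the multinomial identity, count $\binom{M_t+d}{d}$ products per mode, and pay a factor $2$ to pass from complex to real factors (the paper does this via its Lemma~\ref{lem:complex-to-real-app}). Your hand-derived coefficient decay $\abs{c_k}\le \frac{(2D^2/\pi)^q V_q}{2\pi \abs{k}^{q+1}}$ and the resulting tail bound $\frac{V_q}{\pi q}\bigl(\frac{2D^2}{\pi M_f}\bigr)^q$ exactly reproduce the estimate the paper imports from Giardina's theorem, so that part is a self-contained improvement rather than a deviation.

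The one genuine problem is in the absorption step you correctly identify as the crux, and it traces to anchoring instead of centering. You take $\norm{\mathbf{u}}_2\le D_\vecx$ and $\norm{\mathbf{v}}_2\le D_\vecy$, so the per-mode argument is $\pi\abs{k}D_\vecx D_\vecy/D^2$ and your Stirling estimate gives base $\pi e/9\approx 0.949$. That is below $1$ but decays too slowly to absorb the $2M_f$ modes at the smallest admissible parameters: with $M_f=1$, $M_t=9$ one gets $2(\pi e/9)^{10}\approx 1.18$, and with $M_f=2$, $M_t=18$ about $4(\pi e/9)^{19}\approx 1.48$, so your argument only yields roughly $1.5\,\norm{f}_\infty\bigl(\frac{D_\vecx D_\vecy}{D^2}\bigr)^{M_t+1}$ for the Taylor part, not the claimed constant $1$. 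The fix is exactly the paper's move: expand about the domain centers, writing $\vecrho_\vecx=\vecx-\vecx_c$, $\vecrho_\vecy=\vecy-\vecy_c$ with $\norm{\vecrho_\vecx}_2\le D_\vecx/2$ and $\norm{\vecrho_\vecy}_2\le D_\vecy/2$ (even the Jung radius $D/\sqrt{2}$ for arbitrary sets suffices), which inserts an extra factor $4^{-(M_t+1)}$, drops the base to $\pi e/36\approx 0.24$, and makes $2M_f(\pi e/36)^{9M_f+1}\le 1$ with enormous room. A smaller looseness you share with the paper: with modes $\abs{k}\le M_f$ there are $2M_f+1$ of them, so the count is $4M_f\binom{M_t+d}{d}$ only up to an additive constant unless conjugate modes are paired carefully; since the paper glosses this identically, I would not count it against you.
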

The proof of \Cref{thm:fourier_taylor} can be found in \SeCref{pf:theorem_2_6}.

In contrast to the previous theorems where the domain information only enters the error as $D$, in \Cref{thm:fourier_taylor}, the diameters of the source domain $D_\vecy$ and the target domain $D_\vecx$ also appear in error. The form $\left(\frac{D_\vecx D_\vecy}{D^2}\right)^{M_t+1}$ suggests that a decrease in $\frac{D_\vecx D_\vecy}{D^2}$ will reduce the error, which can be achieved when either the source or the target domain has a smaller diameter. 
This property has motivated people to approach matrix approximation problems by identifying low-rank blocks in a matrix, which is partially achieved by partitioning the data into clusters of small diameters.

The \frank{} still remains a polynomial growth and it grows as $R = O(d^{M_t})$, when $M_f$ and $M_t$ are fixed and
$d \rightarrow \infty$. $M_f$ represents the Fourier expansion order of
$f$, and each term in the expansion is further expanded into Taylor
terms up to order $M_t$. We assumed $M_t$ to be the same across all the
Fourier terms for simplicity. If we decrease the Taylor order $M_t$
with increasing Fourier order to preserve more information of low-order Fourier terms, then a lower error bound can
be attained for the same \frank{}.

{\bf Remark.}
We summarize the assumptions, error bounds and \frank{}s of the 
theorems in \Cref{tab:methods_comparison}, and discuss the similarities and
differences in the \frank{} and the error bound. We
refer to \Cref{thm:cheb1} and \Cref{thm:cheb2} as the Chebyshev approach and 
\Cref{thm:fourier_taylor} as the Fourier-Taylor approach based on their
proof techniques. The \frank{} is determined by the data dimension and the expansion order, and it is a power of the dimension, where the power is the expansion order and is different in the Chebyshev approach and the Fourier-Taylor approach. The error bounds quantify the influences from the expansion order and the domain diameter: a higher expansion order reduces the error bound, so does a smaller domain diameter. The domain diameter occurs as a single parameter $D$ in the Chebyshev approach but as $D_\vecx$, $D_\vecy$ and $D$ in the Fourier-Taylor approach. 

From the practical viewpoint, the absence of exponential growth for the \frank{} agrees with the practical situation where people observe lower \mrank{}s for high dimension data. And, the fact that decreasing $D_\vecx$ or $D_\vecy$ reduces the error is also in agreement with practice and moreover, it provides an insight of why point clusterings followed by local interpolations often leads to a more memory efficient approximation.

\begin{table}[htbp]
    \scriptsize
    \caption{Theorem Summary}
    \centering
    \begin{adjustbox}{angle=90}

    \begin{tabular}{ p{14mm}| p{30mm}|p{30mm}|p{30mm}|p{30mm}|   p{42mm} }
        \toprule
        \multirow{ 2}{*}{{\bf Approach}}  & \multicolumn{4}{|c|}{\begin{tabular}{@{}c@{}}Chebyshev expansion \\ + Exact expansion of $\norm{\vecx - \vecy}^{2l}$\end{tabular}}  & \begin{tabular}{@{}c@{}} Fourier expansion \\ + Taylor expansion of $\exp{(\imath\vecx^T\vecy)}$ \end{tabular}\\
        \cmidrule(){2-6}
        &Deterministic (analytic) & Probabilistic (analytic)& Deterministic (finite smoothness)& Probabilistic (finite smoothness) & Deterministic (finite smoothness)\\
            \midrule
            {\bf Condition}
            &
            \begin{tabitemize}
                \item \Cref{assumption:function_and_domain}
                \item \Cref{assumption:analytic}
            \end{tabitemize}
            &
            \begin{tabitemize}
             \item \Cref{assumption:function_and_domain}
            \item \Cref{assumption:analytic}
            \item $\tilde{f}(x- E_d^2) = f(x)$ with parameter of its transformed Bernstein ellipse to be $\tilde{\rho}_D^2 > 1$
             \item \Cref{assumption:data_distribution}
            \end{tabitemize}
            &
            \begin{tabitemize}
                \item \Cref{assumption:function_and_domain}
                \item \Cref{assumption:finite_smooth}
            \end{tabitemize}
            &
            \begin{tabitemize}
                \item \Cref{assumption:function_and_domain}
                \item \Cref{assumption:finite_smooth}
                \item \Cref{assumption:data_distribution}
            \end{tabitemize}
            &
            \begin{tabitemize}
            	\item \Cref{assumption:function_and_domain}
                \item $f_p(x)$ is the $4D^2$-periodic extension of $f$ and $\|f(x) - f_p(x)\|_{\infty, [-D^2, D^2]} = 0$
                \item The first $q-1$ derivatives of $f_p$ are continuous, and the $q$-th derivative on $[-D^2,D^2]$ is piece-wise continuous with bounded total variation $V_q$
                \item $9M_f \le M_t$
            \end{tabitemize}
        \\
        \midrule
        {\bf Error} & {$\dfrac{2C_D\rho_D^{-2n}}{\rho_D^2- 1} $}  & $\dfrac{2C_D\delta^2}{D^2\Big(\tilde{\rho}_D^2-\tilde{\rho}_D^{-2}\Big) - \delta^2} \times$ \newline $\left(\dfrac{D^2 \Big(\tilde{\rho}_D^2 - \tilde{\rho}_D^{-2}\Big)}{\delta^2}\right)^{-n}$ with probability at least  $Pr(\delta, D, \sigma_d, d)$ for $0 < \delta < D$
		&
            {$\dfrac{2V_qD^{2q}}{\pi q [2(n- q)]^q} $} & $\dfrac{2V_q\delta^{2q}}{\pi q [2(n- q)]^q}$ with probability at least $Pr(\delta, D, \sigma_d, d)$ for $0 < \delta < D$
        & $    \norm{f}_\infty\left(\dfrac{D_\vecx D_\vecy}{D^2}\right)^{M_t+1} +\dfrac{V_q}{\pi q} \left(\dfrac{2D^2}{\pi M_f}\right)^q $  \\
        \midrule
        {\bf Rank} & \multicolumn{4}{|c|}{$ \binom{n+d+2}{d+2}$}&
        $4M_f \binom{M_t+d}{d} $\\
        \midrule
        {\bf Notation}
            &
            \multicolumn{4}{|p{200pt}|}{
           $n$: Chebyshev expansion order \newline
           $D$: $\norm{\vecx-\vecy}_2 < D$ \newline
           $E_d^2 = \sum_{i=1}^d {\bf E}[(x_i-y_i)^2]$ \newline
           $\sigma_d^2 = \sum_{i = 1}^d {\bf Var}[(x_i - y_i)^2]$ \newline
           $Pr(\delta, D, \sigma_d, d) = 1- 2\exp\left(\dfrac{-\delta^4 \, d}{2\sigma_d^2 \, d + 8D^2 \delta^2 / 3}\right)$}
           &
           $D_\vecx$: $\norm{\vecx_i-\vecx_j}_2 < D_\vecx$ \newline
           $D_\vecy$: $\norm{\vecy_i-\vecy_j}_2 < D_\vecy$ \newline
           $M_f$: Fourier expansion order\newline
           $M_t$: Taylor expansion order\\
        \bottomrule
    \end{tabular}
    \end{adjustbox}
    \label{tab:methods_comparison}
\end{table}

   \section{Theorem Proofs}
   \label{sec:thm_proofs}
    In this section, we prove the theorems in \Cref{sec:theorem}. All the proofs consist of three components: separating $\calK(\vecx, \vecy)$ into a finite sum of products of real-valued functions $h_i(\vecx)g_i(\vecy)$, counting the terms to obtain an upper bound for the \frank{}, and calculating the error bound. Similar techniques can be found in \cite{micchelli1984interpolation, schaback2008limit, Wathen2015, zwicknagl2009power}. We describe the high-level procedure of the separation step; the rest steps should be straightforward. 

    In the proofs of \Cref{thm:cheb1} and \Cref{thm:cheb2}, the separable form was obtained by first expanding the kernel into polynomials of $z = \norm{\vecx-\vecy}^2$ of a certain order to settle the error bound, and then expanding the terms $\norm{\vecx-\vecy}^{2l}$. The key advantage of this approach has been discussed at the beginning of \Cref{sec:theorem}. We seek approximation theorems in 1D that provide optimal convergence rate and explicit error bounds. Chebyshev theorems (Theorem 8.2 and Theorem 7.2 in \cite{Trefethen2012}) are ideal choices. Analogous results also exist, \emph{e.g.}, the classic Bernstein and Jackson's approximation theorems (page 257 in \cite{bernstein1912recherches}), but the downside is that they only provide an error rate rather than an explicit formula, and moreover, they will not improve our results or simplify the proofs.

    In the proof of \Cref{thm:fourier_taylor}, the separable form was obtained by first applying a Fourier expansion on $\calK$ to separate the cross term $\exp(\vecx^T\vecy)$, then applying a Taylor expansion on the cross term.
    
	Before stating the detailed proofs, we introduce some notations that will be used. \\
    \textbf{Notations.}
    For multi-index $\vecalpha = [\alpha_1, \cdots, \alpha_d] \in \mathbb{N}^d$ and vector $\vecx = [x_1, \cdots, x_d] \in \mathbb{R}^d$, we define
    $
    |{\vecalpha}| = \alpha_1 + \alpha_2 + \cdots + \alpha_d
    $,
    $
    \vecx^{\vecalpha} = x_1^{\alpha_1}x_2^{\alpha_2}\cdots x_d^{\alpha_d}
    $
    and the multinomial coefficient with $|\vecalpha| = m$ to be
    $
    \binom{m}{\vecalpha} = \frac{m!}{\alpha_1!\alpha_2!\cdots\alpha_d!}.
    $

\subsection{Proof of Theorem 2.1}
    \label{pf:theorem2_1}
	We first introduce a lemma on the identity of binomial coefficients.
    \begin{lemma}
    \label{lem:binomial_identity}
    For $d \in \mathbb{Z}^{+}$ and $m \in \mathbb{Z}$, the following identity holds:
    \begin{equation*}
    	\sum_{k=0}^m \binom{k+d}{d} = \binom{m+1+d}{d+1}
    \end{equation*}
    \end{lemma}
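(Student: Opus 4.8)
The plan is to recognize this as the classical hockey-stick identity and prove it by a telescoping argument built on Pascal's rule, which is cleaner than carrying out induction bookkeeping. The starting point is the rearrangement of Pascal's rule $\binom{k+1+d}{d+1} = \binom{k+d}{d+1} + \binom{k+d}{d}$, which expresses each summand as a difference of consecutive terms:
$$\binom{k+d}{d} = \binom{k+1+d}{d+1} - \binom{k+d}{d+1}.$$

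First I would substitute this identity into the left-hand sum. Summing over $k$ from $0$ to $m$, the right-hand side telescopes, so that all intermediate terms cancel and only the boundary terms $\binom{m+1+d}{d+1} - \binom{d}{d+1}$ survive. Since $\binom{d}{d+1} = 0$ (the lower index exceeds the upper), the expression collapses to $\binom{m+1+d}{d+1}$, which is exactly the claimed right-hand side.

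As an alternative route, or as a sanity check, I would also confirm the identity by induction on $m$. The base case $m=0$ reduces both sides to $\binom{d}{d} = \binom{d+1}{d+1} = 1$. For the inductive step, I would add $\binom{m+1+d}{d}$ to the assumed value $\binom{m+1+d}{d+1}$ of the sum up to $m$, and apply Pascal's rule once more to obtain $\binom{m+2+d}{d+1}$, which matches the formula with $m$ replaced by $m+1$.

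There is no substantive obstacle here: the identity is classical and the telescoping is immediate once Pascal's rule is written in the difference form above. The only points requiring care are the boundary convention $\binom{d}{d+1}=0$ and, if one reads the statement literally for all $m \in \mathbb{Z}$, noting that a negative $m$ makes the left-hand sum empty while the right-hand side vanishes under the same convention; for the intended application $m \ge 0$ these edge cases do not arise.
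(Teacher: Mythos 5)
Your proof is correct, and it takes a different route from the paper, which gives no argument of its own: it simply states that the identity ``can be done by induction'' following Lemma 2.4 of Wendland's \emph{Scattered Data Approximation}. Your primary argument --- rewriting Pascal's rule in difference form, $\binom{k+d}{d} = \binom{k+1+d}{d+1} - \binom{k+d}{d+1}$, and telescoping the sum down to $\binom{m+1+d}{d+1} - \binom{d}{d+1}$ with the boundary term vanishing --- is a self-contained one-line proof that makes the cancellation structure explicit rather than hiding it in an inductive step; your secondary induction sketch is essentially the proof the paper delegates to the reference, so you have covered both. One small caution on your closing remark about $m \in \mathbb{Z}$: the claim that the right-hand side vanishes for negative $m$ holds under the combinatorial convention that $\binom{n}{k} = 0$ whenever $n < k$, including negative $n$, but under the generalized (polynomial) definition of binomial coefficients a negative upper index can give nonzero values (e.g.\ $\binom{-1}{d+1} = (-1)^{d+1}$, which arises when $m = -d-2$), so the identity as literally stated for all $m \in \mathbb{Z}$ depends on that convention; as you note, only $m \ge 0$ is ever used in the paper, so this does not affect the application.
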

    \begin{proof}
    The proof can be done by induction and follows that from Lemma 2.4 in \cite{wendland2004scattered}.
    \end{proof}

\begin{proof}
    The proof consists of two components. First, we map the domain of $f$ to $[0, 2]$ (for the convenience of the proof) and approximate $f$ with a Chebyshev polynomial, and this settles the error. Second, we further separate terms $\norm{\vecx-\vecy}^2$ in the polynomial and count the number of distinct terms to be an upper bound of the \frank{}.

    \begin{itemize}[leftmargin=*]

        \item[] \textit{Approximation
by Chebyshev polynomials.}
        We first linearly map the domain of $f$ to $[0,2]$ and denote the new function as $\tilde{f}$:
            \begin{equation}
                \label{eq:linear_transform}
                \calK(\vecx, \vecy) =  f(\norm{\vecx-\vecy}_2^2) = \tilde{f}\left(\frac{2}{D^2} \norm{\vecx-\vecy}_2^2\right) = \tilde{f}(z)
            \end{equation}
            Because $\norm{\vecx - \vecy}^2 \in [0, D^2]$, it follows that $z \in [0, 2]$. From our assumptions, $\tilde{f}$ is analytic in $[0, 2]$ and is analytically continuable to the open Bernstein ellipse  with parameter ${\rho_D^2}$ (consider a shifted ellipse).

            According to Theorem 8.2 in \cite{Trefethen2012} that follows from \cite{little1984eigenvalues}, for $n \ge 0$, we can approximate $\tilde f$ by its Chebyshev truncations $\tilde{f}_n$ in the $L_\infty$ norm with error
            \begin{align}
                |\epsilon_n| \le \frac{2C_D \rho_D^{-2n}}{ \rho_D^2 - 1} 
            \end{align}
            and
            \begin{equation}
                \label{eq:chebyshev_polynomial}
                \begin{split}
                    {\tilde f}_n(z) =  \sum_{k=0}^{n} c_k T_k(z) + \epsilon_n
                \end{split}
            \end{equation}
            where $c_k = \frac{2}{\pi} \bigintsss_{-1}^1 \frac{{\tilde f}(z)T_k(z)}{\sqrt{1-z^2}}dz$, and
            $T_k(z)$ is the Chebyshev polynomial of the first kind of degree $k$ defined by the relation:
            \begin{align}
                T_k(x) = \cos(k\theta), ~\text{with}~x = \cos(\theta).
            \end{align}

            Rearranging the terms in \cref{eq:chebyshev_polynomial} we obtain a polynomial of $z = \norm{\vecx-\vecy}^2$:
            \begin{align}
                \label{eq:polynomial_k}
                \calK(\vecx, \vecy) = {\tilde f}\left(\frac{2}{D^2} \norm{\vecx-\vecy}_2^2\right) = \sum_{k=0}^n\frac{a_k}{D^{2k}}\norm{\vecx-\vecy}^{2k}+ \epsilon_n
            \end{align}
            where $a_k$ depends on $c_k$ but is independent of $\vecx$ and $\vecy$.

        \item[] \textit{Separable form.}
            We separate each term $\norm{\vecx - \vecy}^{2l}$ in \cref{eq:polynomial_k} into a finite sum of separate products:
            \begin{equation}
                \label{eq:distance_expansion}
                \begin{split}
                    \|\vecx - \vecy\|^{2l} &= \sum_{k=0}^{l} {l \choose k}(-2)^{l-k}   \sum_{j=0}^k {k \choose j} \norm{\vecx}^{2j} \norm{\vecy}^{2(k-j)} \left(\sum_{i=1}^d x_i y_i\right)^{l-k} \\
                    &= \sum_{k=0}^{l} \sum_{j=0}^k\sum_{|\vecalpha| = l-k} C_{l, k, \vecalpha} \left( \norm{\vecx}^{2j}  \vecx^{\vecalpha} \right) \left(\norm{\vecy}^{2(k-j)}\vecy^{\vecalpha} \right)
                \end{split}
            \end{equation}
            where $ C_{l, k, \vecalpha} = (-2)^{l-k}  \binom{l}{k} \binom{k}{j} \binom{l-k}{\vecalpha}$.
            Substituting \cref{eq:distance_expansion} into \cref{eq:polynomial_k}, we obtain a separable form of $\calK$:
            \begin{equation}
                \label{eq:kernel_seperate_form}
                \calK(\vecx, \vecy) =  \sum_{l=0}^n\sum_{k=0}^{l}\sum_{j=0}^k \sum_{|\vecalpha| = l-k} D_{l, k, \vecalpha} \left( \norm{\vecx}^{2j}  \vecx^{\vecalpha} \right) \left(\norm{\vecy}^{2(k-j)}\vecy^{\vecalpha} \right)
                + \epsilon_n 
            \end{equation}
            where $D_{l, k, \vecalpha} = \frac{a_l}{D^{2l}}(-2)^{l-k}  \binom{l}{k} \binom{k}{j} \binom{l-k}{\vecalpha}$ is a constant independent of $\vecx$ and $\vecy$.
            Therefore, the \frank{} of $\calK$ can be upper bounded by the total number of separate terms:
            \begin{align*}
                \sum_{l=0}^n \sum_{k=0}^{l} (k+1)\binom{l-k + d-1}{d - 1}
                                = \binom{n+d+2}{d+2}
            \end{align*}
where the equality follows from the result in \Cref{lem:binomial_identity}.
            To summarize, we have proved that $\calK(\vecx, \vecy)$ can be approximated by the separable form in \cref{eq:kernel_seperate_form} in the $L_\infty$ norm with rank at most
            \begin{equation}
                \label{eq:rank_cheb}
                R(n, d) = \binom{n+d+2}{d+2}
            \end{equation}
        and approximation error
        \begin{equation}
            |\epsilon_n(D)| \le  \frac{2C_D \rho_D^{-2n}}{\rho_D^2- 1}
        \end{equation}
    \end{itemize}
\end{proof}

\subsection{Proof of Corollary 2.2}
\label{pf:Corollary2_2}
\begin{proof}
For a fixed kernel function and fixed $n$, we define two constants $c_1 = \frac{\rho_D^2- 1}{2C_D}$ and $c_2 = \log \rho_D^2$.
Then, the truncation error $\epsilon$ can be rewritten as
\[
\epsilon = \frac{2C_D \rho_D^{-2n}}{\rho_D^2- 1}  = \frac{e^{- n c_2}}{c_1}\] 
and equivalently,
\begin{equation}
    n = \frac{-\log c_1\epsilon}{c_2}
\end{equation}
We relate \frank{} $R$ to error $\epsilon$ and dimension $d$. When $d \geq n+2$, we obtain,
\begin{equation}
    \begin{split}
        \label{eq:rank_d_error}
        R &=  \binom{n+d+2}{d+2}
        \le \frac{2^n d^n}{n!}
        = c_n  d^\frac{-\log c_1 \epsilon}{c_2} \\
    \end{split}
\end{equation}
where $c_n = \frac{2^n}{n!}$ is a constant for a fixed $n$. Therefore, an $\epsilon$ error is achievable with the function rank $R$ proportional to $d^{\frac{-\log c_1\epsilon}{c_2}}$.
\end{proof}

\subsection{Proof of Theorem 2.3}
\label{pf:theorem2_3}
\begin{proof}
	We consider the concentration of measure phenomenon and apply concentration inequalities to obtain a probabilistic error bound. The proof mostly follows the proof of Theorem 2.1, and we will focus on computing the error bound for a smaller domain. 
    
 	To simplify the proof, we consider a function $\tilde{f}$ that is shifted by $E_d^2$ such that 
    \begin{equation*}
    	f(\norm{\vecx-\vecy}_2^2) = {\tilde f}(\norm{\vecx-\vecy}_2^2 - E_d^2)
    \end{equation*}
    and we will see later that this shift ensures the inputs for $\tilde{f}$ to fall into an interval that centers around 0 with some probability. 
    $\tilde{f}$ inherits the analyticity of $f$; therefore, it is analytic on $[-E_d^2, D^2 - E_d^2]$, and can be analytically extended to a transformed Bernstein Ellipse with parameter $\tilde{\rho}_D^2$. 
    
    Let us denote $z_i = (x_i-y_i)^2 - {\bf E}[(x_i-y_i)^2]$ and we will shortly apply concentration inequality to $z_i$. With the assumptions that $x_i\sqrt{d}$ and $y_i\sqrt{d}$ are i.i.d.\ random variables where $|x_i\sqrt{d}| < D$ and $|y_i\sqrt{d}| < D$, it follows that the $z_i$ are statistically independent with mean zero and are bounded by $\frac{4D^2}{d}$. By applying Bernstein's inequality \cite{bernsteinprobability} on the sum of the $z_i$, we conclude that for $\delta \ge 0$,
    \begin{equation}
    \label{eq:concentration_prob}
        P\left(| \norm{\vecx - \vecy}_2^2  -E_d^2| \le \delta^2\right)   \ge  1- 2\exp\left(\frac{-\delta^4 \, d}{2\sigma_d^2 \, d + 8D^2 \delta^2 / 3}\right)
    \end{equation}
        where $E_d^2 = \sum_{i=1}^d {\bf E}[(x_i-y_i)^2]$ is a constant. In other words, $\norm{\vecx - \vecy}_2^2 \in [E_d^2 - \delta^2, E_d^2 + \delta^2]$ with probability at least
        $$1- 2\exp\left(\frac{-\delta^4 \, d}{2\sigma_d^2 \, d + 8D^2 \delta^2 / 3}\right)$$
        This also means that with the same probability in \cref{eq:concentration_prob}, the inputs for $\tilde{f}$ will fall into the interval $[-\delta^2, \delta^2]$. 
        
        Therefore, for a probability associated with $\delta$, we can turn to considering $\tilde{f}$ on the domain $[-\delta^2, \delta^2]$. We assume that $\tilde{f}$ is analytically extended to a transformed Bernstein ellipse with parameter $\rho_\delta^2$, with the value of $\tilde{f}(z)$ inside the ellipse bounded by $C_\delta$. Following the same argument in the proof for \Cref{thm:cheb1}, we obtain that for $\delta > 0$ and with probability in \cref{eq:concentration_prob}, the approximation error for $\vecx$ and $\vecy$ sampled from the above distribution is bounded by
    \begin{equation}
    \label{eq:error_prob}
        |\epsilon_n| \le \frac{2C_\delta}{\rho_\delta^2- 1}\rho_\delta^{-2n}
    \end{equation}
    This sharper bound can be achieved with the same function rank as in \cref{eq:rank_cheb} and with the same low-rank representation as in \cref{eq:kernel_seperate_form} except for coefficients.  
    
    Next, we rewrite the upper bound in \cref{eq:error_prob} with the parameters $\tilde{\rho}_D$, $\tilde{C}_D$, and $\delta$. If we linearly map the domain of $\tilde{f}$ from $[-\delta^2, \delta^2]$ to [-1,1], then the Bernstein ellipse with parameter $\tilde{\rho}_D^2$ will be scaled by $\frac{1}{\delta^2}$. We seek the largest $\rho_\delta^2$ such that the Bernstein ellipse with parameter $\rho_\delta^2$ will be contained in the transformed Bernstein ellipse with parameter $\tilde{\rho}_D^2$. In that case, the lengths of their semi-minor axises match and the largest $\rho_\delta^2$ satisfies 
    \begin{equation}
    \rho_\delta^2 - \rho_\delta^{-2} = \frac{D^2}{\delta^2} \left(\tilde{\rho}_D^2 - \tilde{\rho}_D^{-2}\right)
    \end{equation}
    and we obtain $\rho_\delta^2 = \frac{D^2}{\delta^2} (\tilde{\rho}_D^2 - \tilde{\rho}_D^{-2}) + \left(\frac{D^4}{4\delta^4}(\tilde{\rho}_D^2 - \tilde{\rho}_D^{-2})^2+1\right)^{\frac{1}{2}}$. In the special case where $\delta^2 = D^2$, $\rho_\delta^2 = \tilde{\rho}_D^2$, we recover the error bound $\frac{2C_D}{\tilde{\rho}_D^2- 1}\tilde{\rho}_D^{-2n}$. To simplify the bound, we use the relation that $\rho_\delta^2 > \frac{D^2}{\delta^2}(\tilde{\rho}_D^2 - \tilde{\rho}_D^{-2})$. Substituting this into \cref{eq:error_prob}, along with the fact that $C_\delta \le C_D$, we obtain
    \begin{equation}
    \label{eq:error_prob_thm2_3}
        |\epsilon_n| \le \frac{2C_D\delta^2}{D^2(\tilde{\rho}_D^2-\tilde{\rho}_D^{-2}) - \delta^2}\left(\frac{D^2 (\tilde{\rho}_D^2 - \tilde{\rho}_D^{-2})}{\delta^2}\right)^{-n}
    \end{equation}
    
  Therefore, the function rank related to error $\epsilon_n$ remains $\binom{n+d+2}{d+2}$, and we have proved our result.
\end{proof}

\subsection{Proof of Theorem 2.4 and Theorem 2.5}
\label{pf:theorem_2_4_2_5}
\begin{proof}
    The proof follows the same steps as that in \Cref{thm:cheb1} and \Cref{thm:cheb1_prob};
    we only need to establish that the error term in the Chebyshev expansion is bounded by $\frac{2D^{2q}V_q}{\pi q [(n- q)]^q} $.
    Consider \cref{eq:linear_transform}.
    Because $f^{(q)}$ is piecewise continuous with its total variation on
    $[0, D^2]$ bounded by $V_q$, it follows that ${\tilde f}^{(q)}$ in \cref{eq:linear_transform} is piecewise continuous on $[0,2]$, with its total variation on $[0,2]$ bounded as follows
    \[
        V\left(\frac{\dif^q {\widetilde f}}{\dif x^q}\right) = V\left(\frac{D^{2q}}{2^q}\frac{\dif^q f}{\dif^q x^q}\right) = \frac{D^{2q}}{2^q}  V\left(\frac{\dif^q f}{\dif^q x^q}\right)  \le \frac{D^{2q}}{2^q} V_q
    \]
    Therefore, by Theorem 7.2 in \cite{Trefethen2012}, for $n > q$, the order-$n$ Chebyshev expansion ${\tilde f}_n$ approximates $\tilde f$ in the $L_\infty$ norm with error bounded by 
    \[
        |\epsilon_n| \le \frac{2V_q({\tilde f})}{\pi q (n- q)^q} \le \frac{2D^{2q}V_q}{\pi q (2(n- q))^q}
    \]
    The rest of the proof is identical to that of \Cref{thm:cheb1} for the deterministic result, and identical to that of \Cref{thm:cheb1_prob} for the probabilistic result.
\end{proof}

\subsection{Proof of Theorem 2.6}
\label{pf:theorem_2_6}
    We first introduce a lemma concerning the \frank{} of complex functions.
    \begin{lemma}
        \label{lem:complex-to-real-app}
        If a real-valued function $\calK$ can be approximated by
        two sequences of complex-valued functions,
        \textit{i.e.},
        $$
        \left|
        \calK(\vecx,\vecy) - \sum_{i=1}^{R_c}
        \Psi_i(\vecx)\Phi_i(\vecy)
        \right| \leq \epsilon,
        \quad \vecx\in\Omega_\vecx,\vecy\in\Omega_\vecy
        $$
        where $\{\Psi_i(\vecx)\}_{i=1}^{R_c}$
        and $\{\Phi_i(\vecy)\}_{i=1}^{R_c}$
        are complex-valued functions,
        then there exist two sequences of real-valued functions,
        $\{g_i(\vecx)\}_{i=1}^R$
        and $\{h_i(\vecy)\}_{i=1}^R$,
        such that for $R=2R_c$,
        $$
        \left|
        \calK(\vecx,\vecy) - \sum_{i=1}^R g_i(\vecx)h_i(\vecy)
        \right| \leq \epsilon,
        \quad \vecx\in\Omega_X,\vecy\in\Omega_Y
        $$
    \end{lemma}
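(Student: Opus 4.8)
The plan is to exploit the fact that $\calK$ is real-valued, so that passing to real parts discards no information while costing nothing in the error. Starting from the given complex approximation, I would apply $\Re(\cdot)$ to the quantity inside the modulus. Since $\calK(\vecx,\vecy)$ is real, we have $\real{\calK - \sum_i \Psi_i\Phi_i} = \calK - \real{\sum_i \Psi_i\Phi_i}$, and because taking a real part can only shrink the modulus, i.e. $\abs{\Re(w)} \le \abs{w}$, the bound $\epsilon$ is inherited verbatim:
$$
\abs{\calK(\vecx,\vecy) - \real{\textstyle\sum_{i=1}^{R_c}\Psi_i(\vecx)\Phi_i(\vecy)}}
\le \abs{\calK(\vecx,\vecy) - \textstyle\sum_{i=1}^{R_c}\Psi_i(\vecx)\Phi_i(\vecy)} \le \epsilon .
$$

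The second step is to rewrite $\real{\sum_i \Psi_i\Phi_i}$ as a genuine real separable sum. For each $i$, the elementary identity $\real{\Psi_i\Phi_i} = \real{\Psi_i}\,\real{\Phi_i} - \imag{\Psi_i}\,\imag{\Phi_i}$ splits one complex product into a difference of two products, each a real-valued function of $\vecx$ times a real-valued function of $\vecy$. Defining $g_i = \real{\Psi_i}$, $h_i = \real{\Phi_i}$ for $i = 1,\dots,R_c$, and $g_{R_c + i} = -\imag{\Psi_i}$, $h_{R_c+i} = \imag{\Phi_i}$ for $i = 1,\dots,R_c$, produces $R = 2R_c$ real-valued functions satisfying $\sum_{i=1}^{R} g_i(\vecx)h_i(\vecy) = \real{\sum_{i=1}^{R_c}\Psi_i(\vecx)\Phi_i(\vecy)}$; combined with the display above, this yields the claim.

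I do not expect a serious obstacle: the argument is essentially the observation that the target's being real lets us drop imaginary parts for free, together with the real–imaginary decomposition of a product. The only point needing care is the bookkeeping of the factor of two — verifying that each complex rank contributes exactly two real ranks and that the sign in $-\imag{\Psi_i}\,\imag{\Phi_i}$ is absorbed once (into $g_{R_c+i}$) rather than double-counted — and noting that the construction is valid pointwise on all of $\Omega_\vecx \times \Omega_\vecy$, so the uniform bound is preserved.
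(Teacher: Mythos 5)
Your proof is correct and follows essentially the same route as the paper's: both split each product via $\real{\Psi_i\Phi_i} = \real{\Psi_i}\real{\Phi_i} - \imag{\Psi_i}\imag{\Phi_i}$ and take $R = 2R_c$ real factors (you stack the indices $1,\dots,R_c$ and $R_c+1,\dots,2R_c$ where the paper interleaves them as $2i-1, 2i$, an immaterial difference). If anything, you are slightly more explicit than the paper, which asserts the inequality $\abs{\calK - \sum_i g_i h_i} \le \abs{\calK - \sum_i \Psi_i\Phi_i}$ without spelling out the justification $\abs{\real{w}} \le \abs{w}$ combined with the realness of $\calK$ that you supply.
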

    \begin{proof}
        Let $\real{\cdot}$ and $\imag{\cdot}$ denote the real and imaginary part
        of a complex value, respectively.
        For each term, $\Psi_i(\vecx)\Phi_i(\vecy)$,
        we rewrite it as
        \begin{equation}
            \begin{split}
                \Psi_i(\vecx)\Phi_i(\vecy)
                = & \left(
                \real{\Psi_i(\vecx)}\real{\Phi_i(\vecy)}
                - \imag{\Psi_i(\vecx)}\imag{\Phi_i(\vecy)}
                \right)\\
                &+ \imath
                \left(
                \real{\Psi_i(\vecx)}\imag{\Phi_i(\vecy)}
                + \imag{\Psi_i(\vecx)}\real{\Phi_i(\vecy)}
                \right)
            \end{split}
        \end{equation}

        We can then construct the sequences of real-valued functions as follows
        \begin{equation}\label{eq:lem-consturct}
            \left\{
                \begin{aligned}
                    &g_{2i-1}(\vecx) = \real{\Psi_i(\vecx)}, g_{2i}(\vecx) = -\imag{\Psi_i(\vecx)}\\
                    &h_{2i-1}(\vecy) = \real{\Phi_i(\vecy)}, h_{2i}(\vecy) = \imag{\Phi_i(\vecy)}
                \end{aligned}
            \right., \quad i=1,2,\ldots,R_c
        \end{equation}
        The approximation error holds for the real-valued approximation:
        \begin{equation}
            \begin{split}
                \left|
                \calK(\vecx,\vecy) - \sum_{i=1}^R g_i(\vecx)h_i(\vecy)
                \right|
                \leq
                \left|
                \calK(\vecx,\vecy) - \sum_{i=1}^{R_c}
                \Psi_i(\vecx)\Phi_i(\vecy)
                \right|
                \leq \epsilon
            \end{split}
        \end{equation}
    \end{proof}

    We now start the proof for \Cref{thm:fourier_taylor}.

    \begin{proof}
        The proof consists of three major parts: derivation of a separable form for $\calK(\vecx,\vecy)$, analysis on
        the truncation error, and estimation of the number of separable
        terms. The first part is proceeded in three steps: Fourier expansion of the
        periodic input function, Taylor expansion of each Fourier
        component, and finalization on the overall separable form.

        We denote by $\Omega_\vecx$ the domain of $\vecx$, and
        $\Omega_\vecy$ the domain of $\vecy$, with their centers to be
        $\vecx_c$ and $\vecy_c$, respectively. To simplify the notations, we use $f(\cdot)$ to represent the periodic function $f_p(\cdot)$.

        \begin{itemize}[leftmargin=*]

            \item[] \textit{Fourier expansion.}
                Let the Fourier expansion of $f$ with error term
                $\epsilon_F$ be
                \begin{equation} \label{eq:thm-fourier-expan1}
                    f(z) = \sum_{j=-M_f}^{M_f} a_j \exp(\imath \omega j
                    z) + \epsilon_F
                \end{equation}
                where $a_j = \frac{1}{4D^2}
                \int_{-2D^2}^{2D^2}f(z)\exp(-\imath \omega j
                z)\,\mathrm{d}z$ is the Fourier coefficient and $\omega
                = \frac{2\pi}{4D^2}$ is a constant. Each
                Fourier coefficient can be bounded by the infinity
                norm of function $f(z)$, \emph{i.e.}, $|a_j|
                \le \norm{f}_\infty$. A detailed analysis of the error
                $\epsilon_F$ will be discussed in the second major
                part of the proof.  The fact that $\calK(\vecx,\vecy) =
                f(\norm{\vecx-\vecy}_2^2)$ is a function of $z=\norm{\vecx-\vecy}_2^2$ naturally requires a separation of $z$
                in order to proceed with the separation of
                $\calK(\vecx,\vecy)$. Adopting notations, $\vecrho_\vecx
                = \vecx - \vecx_c$,  $\vecrho_\vecy = \vecy - \vecy_c$
                and $\vecrho_c = \vecx_c - \vecy_c$, we rewrite $z =
                \norm{\vecx-\vecy}_2^2 = \norm{\vecrho_\vecx+\vecrho_c}^2
                + \norm{\vecrho_\vecy}^2 - 2\vecrho_\vecy^T\vecrho_c -
                2 \vecrho_\vecx^T\vecrho_\vecy$ and, therefore,
                \begin{equation} \label{eq:thm-fourier-expan2}
                    \exp(\imath \omega j z) =
                    \underbrace{\exp({\imath \omega
                    j \norm{\vecrho_\vecx + \vecrho_c}^2})}_\text{function
                    of $\vecx$ only}
                    \underbrace{\exp({\imath \omega
                    j (\norm{\vecrho_\vecy}^2-2 \vecrho_\vecy^T
                    \vecrho_c)})}_\text{function of $\vecy$ only}
                    \underbrace{\exp({-\imath \omega
                    j 2\vecrho_\vecx^T\vecrho_\vecy})}_\text{function of
                    $\vecx$ and $\vecy$}
                \end{equation}

            \item[] \textit{Taylor expansion.}
                The last term in \cref{eq:thm-fourier-expan2} still
                involves both $\vecx$ and $\vecy$ and needs to be further
                separated. We apply a Taylor expansion to this term,
                \begin{equation} \label{eq:taylor}
                    \begin{split}
                        \exp\left({-\imath \omega j
                        2\vecrho_\vecx^T\vecrho_\vecy}\right) &= \sum_{k
                        = 0}^{M_t} \frac{(-\imath \omega j 2
                        \vecrho_\vecx^T\vecrho_\vecy)^{k}}{k!} +
                        \epsilon_T(j) \\ & =   \sum_{k = 0}^{M_t}
                        \frac{(-\imath 2j \omega)^{k}}{k!}
                        \sum_{|\vecalpha|=k} \binom{k}{\vecalpha}
                        \vecrho_\vecx^{\vecalpha}
                        \vecrho_\vecy^{\vecalpha} + \epsilon_T(j)
                    \end{split}
                \end{equation}
                where $M_t$ is the order of the Taylor expansion,
                $\epsilon_T(j)$ is the corresponding truncation error,
                and the last equality adopts the multi-index notation
                introduced earlier.

        \item[] \textit{Separable form.}
        Combining \cref{eq:taylor}, \cref{eq:thm-fourier-expan2}
        and \cref{eq:thm-fourier-expan1}, we obtain
        \begin{equation} \label{eq:Fourier_Taylor_separate}
            \begin{split}
                f(z) = \sum_{j=-M_f}^{M_f} \sum_{k=0}^{M_t}
                \sum_{|\vecalpha| = k} h_{j, \vecalpha}(\vecx)
                g_{j, \vecalpha}(\vecy) + \epsilon
            \end{split}
        \end{equation}
        where
        \begin{equation}
            \begin{split}
                h_{j, \vecalpha}(\vecx) & = a_j \frac{(-\imath 2 j
                \omega)^k}{k!} \binom{k}{\vecalpha}
                \exp\left({\imath \omega j \norm{\vecrho_\vecx
                +\vecrho_c}^2}\right) \vecrho_\vecx^{\vecalpha} \text{
                    and}\\
                g_{j, \vecalpha}(\vecy) &= \exp\left({\imath
                \omega j (\norm{\vecrho_\vecy}^2-2
                \vecrho_\vecy^T \vecrho_c)}\right)
                \vecrho_\vecy^{\vecalpha}
            \end{split}
        \end{equation}
        are functions of $\vecx$ only and $\vecy$ only, respectively,
        and $\epsilon$ is the overall error
        \begin{equation}
            \epsilon = \sum_{j=-M_f}^{M_f} a_{j}
                  \exp \left({\imath \omega j
                  \norm{\vecrho_\vecx + \vecrho_c}^2}\right)
                  \exp\left({\imath \omega j
                  ( \norm{\vecrho_\vecy}^2 -
                  2 \vecrho_\vecy^T \vecrho_c)} \right)
                  \epsilon_T(j) + \epsilon_F
        \end{equation}
        A na\"ive bound on $\epsilon$ is given as,
        \begin{equation} \label{eq:FT-err}
            |\epsilon| \le \sum_{j=-M_f}^{M_f} |a_{j}| |\epsilon_T(j)|
            + |\epsilon_F| \le 2M_f \norm{f}_\infty 
            \max_j \left|\epsilon_T(j)\right| + |\epsilon_F|
        \end{equation}
        where the first inequality used the fact that the absolute
        values of both exponential terms are one.

        \item[] \textit{Error analysis.}
        According to \cref{eq:FT-err}, the total error consists of
        two parts, the truncation errors from the Taylor expansion and
        from the Fourier expansion. We consider first the Taylor expansion errors. Applying the Lagrange remainder form, we bound the Taylor part of the total error as
        \begin{equation}
            \begin{split}
                2M_f\norm{f}_\infty \max_j \abs{\epsilon_T(j)} & =
                2M_f\norm{f}_\infty \max_j \abs{ \frac{\left(-\imath
                \omega j 2 \vecrho_\vecx^T\vecrho_\vecy\right)^{
                    (M_{t}+1) }} { (M_{t}+1)!} } \\
                & \leq 2 M_f \norm{f}_\infty \abs{\frac{ \left( 2 \omega
                M_f \vecrho_\vecx^T \vecrho_\vecy \right)^{M_t+1}}{
                (M_t+1)! }} \\
                & \leq \frac{2 (e M_f)^{M_t+2}}{e^2 (M_t+1)^{M_t+1}}
                \left(\frac{ D_\vecx D_\vecy }{ D^2 }\right)^{ M_t+1 }\\
                & \leq \left(\frac{D_\vecx D_\vecy}{D^2
                }\right)^{M_t+1}
            \end{split}
        \end{equation}
	    where the second inequality adopts the inequality
	    $e(\frac{n}{e})^n \le n!$ with $e$ being the Euler's
	    constant, and the third inequality can be verified with
	    our assumption $9 M_f \leq M_t$.

	    We then consider the Fourier expansion errors. According to Theorem 2 in \cite{Giardina1972}, the truncation
        error of the Fourier expansion, $\epsilon_F$ can be
        bounded as follows
        \begin{equation}
            |\epsilon_F| \leq \frac{V_q}{\pi q(\omega M_f)^q} =
            \frac{V_q}{\pi q} \left(\frac{2D^2}{\pi M_f}\right)^q
        \end{equation}
        where $V_q$ is the total variation of the $q$-th derivative of
        $f(z)$ over one period.

        Therefore, the total error $\epsilon$ in
        \cref{eq:Fourier_Taylor_separate} can be bounded as
        \begin{equation}
            |\epsilon| \le \norm{f}_\infty \left( \frac{ D_\vecx D_\vecy
            }{ D^2 } \right)^{M_t+1} + \frac{V_q}{\pi q} \left( \frac{
                2D^2 }{ \pi M_f }\right)^q
        \end{equation}

        \item[] \textit{Rank computation.}
        Equation~\cref{eq:Fourier_Taylor_separate} is a separable form
        of $\calK(\vecx,\vecy)$ in its complex form with rank at most
        \begin{equation} \label{eq:rank}
            R_c = 2M_f \sum_{\ell=0}^{M_t} \binom{\ell+d-1}{d-1}
            = 2M_f \binom{M_t+d}{d}
        \end{equation}
         where the equality comes from \Cref{lem:binomial_identity}.
        By \Cref{lem:complex-to-real-app}, the kernel function can be
        approximated by two sequences of real-valued functions
        $\{g_i\}_{i=1}^R$ and $\{h_i\}_{i=1}^R$ with rank at most
        \begin{equation}
            R(M_f, M_t, d) = 2 R_c \le 4M_f \binom{M_t+d}{d}
        \end{equation}
        Note, when $M_f$ and $M_t$ are fixed and $d \rightarrow \infty$,
        the rank grows as $O(d^{M_t})$.
        \end{itemize}
    ~
    \end{proof}


\section{Optimality of the polynomial growth of the \frank{}}
\label{sec:optimal}
\Cref{thm:cheb1_corollary} shows that asymptotically, for a given error
$\epsilon$ and dimension $d$, the \frank{} needed for a low-rank representation to approximate an analytic function with error $\epsilon$ is proportional to $d^{\frac{-\log c_1\epsilon}{c_2}}$. We will show that up to some constant, this asymptotic rank has achieved the lower bound on the minimal number of interpolation points needed for a linear operator to reach a required accuracy \cite{Wozniakowski1994}. 

Wozniakowski stated in \cite{Wozniakowski1994} that for a given $\epsilon$ and $d$, the minimal number of interpolation points $n = n(\epsilon, d)$, for a linear interpolation operator $L_n(f) = \sum_{j=1}^n f(x_j)c_j$ to approximate a function $f$ that satisfies $\|f\|_k \le 1$ in the $L_2$ norm with precision $\epsilon$, is bounded by 
\begin{equation}
    \label{eq:sparse_grid}
    n(\epsilon, d) \ge c_\epsilon d^{c \log(\epsilon^{-1})}
\end{equation}
where $c_j \in C([-1,1]^d)$, and $\|f\|_k^2 := \sum_{l \in \mathbb{N}_0}(1+l^2)^k a_l^2[f]$ with $a_l[f]$ denoting the Fourier coefficient of $f$.

We establish that the \frank{} in \Cref{thm:cheb1} is equivalent to $n(\epsilon, d)$ described above. We start with the assumptions. 
In \Cref{thm:cheb1}, the analytic assumption implies that $\|f\|_k \le 1$, and the $L_\infty$-norm error suggests the same results hold for $L_2$-norm error if we assume the volume of the domain is bounded by 1. We then connect the number of points from a function interpolation to the number of terms from a function expansion by the following formula:
\begin{equation}
\label{eq:interpolation_expansion}
        \calK(\vecx, \vecy) = \sum_{i=1}^{n} \calK (\vecx_i, \vecy) c_i(\vecx) + \epsilon
    \end{equation}
    Therefore, we have established the equivalence of the \frank{} in \Cref{thm:cheb1} and $n(\epsilon, d)$,
    and we conclude that our \frank{} reaches the lower bound in \cref{eq:sparse_grid} asymptotically.

    {\bf Related work.}
Barthelmann \cite{Barthelmann2000} considered a polynomial interpolation on a sparse grid, and showed that such interpolation could reach an acceptable accuracy with the number of interpolation points growing polynomially with the data dimension.
Specifically, consider a real-valued function $f$ defined on $[-1,1]^d$ with its derivative $D^{\vecalpha} f$ being continuous for $\|\vecalpha\|_\infty \le k$.
    If we interpolate $f$ using the Smolyak formula \cite{Smolyak1963}, then the interpolation error in the 0-norm is bounded by
\begin{equation}
    \label{eq:sparse2}
    c_{d,k} N^{-k} (\log N)^{(k+1)(d-1)}  \|f\|_k
\end{equation}
     where the norms $\|\cdot\|_0$ and $\|\cdot\|_k$ adopt the same notations as above.  The number of interpolation points used (see \cite{Novak1999}) is
\begin{equation}
    N = N(k+d, d) = \sum_{s = 0}^{\min (k,d)} \binom{k}{s}  \binom{k+d-s}{k} \le \binom{2k+1+d}{d}
\end{equation}

Consider $N(k+d, d)$. When $k$ is fixed, and $d \rightarrow \infty$, the number of points used in the Smolyak technique roughly behaves as $O(d^k)$. We use the same argument between the lines of \cref{eq:interpolation_expansion} to connect the number of function interpolation points and the number of function expansion terms, and conclude that the polynomial dependence on $d$ is consistent with our result in \cref{eq:kernel_seperate_form}.

In the following section, we use the matrix rank to verify our
theoretical results on the function rank. We have mentioned in
\Cref{sec:intro} that the \frank{} is an upper bound of the \mrank{}. Hence, we would expect the matrix rank related to the max norm to grow polynomially with $d$ as well. The low-rank representation of a kernel function and its approximation error can be related to those of a kernel matrix defined on the same domain in the following way. If a kernel function $\calK$ can be approximated by the separable form $\sum_{i=1}^R g_i(\vecx)h_i(\vecy)$ with $L_\infty$ error $\epsilon$, then for an $n$ by $n$ kernel matrix $K$ with entries $K_{ij} = \calK(\vecx_i, \vecy_j)$, it is straightforward to construct a low-rank representation $GH^T$ of $K$ with rank at most $R$, where $G_{ij} = g_j(\vecx_i)$ and $H_{ij} = h_j(\vecy_i)$. And, the matrix approximation error in the Frobenius-, two-, and max- norm is bounded by $\epsilon n$, $\epsilon n$, and $\epsilon$, respectively. 

Now that the connections between \mrank{} and \frank{} have been established explicitly, we can move on to the numerical experiments. 

\section{Numerical experiments}
\label{sec:experiments}
    In this section, we experimentally verify two main results from our
    theorems: the polynomial growth of the numerical \frank{} with the
    data dimension, and the influence of the diameters of $\Omega_\vecx$
    and $\Omega_\vecy$ on the approximation error. By the arguments
    before the beginning of this section, we will use the \mrank{} to verify the behavior on the
    \frank{}. We report the \mrank{} for various data distributions due to our worst-case error bounds.

     \subsection{Experimental settings}
     We consider first the data distribution in the experiments. Generating data which is representative of the worst case is
       difficult. On the one hand, sampling randomly from
       common distributions will cause the empirical variance of the pair-wise distances to decrease with $d$, due to concentration of measure; on the other hand, designing the points to achieve a large empirical variance will require correlations among points and cause them to lie on a
       manifold. Both methods will yield matrices with lower
       \mrank{}s. Considering that the {RBF}s are functions of the distances, we seek distributions of points in a unit cube of dimension $d$ such that the pair-wise distances follow a probability distribution whose variance decreases slower with $d$, and the points do not lie approximately on a manifold of the domain.
       
       For a limited number of points that is imposed by the computational limit and for large $d$, a fast decay of the empirical variance is observed for quasi-uniform distributions of points, \emph{e.g.}, using data generated from perturbed grid points or Halton points. The pair-wise distances of Halton points and uniform sampled points fell into a small-sized subinterval of $[0, \sqrt{d}]$ that is away from the endpoint $\sqrt{d}$, reducing the range of observed distances, leading to spurious low-ranks.
               
        We propose a sampling distribution---which we call the endpoint distribution---to encourage the occurrence of large distances that would otherwise not be covered with a high probability. Specifically, for a random variable $X$, $\text{Pr}(X = a) = \text{Pr}(X = b) = p_d$, $\text{Pr}(a < X < b) = 1-2p_d$, $\text{Pr}(X < a) = \text{Pr}(X > b) = 0$, where $p_d$ was selected by a grid search to yield the largest rank for each $d$. The range of the covered domain is much wider than either using Halton points or uniform sampling. 
        
    We consider next the numerical \mrank{} that will be reported in the results. The numerical \mrank{} associated with tolerance $tol$ is
        $$R_{tol} = \min \left\{ r \mid \|K - U_rS_rV_r^T\| \le tol \, \|K\| \right\},$$
    where $U_r, S_r, V_r$ are factors from the singular value
    decomposition ({SVD}) of the matrix $K$. Depending on the choice of the
    norm, the value of $R_{tol}$ will vary. Our main focus is on the
    max norm, which is consistent with the function infinity norm in the
    theorems. Theoretically, the max error does not decrease
    monotonically with the \mrank{}; however, we found that for
    the RBF kernel matrices, the max error decreases in general with the
    \mrank{}, except for certain small, short-lived increases.

    Throughout our experiments, we fix the number of points at 10,000. The kernel used is the Gaussian kernel $\exp(-\|\vecx - \vecy\|_2^2/h^2)$ with $h = \sqrt{d}$. The data were generated from the above endpoint distribution with endpoints to be 0 and 1. For each set of dimension and tolerance, we report the mean and standard deviation of the numerical matrix rank out of 5 independent runs. 
    
     \subsection{Experimental results}
    \Cref{fig:rank_vs_dim} shows the numerical \mrank{} as a function of data dimension subject to a fixed tolerance on 3 different data overlapping scenarios: source and target data both in $[0,1]^d$; source data in $[0, 2/3]^d$ and target data in $[1/3, 1]^d$; and source data in $[0, 1/2]^d$ and target data in $[1/2, 1]^d$. By design, the ratio between $D_\vecx \,(\text{or}~D_\vecy)$ of these scenarios is roughly 6 : 4 : 3 and they are shown from top to bottom for each fixed tolerance in \Cref{fig:rank_vs_dim}.

    \begin{figure}[htbp]
        \centering
            \begin{subfigure}[b]{0.32\textwidth}
                \includegraphics[width=\textwidth]{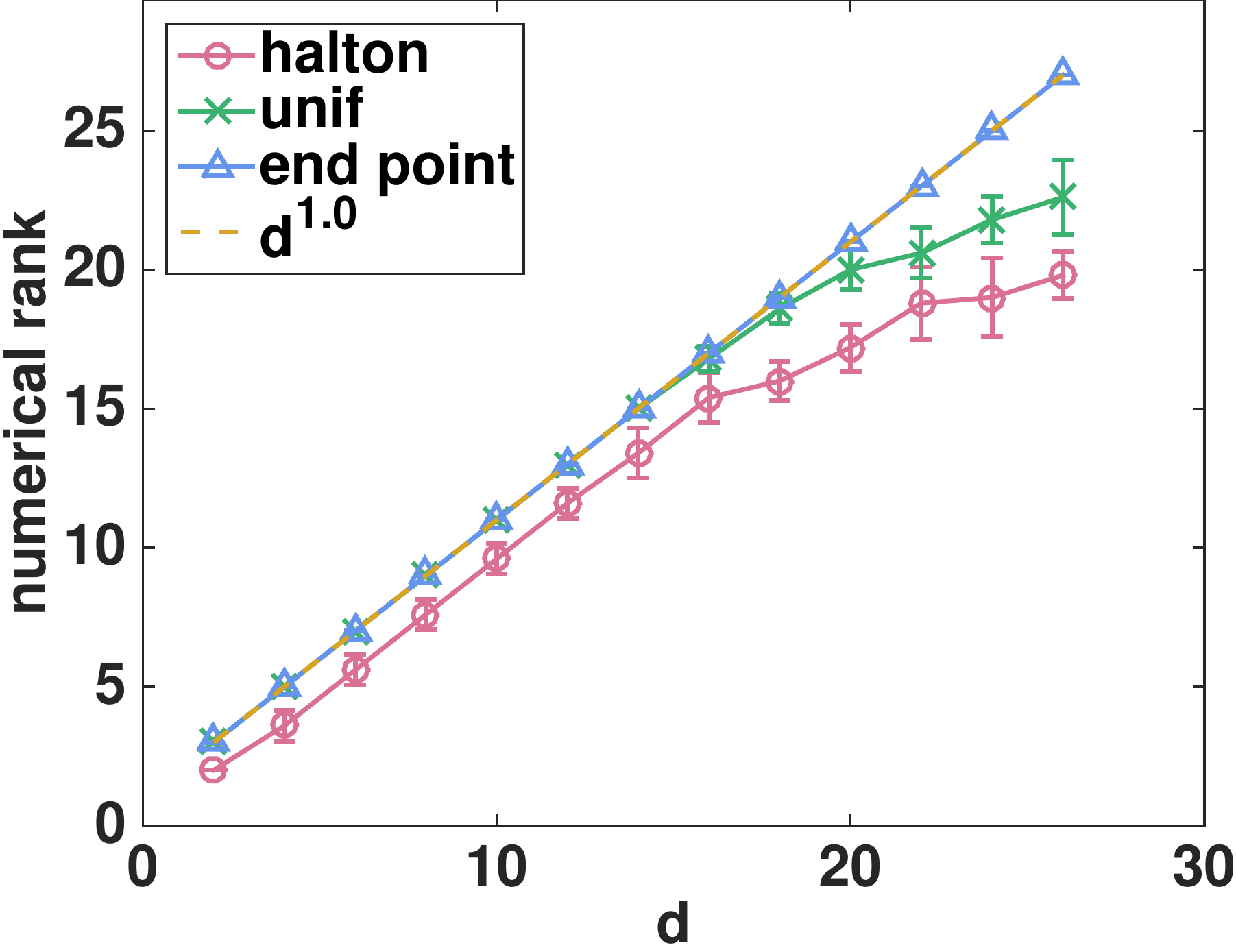}
                \caption{tol = $10^{-1}$}
            \end{subfigure}
            \begin{subfigure}[b]{0.32\textwidth}
                \includegraphics[width=\textwidth]{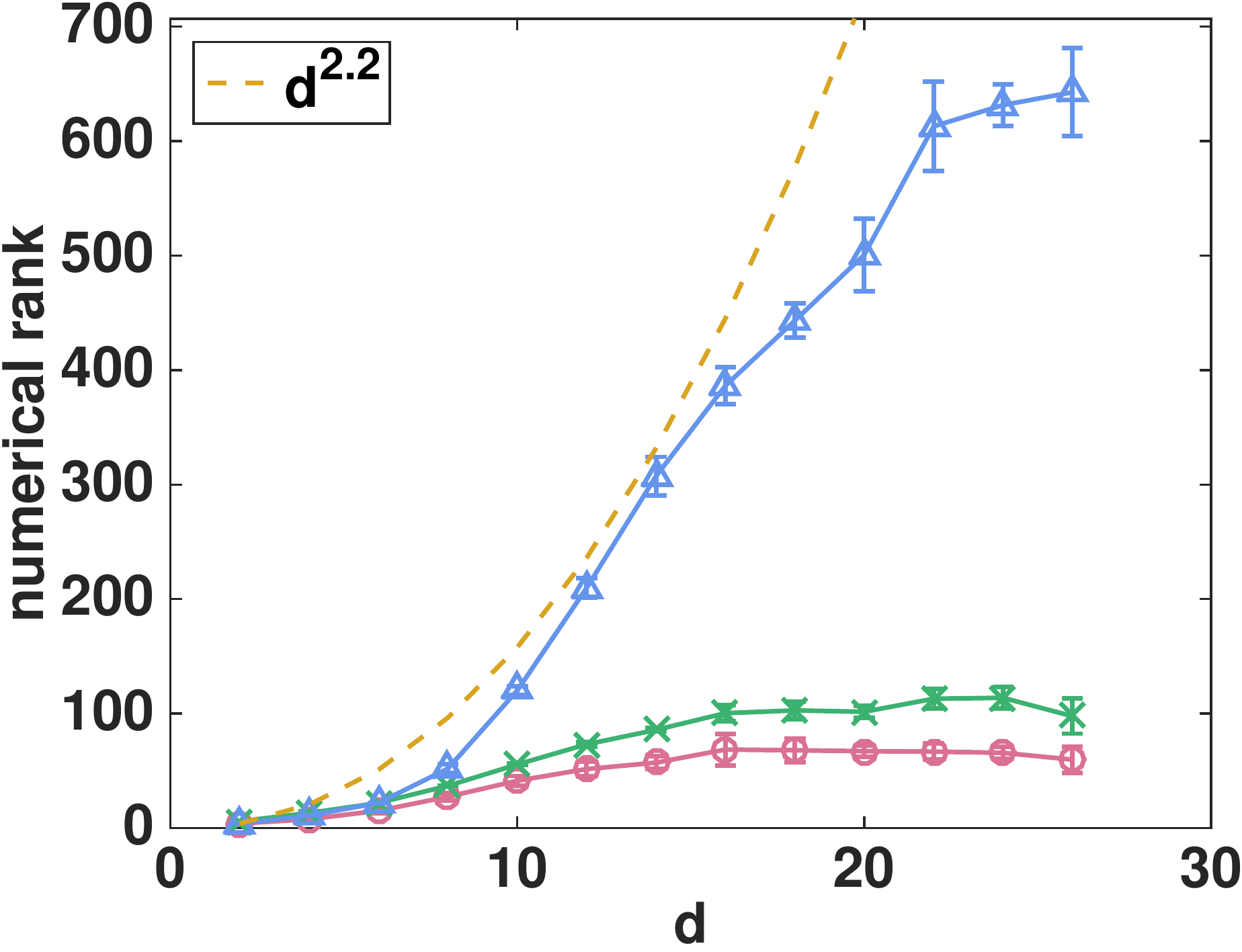}
                \caption{tol = $10^{-2}$}
            \end{subfigure}
            \begin{subfigure}[b]{0.32\textwidth}
                \includegraphics[width=\textwidth]{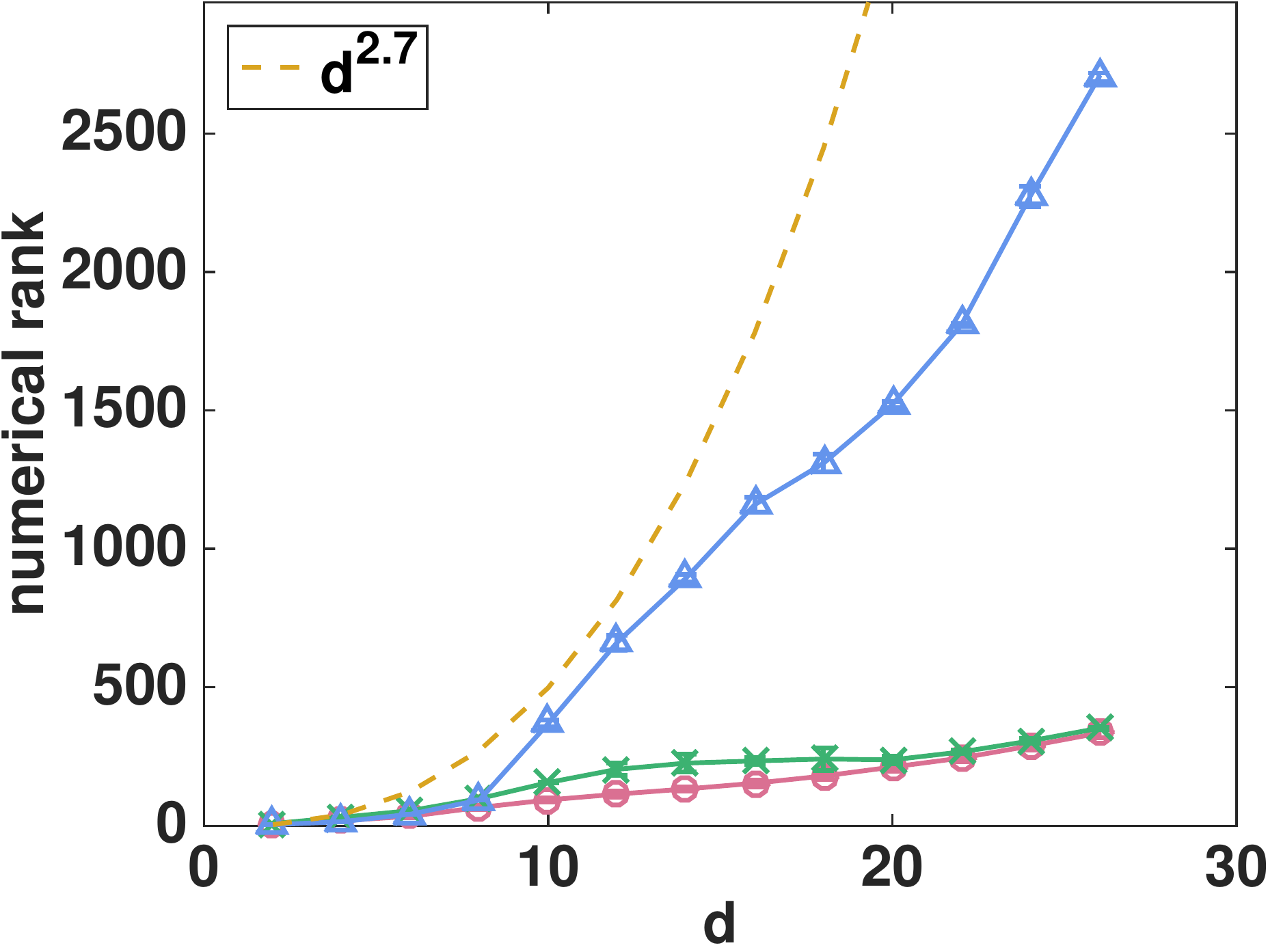}
                \caption{tol = $10^{-3}$}
            \end{subfigure}
            \vspace{-.1in}
            {\caption*{\bf completely overlapped}}
                 \begin{subfigure}[b]{0.32\textwidth}
                     \includegraphics[width=\textwidth]{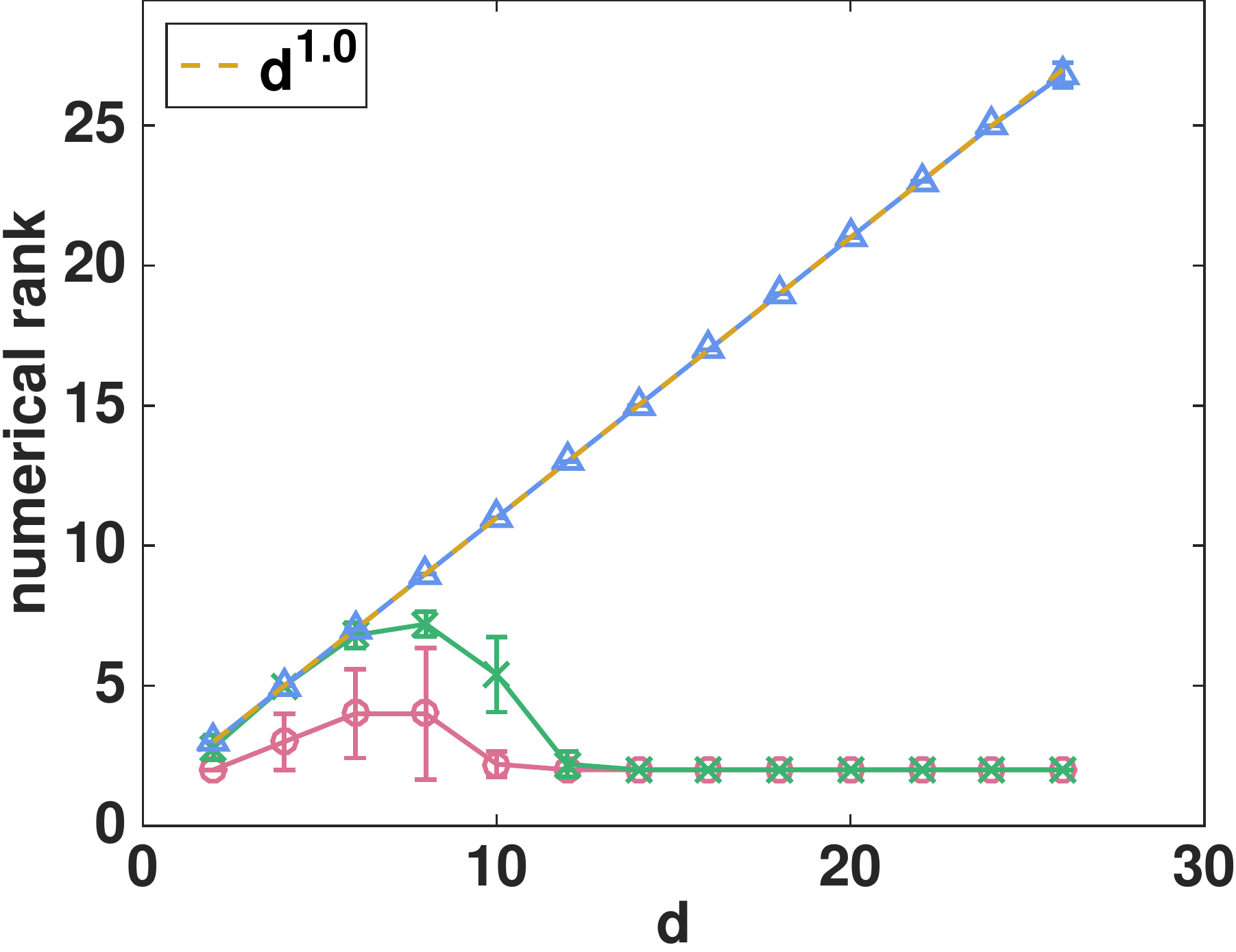}
                     \caption{tol = $10^{-1}$}
                 \end{subfigure}
            \begin{subfigure}[b]{0.32\textwidth}
                \includegraphics[width=\textwidth]{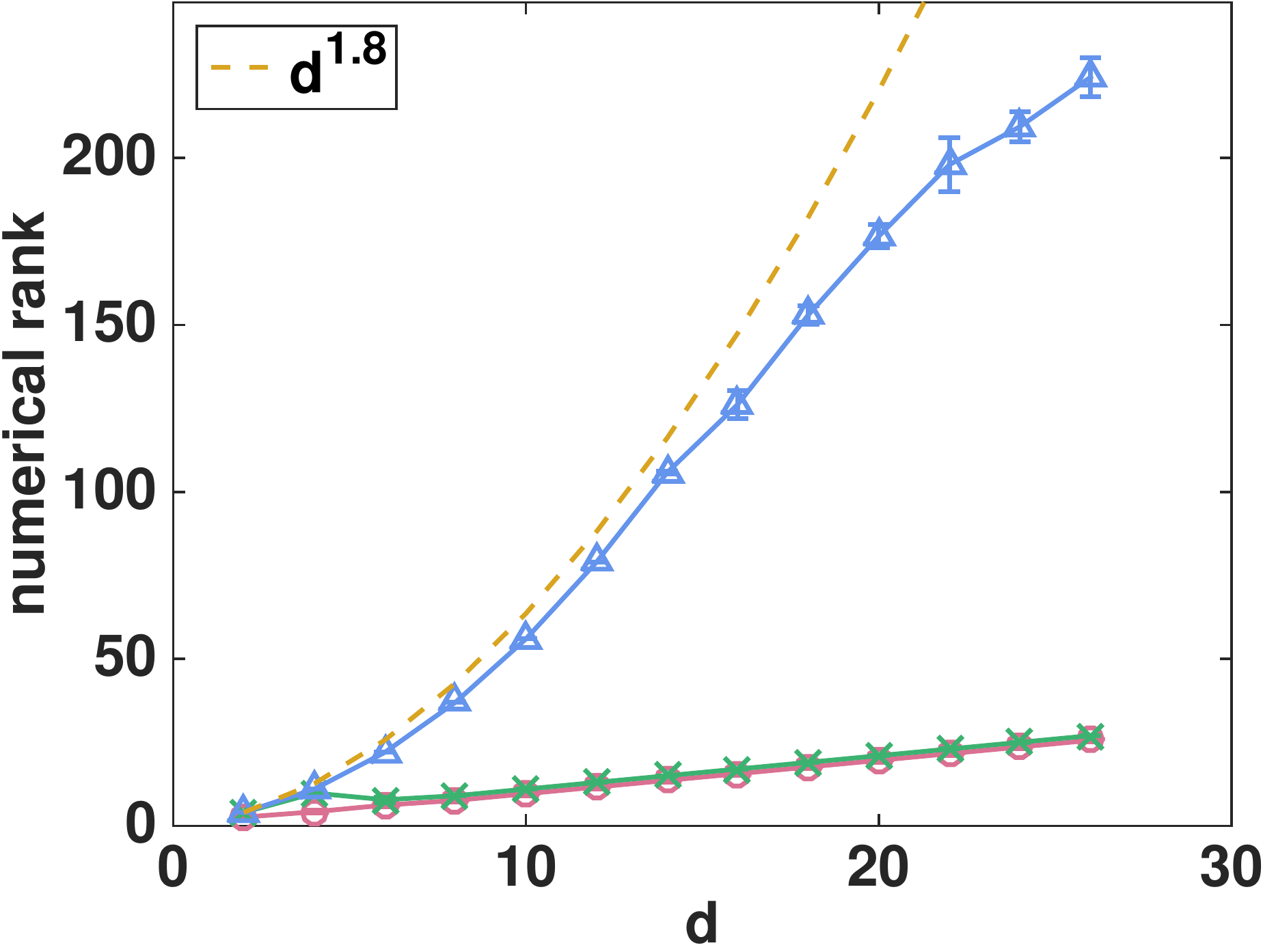}
                \caption{tol = $10^{-2}$}
            \end{subfigure}
            \begin{subfigure}[b]{0.32\textwidth}
                \includegraphics[width=\textwidth]{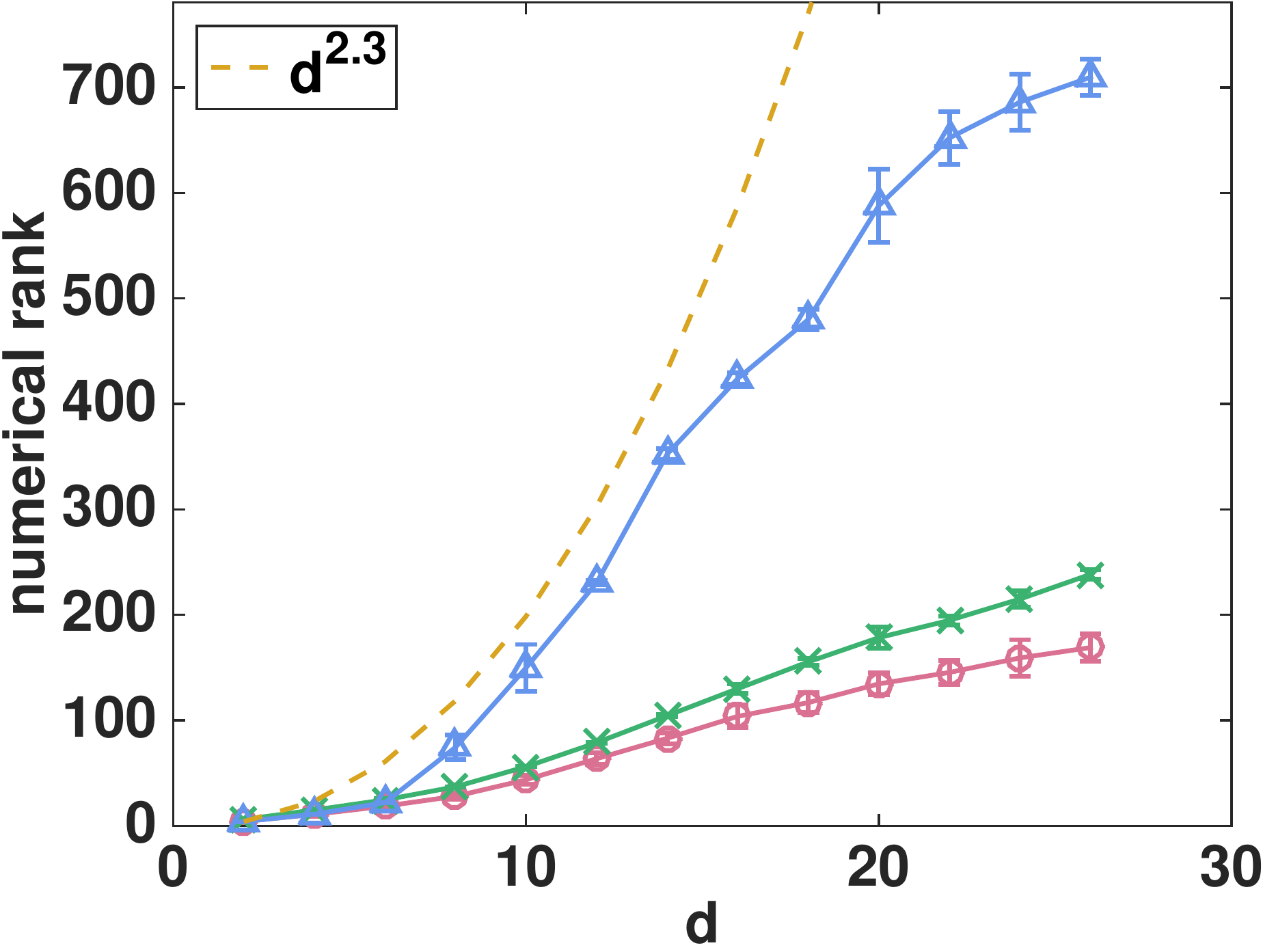}
                \caption{tol = $10^{-3}$}
            \end{subfigure}
            \vspace{-.1in}
            {\caption*{\bf partially overlapped}}
                \begin{subfigure}[b]{0.32\textwidth}
                    \includegraphics[width=\textwidth]{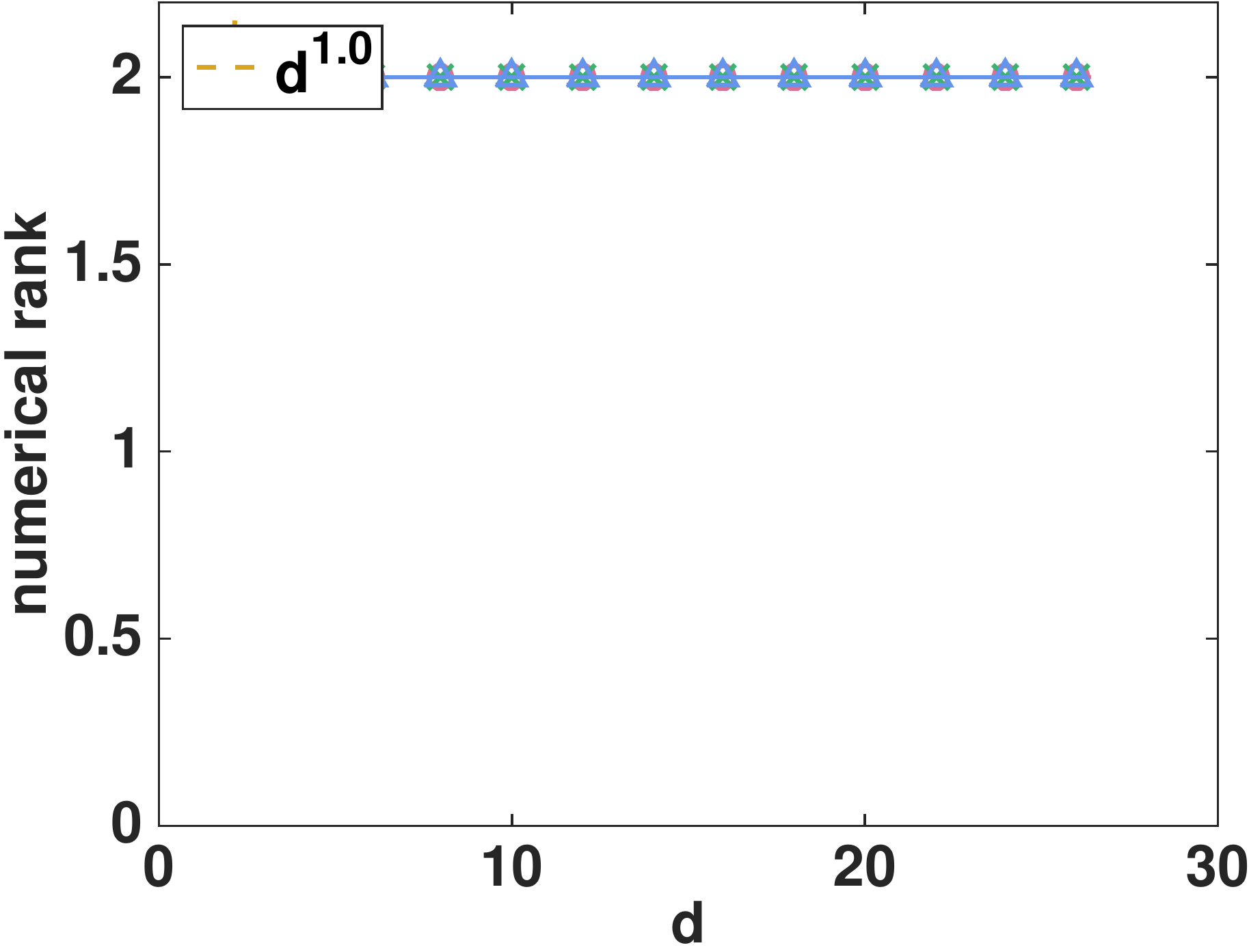}
                    \caption{tol = $10^{-1}$}
                \end{subfigure}
            \begin{subfigure}[b]{0.32\textwidth}
                \includegraphics[width=\textwidth]{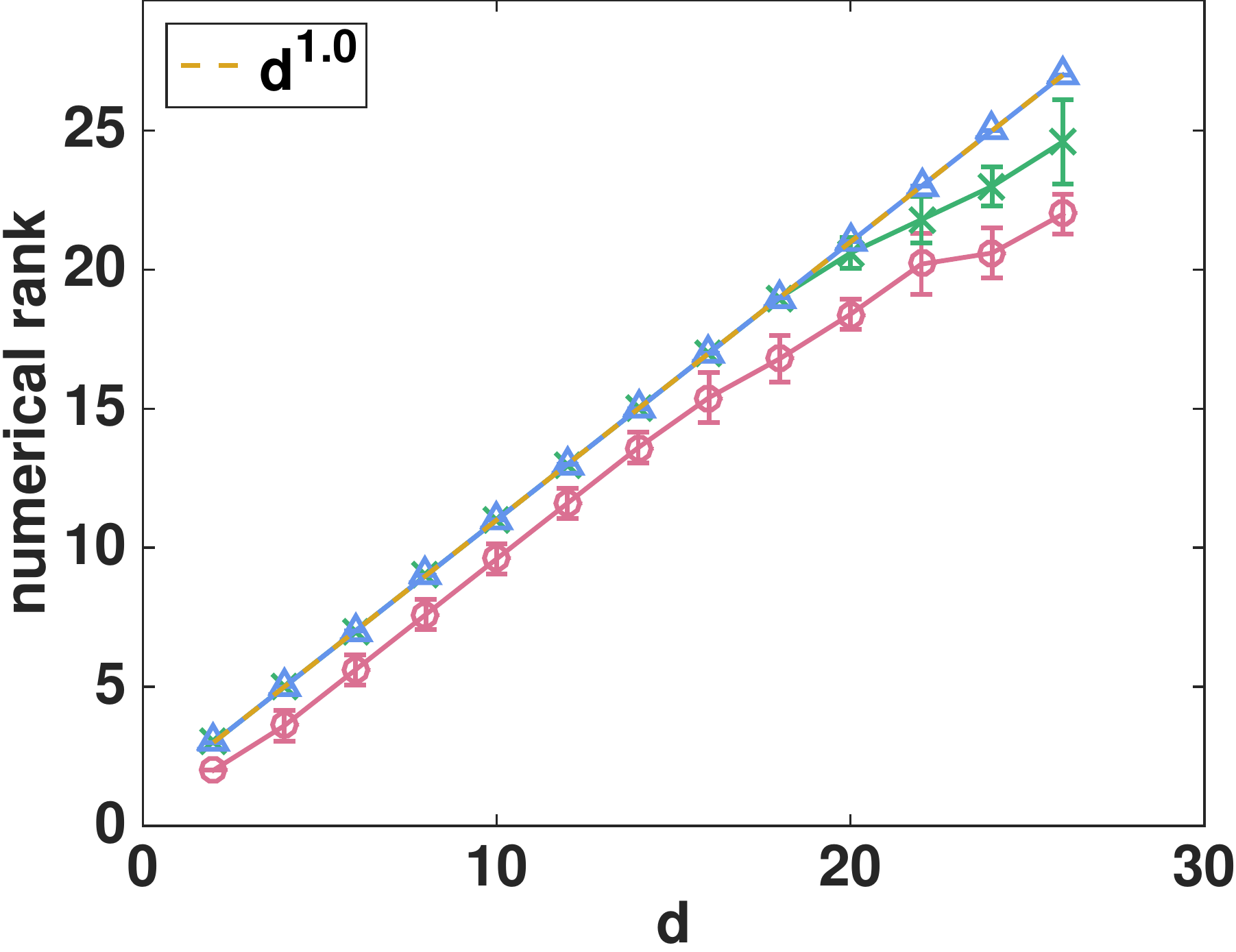}
                \caption{tol = $10^{-2}$}
            \end{subfigure}
            \begin{subfigure}[b]{0.32\textwidth}
                \includegraphics[width=\textwidth]{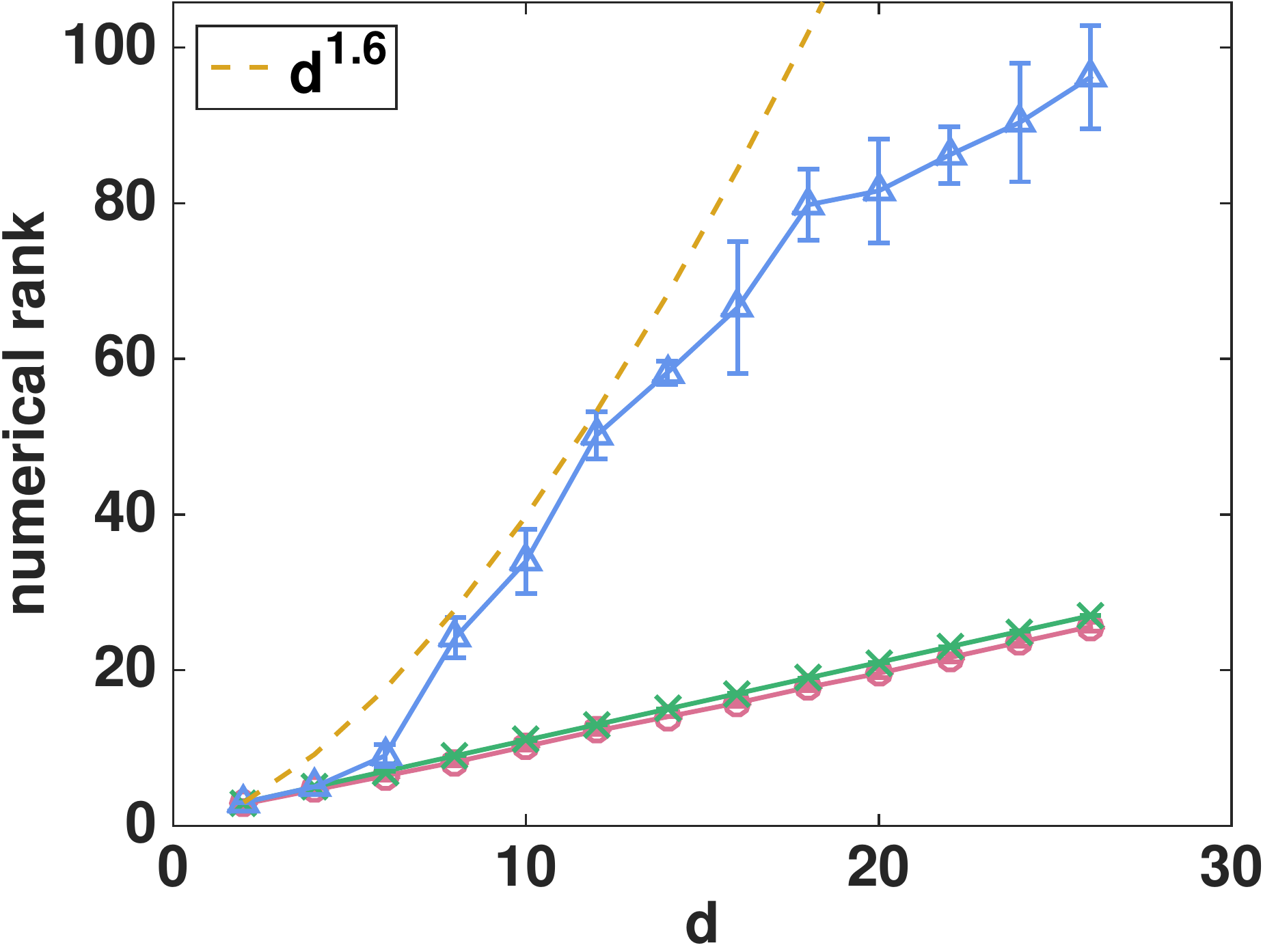}
                \caption{tol = $10^{-3}$}
            \end{subfigure}
            \vspace{-.1in}
            {\caption*{\bf not overlapped}}
        \caption{Numerical rank vs.\ data dimension with different sampling methods. The rank was related to the max norm, and the data size was fixed at 10,000. Subplots shared the same legend, where ``halton" is Halton set; ``unif" is uniform sampling; and ``end point" is our proposed sampling. Subplots considered different data scenarios, in which the regions containing the source and target points either completely overlap [$(a)$ to $(c)$], partially overlap [$(d)$ to $(f)$], or do not overlap [$(g)$ to $(i)$].}
        \label{fig:rank_vs_dim}
    \end{figure}

    The plots along each row verify that for a fixed $n$ that
    represents the polynomial order in the low-rank representation, the
    \frank{} grows as $O(d^n)$ with $d$. In our experiments, we increase $n$ by decreasing the approximation tolerance, according to the relation between order $n$ and error $\epsilon$ in \Cref{thm:cheb1}. We observe results consistent with the order $O(d^n)$.

    The plots along each column verify that decreasing the domain diameter for either $\Omega_\vecx$ or $\Omega_\vecy$ reduces the error bound. \Cref{thm:fourier_taylor} suggests that $D_\vecx$ and $D_\vecy$ influence the error in the form of $\left(\frac{D_\vecx D_\vecy}{D^2}\right)^{M_t + 1}$. That is, to maintain a certain precision, a smaller domain diameter allows $M_t$ to be smaller, and consequently allows the rank to be smaller. This relation of domain diameter and error bound is verified by our experimental results when observing from top to bottom. 

    \Cref{fig:rank_vs_dim_varying_norms} further reports the \mrank{} related to different norms. In particular, the \mrank{} related to the Frobenius-norm and the two-norm increases with $d$ in the small-$d$ regime, and in the large-$d$ regime, it decreases. This is an interesting observation. Regretfully, we cannot provide a clear explanation based on our theorems; we will only describe our observation in the paper and leave the theory for future work. 
    
        \begin{figure}[htbp]
        \centering
            \begin{subfigure}[b]{0.32\textwidth}  
 \includegraphics[width=\textwidth]{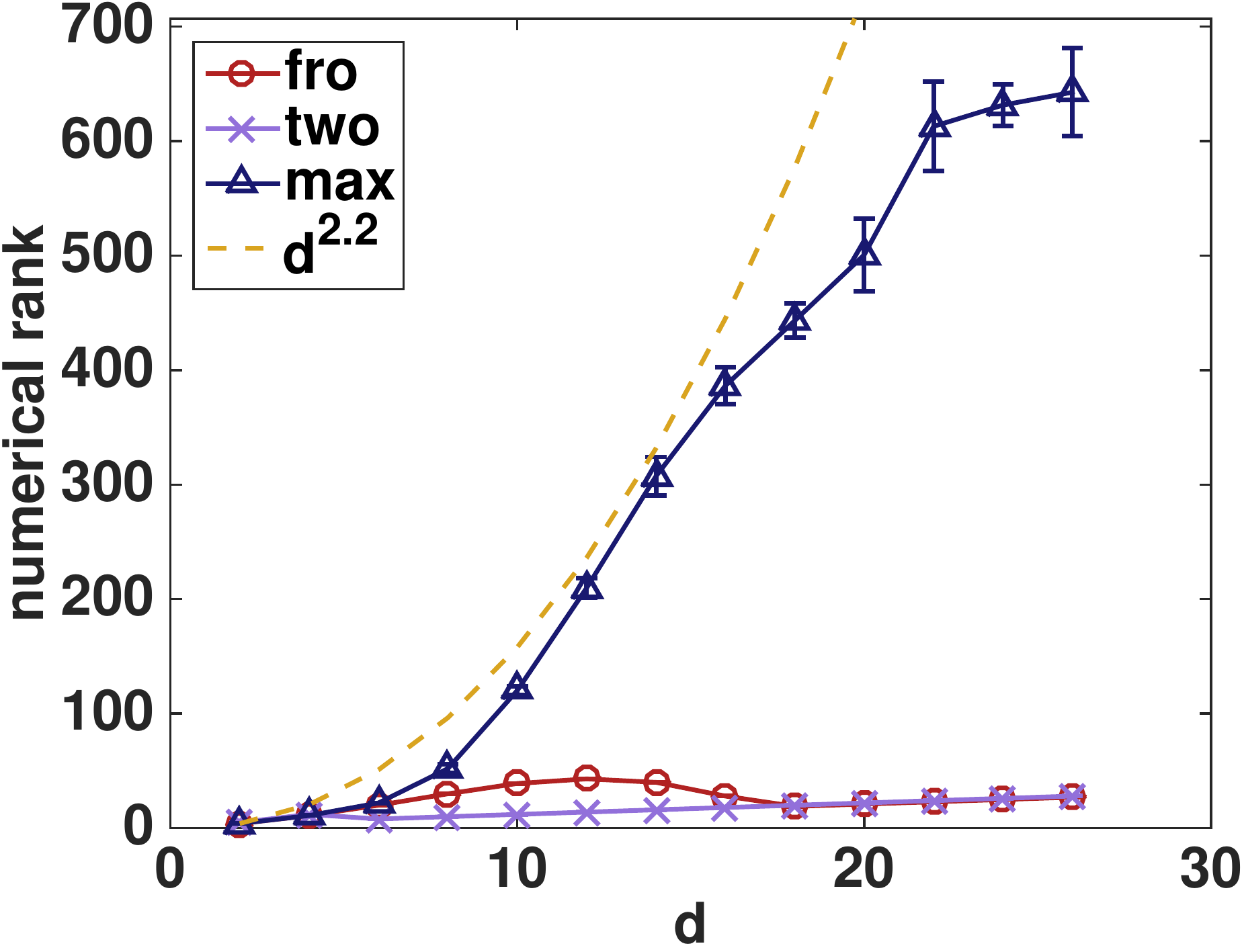}
                \caption{complete overlapped}
            \end{subfigure}
            \begin{subfigure}[b]{0.32\textwidth}  
 \includegraphics[width=\textwidth]{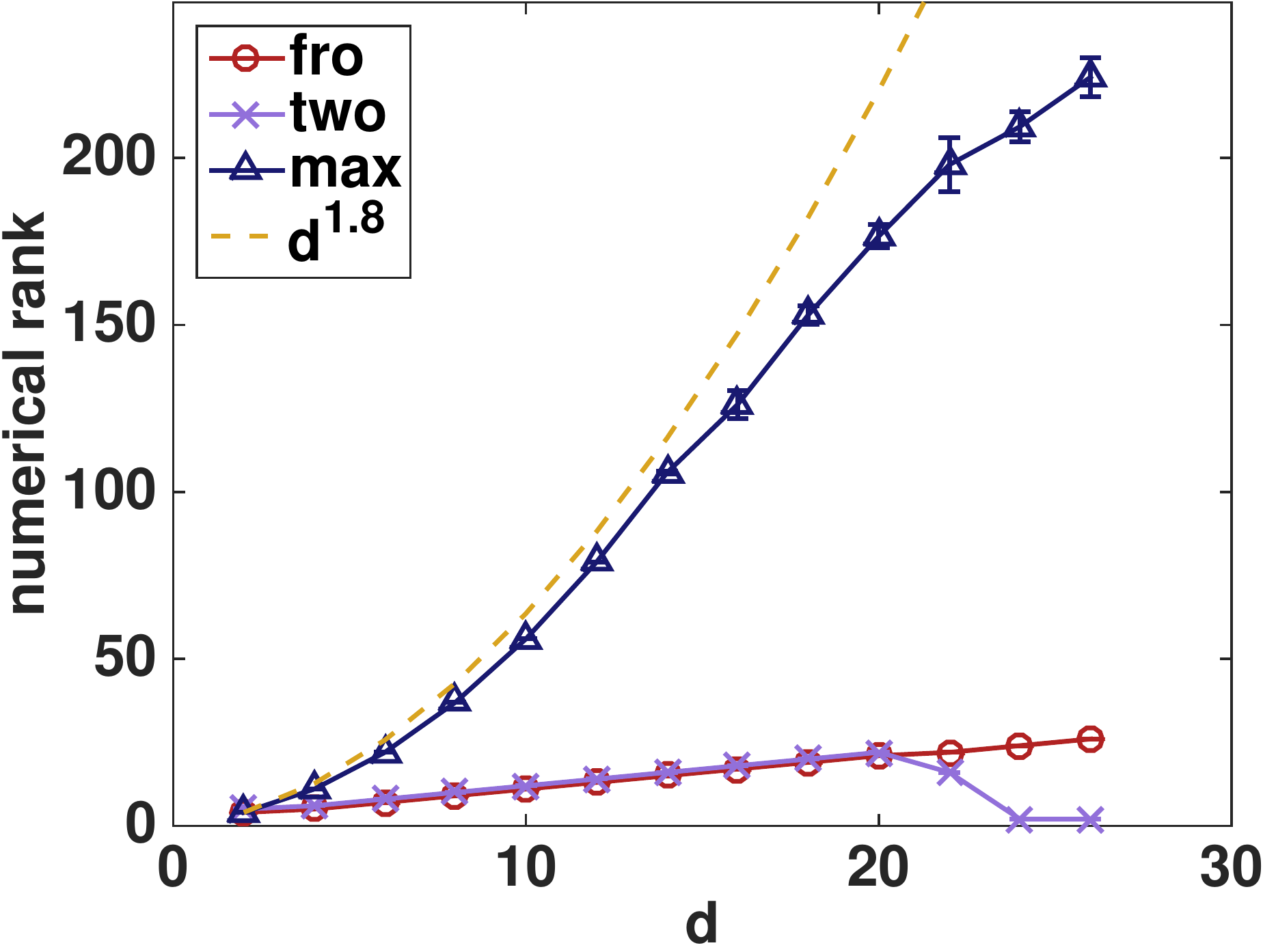}
                \caption{partial overlapped}
            \end{subfigure}
        	\begin{subfigure}[b]{0.32\textwidth}    \includegraphics[width=\textwidth]{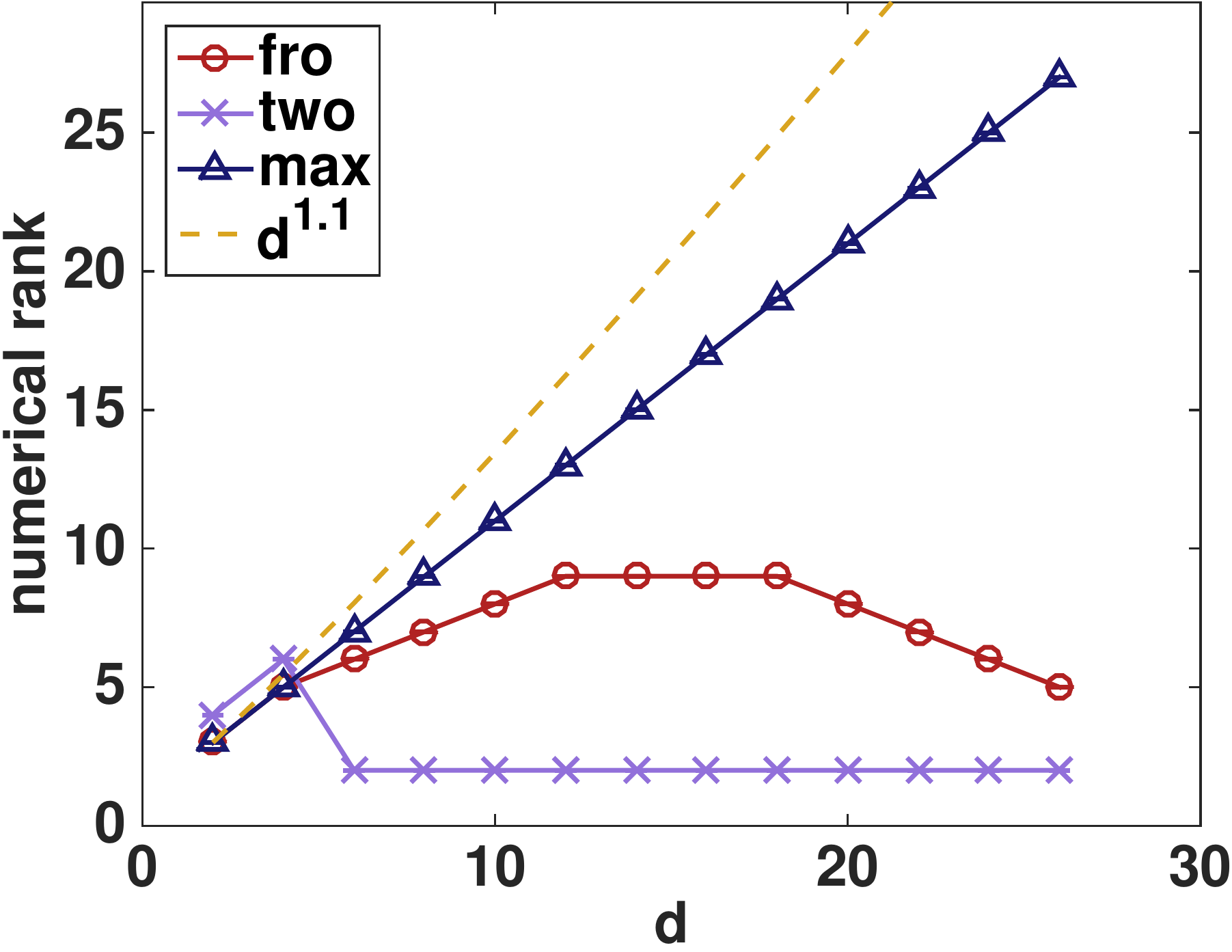}
                \caption{not overlapped}
            \end{subfigure} 
        \caption{Numerical rank vs.\ data dimension with rank related to different norms. The data size was fixed at 10,000 and the data were sampled from our end point distribution. The legend lists the choice of norms in the rank definition $\min \{ r \mid \|K - U_rS_rV_r^T\| \le tol \, \|K\| \}$, where ``fro" is Frobenius norm; ``two" is two norm; and ``max" is max norm. The tolerance was fixed at $10^{-2}$.}
        \label{fig:rank_vs_dim_varying_norms}
    \end{figure}
 
 	 To summarize, up to some precision, smooth {RBF} kernels behave
     like kernels constructed by summations of products of functions of
     $\vecx$ and of $\vecy$. For a fixed kernel on a fixed domain, the
     maximal total degree of those products and the dimension altogether
     determine the observed \frank{} in practice. And, the dimension
     influence on the \frank{} is only a power of the dimension, and the
     power depends on the accuracy. In addition, this is still the
     worst-case scenario, attained for large and regular point sets. The
     real-world data are often more structured and rarely realize the
     worst-case, and for a fixed kernel and the practical data, the
     low-rank approximations would have lower \frank{}s, and hence the
     corresponding kernel matrices would have lower \mrank{}s.

\section{Group pattern of singular values}
\label{sec:plateau_decay}
In this section, we reveal and explain a group pattern in the singular values of RBF kernel matrices. Specifically, the singular values form groups by their magnitudes, and the group cardinalities are dependent on the data dimension and independent of the data size.

	If we order the singular values from large to small, then the indices where significant decays occur can be described as $\binom{k+d}{d}$. This number is a cumulative sum of the dimensions of the $d$-variate polynomial spaces arising in the terms of the truncated power series kernel $\sum_{|\vecalpha| \le k} c_{\vecalpha} \vecx^{\vecalpha} \vecy^{\vecalpha}$ up to order $k$, which is close to our separable form of the kernel in a loose sense. 
    
    However, this formula fails to capture those less significant decays. We, therefore, explain the group pattern based on \Cref{thm:fourier_taylor} by an appropriate grouping of the number of terms in the function's separable form. For any RBF, consider the number of separate terms $n(M_f, M_{j})$ in its separable form:
   \begin{equation}
       \begin{split}
           \label{eq:decay_terms}
           n(M_f, M_{j}) = \sum_{j=0}^{M_f} \sum_{k = 0}^{M_{j}} n_k = \sum_{j=0}^{M_f} \sum_{k = 0}^{M_{j}} \binom{k+d-1}{k} = \sum_{j=0}^{M_f} \binom{M_{j}+d}{d}
       \end{split}
   \end{equation}
The two summations correspond to the Fourier expansion of the kernel function, and the Taylor expansion of each Fourier term, respectively. Let $n_k$ denote the number of separate terms in $\left( \vecrho_{\vecx}^T \vecrho_{\vecy} \right)^k$ that occurs in the $k$-th order Taylor term. The observed group cardinalities are described by a grouping of the terms in \cref{eq:decay_terms}, whose order is governed by the truncation error. One grouping example is
\[
    \underbrace{n_0, n_1,n_2}_{\text{1st term of Fourier expansion}} \mid \underbrace{n_0, n_1}_{\text{2nd term of Fourier expansion}} \mid \underbrace{n_3, n_4}_{\text{1st term of Fourier expansion}}
\]
    The cardinality of the 1st, 2nd and 3rd group is $n_0+n_1+n_2$, $n_0+n_1$ and $n_3+n_4$, respectively, and a cumulative sum of which yields the decay indices. The formula given by \cref{eq:decay_terms} generalizes that given by the dimension of the polynomial space. In the special case where only the first-order Fourier term is considered, these two formulas agree, namely the number of the Taylor terms up to order $k$ matches the dimension of the $d$-variate polynomial space of maximum degree $k$. 

    \subsection{Experimental Verification}
We experimentally verify the above claim.

    \Cref{fig:sval_ratio} shows the ratio of
    the $i$-th largest singular value to the next smaller one. We are interested in the group cardinality and the singular value decay amount, which altogether determine the \mrank{}. The group
    cardinality is the distance between two adjacent high-ratio indices, and the singular value decay amount
    is indicated by the magnitudes of the ratio (the spike). These two quantities are independent of the data size, suggesting that for a fixed kernel and a fixed precision, the numerical \mrank{} is independent of the data size, assuming the data does not lie in a manifold. Additionally, this also verifies an earlier statement that as the point sets in a fixed domain become denser, the rank and the error remain unchanged. 
    \begin{figure}[htbp]
        \centering
            \begin{subfigure}[b]{0.45\textwidth}
                \includegraphics[width=\textwidth]{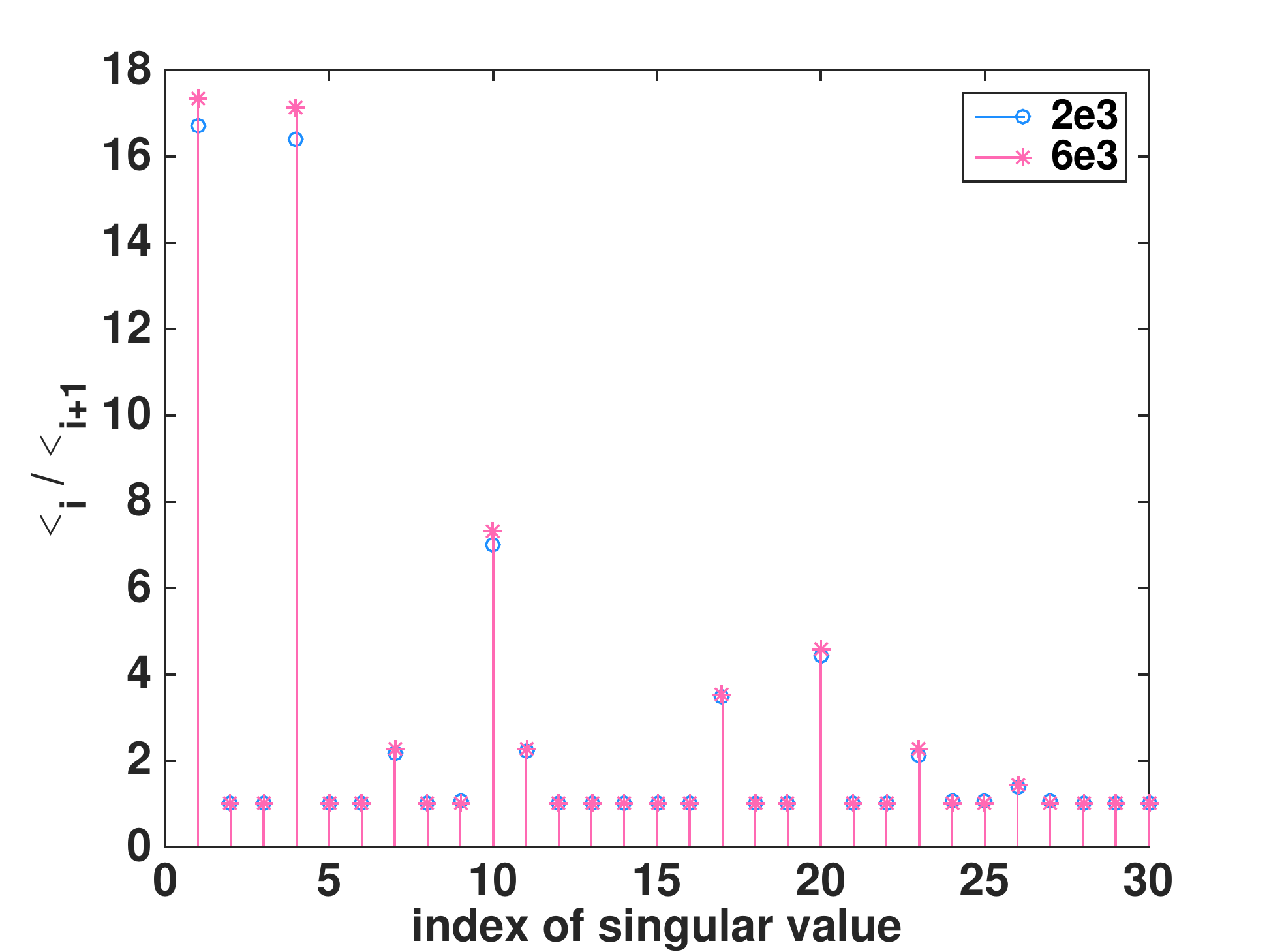}
                \caption{$d = 3$, $\exp(-\norm{\vecx-\vecy}_2^2/h^2)$}
                \label{fig:sval_ratio_d3_gaussian}
            \end{subfigure}
            \begin{subfigure}[b]{0.45\textwidth}
                \includegraphics[width=\textwidth]{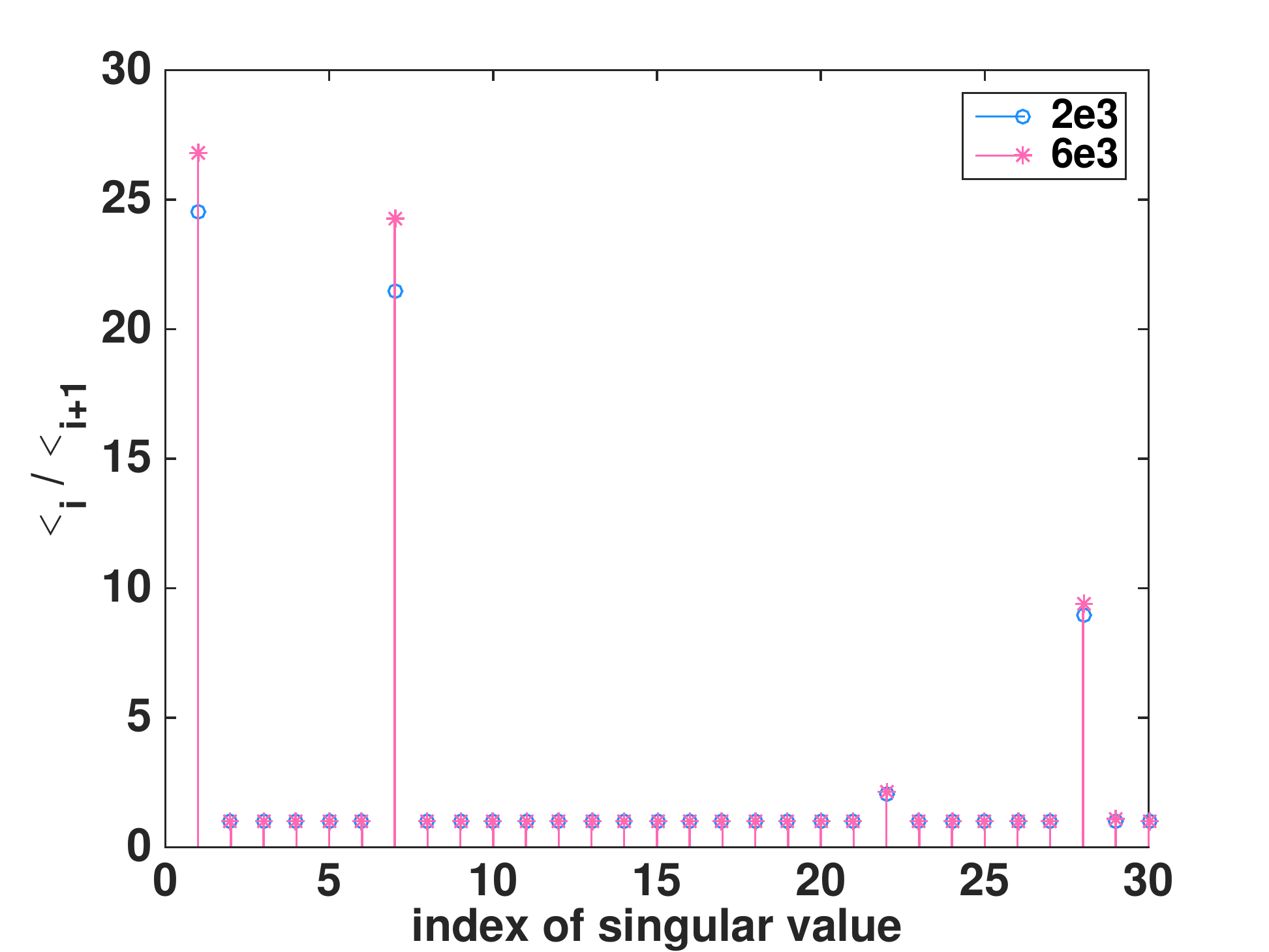}
                \caption{$d = 6$, $\exp(-\norm{\vecx-\vecy}_2^2/h^2)$}
                \label{fig:sval_ratio_d6}
            \end{subfigure}
            \begin{subfigure}[b]{0.45\textwidth}
                \includegraphics[width=\textwidth]{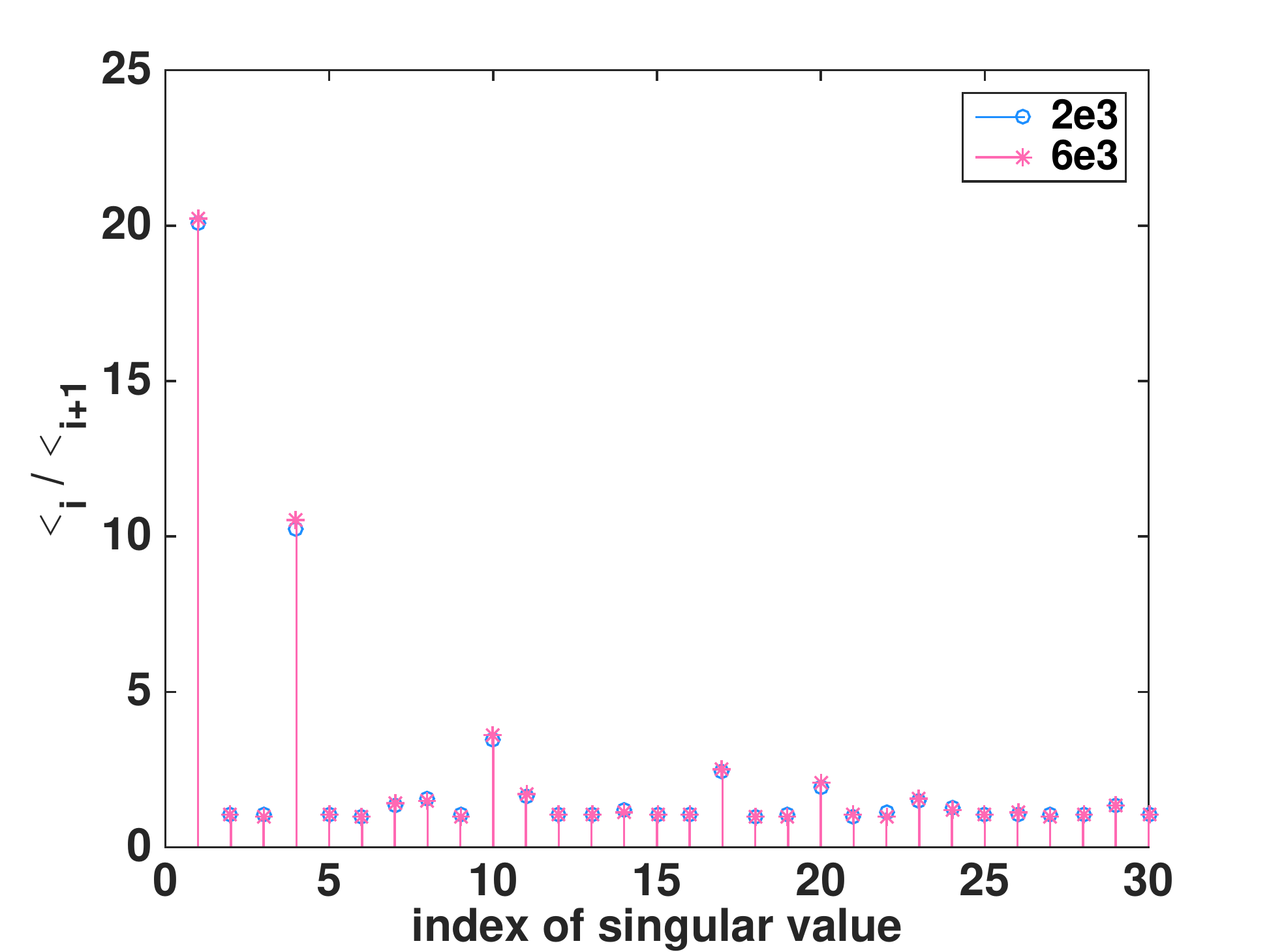}
                \caption{$d = 3$, $\frac{1}{1+\norm{\vecx-\vecy}_2^2/h^2}$}
                \label{fig:sval_ratio_d3}
            \end{subfigure}
            \begin{subfigure}[b]{0.45\textwidth}
                \includegraphics[width=\textwidth]{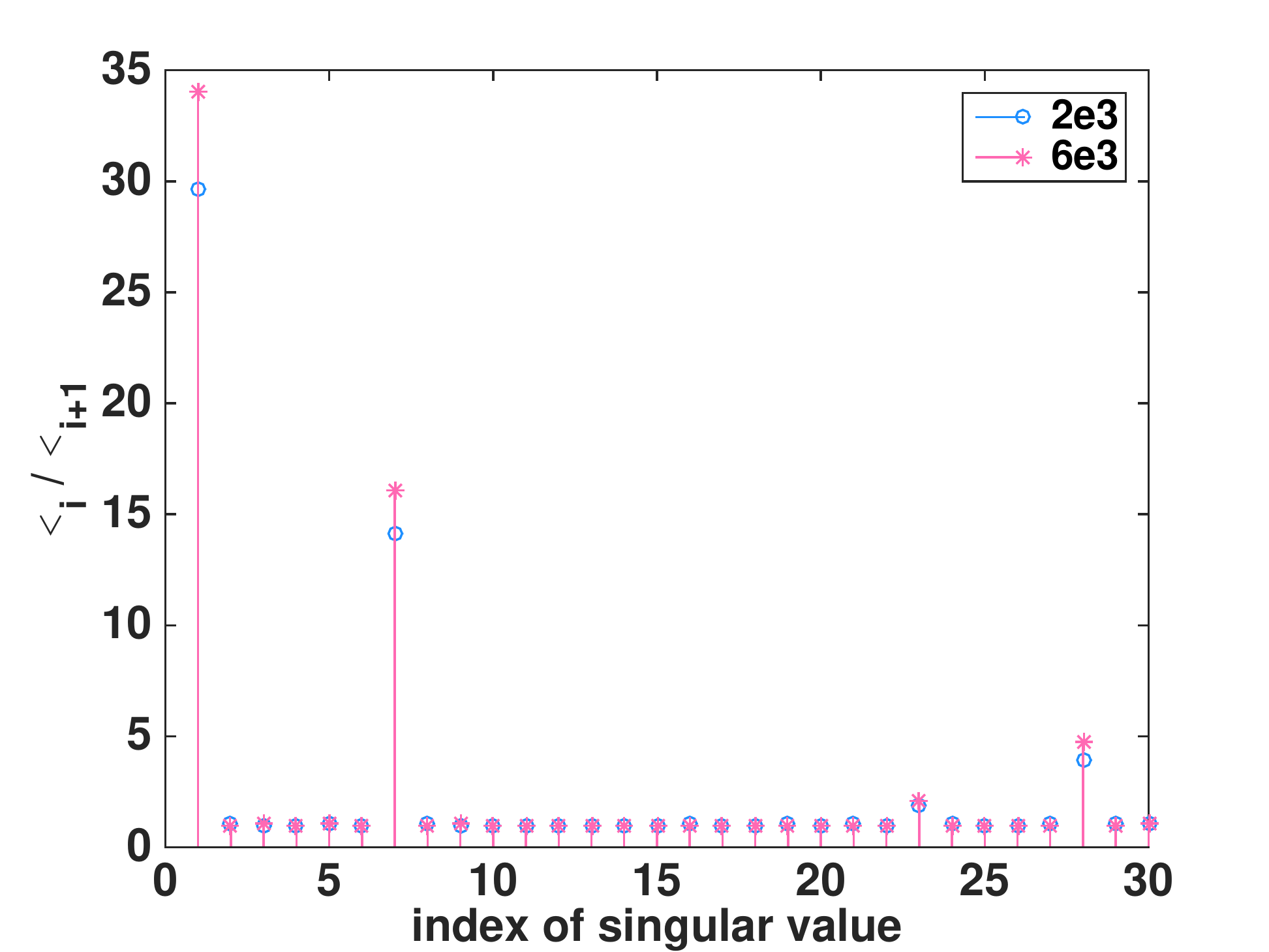}
                \caption{$d = 6$, $\frac{1}{1+\norm{\vecx-\vecy}_2^2/h^2}$}
                \label{fig:sval_ratio_d6_cauchy}
            \end{subfigure}
        \caption{Singular value ratio $\sigma_i / \sigma_{i+1}$ vs.\ index $i$. The singular values are ordered such that
        $\sigma_1 \ge \sigma_2 \ge \cdots \ge \sigma_n$,
        and the legend represents the data size (matrix dimension).
        Subplot (a) and (b) used Gaussian kernel $\exp(-\norm{\vecx-\vecy}_2^2/h^2)$ and subplot (c) and (d) used Cauchy kernel $1 / (1+\norm{\vecx-\vecy}_2^2/h^2)$.}
        \label{fig:sval_ratio}
    \end{figure}

    We study the group cardinality in detail. Consider \Cref{fig:sval_ratio_d3_gaussian}. We consider first the groups separated by significant decays. The indices with ratios above 4 are as follows with the ratio shown in parenthesis,
\[
    1~(17.3),~4~(17.1),~10~(7.3),~20~(4.5)
\]
The indices can be accurately described as the cumulative sum of the number of separate terms in the following Taylor expansion terms from the first-order Fourier term,
\[
    \underbrace{0th}_{\text{1 term}}, ~\underbrace{1st}_{\text{3 terms}}, ~\underbrace{2nd}_{\text{6 terms}},~ \underbrace{3rd}_{\text{10 terms}}
\]
This term arrangement suggests that the polynomial approximation for the first-order Fourier term contributes to the significant gains in accuracy. We note that the higher-order Fourier terms contribute as well, but with fewer accuracy gains.

We consider next the groups separated by less significant decays. The indices with ratios above 2 are
\[
    1~(17.3),~4~(17.1),~7~(2.3),~10~(7.3),~11~(2.3),~17~(3.5),~20~(4.6)
\]
These subtler gains in accuracy may come from the contributions of other higher-order expansion terms. One possible grouping is as follows, with the Fourier order and the Taylor order shown in order in parenthesis,
\[
    \underbrace{(1,0)}_{\text{1 term}}, ~\underbrace{ (2,0),(3,0),(4,0)}_{\text{3 terms}}, ~ \underbrace{ (1,1)}_{\text{3 terms}},~\underbrace{ (2,1)}_{\text{3 terms}},~\underbrace{ (5,0)}_{\text{1 term}},~\underbrace{ (1,2)}_{\text{6 terms}},~\underbrace{ (3,1)}_{\text{3 terms}}
\]
Applying a cumulative sum of the number of these terms yields the above indices.

    Our explanation adopts the idea of the Fourier-Taylor approach instead of the Chebyshev approach. The key reason is that the Fourier approach allows us to group separate terms into finer sets that contribute to subtler error decays. The Chebyshev approach considers $\norm{\vecx-\vecy}^{2l}$ as a unit, which has $\binom{l+d+1}{d+1}$ separate terms; whereas the Fourier approach considers $\left(\vecrho_\vecx^T\vecrho_\vecy\right)^l$ as a unit, which only involves $\binom{l+d-1}{d-1}$ separate terms. 

    \subsection{Practical Guidance}
    The group pattern in the singular values offers insights to many
    phenomena in practice. One example is the threshold \mrank{}s in
    matrix approximations, namely the input \mrank{} has to increase beyond some
    threshold to observe a further decay in the matrix approximation
    error. In practice, our quantification for the group cardinalities can provide candidate \mrank{} inputs for algorithms that take input as a request \mrank{}.

    We examine the effectiveness of our guidance on two popular RBF
    kernel matrices with different low-rank algorithms. We expect significant decays in the reconstruction
    error around \mrank{} $R = \binom{n+d}{d}$.
    For the leverage-score Nystr\"om method, we oversample 30 and 60 columns for $d=6$ and $d=8$, respectively, and report the mean of reconstruction error out of 5 independent runs.
    \Cref{fig:sampling_columns} shows the reconstruction error as a
    function of the approximation \mrank{}. For all the algorithms, a significant decay in error occurs at rank 1, 7, and 28 for $d = 6$, and at rank 1, 9 and 45 for $d = 8$, in perfect agreements with our expectation. Note there exist several subtle perturbations, and they may be caused by the data layouts and contributions from other expansion terms.

    \begin{figure}[htbp]
        \centering
            \begin{subfigure}[b]{0.45\textwidth}
                \includegraphics[width=\textwidth]{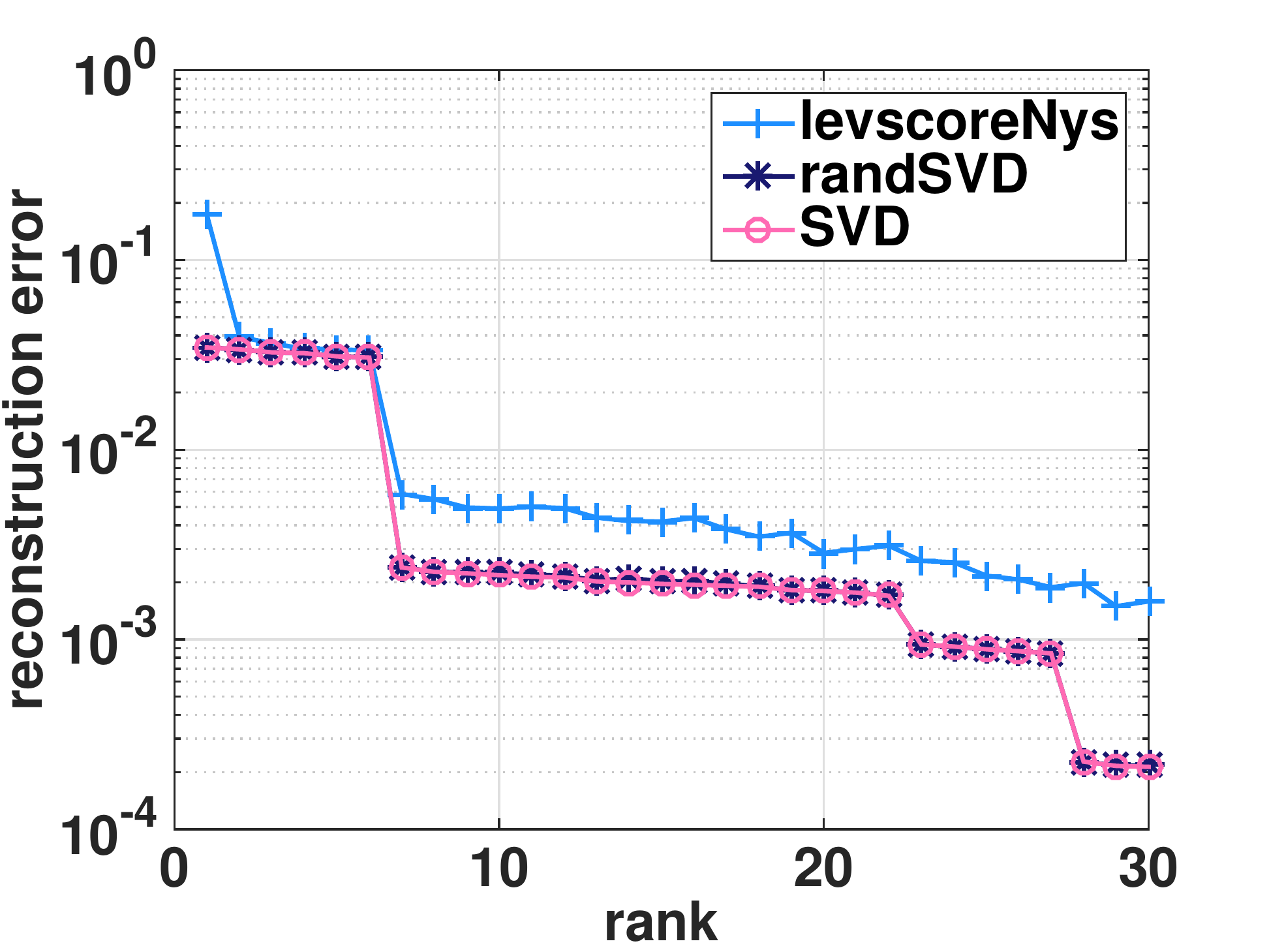}
                \caption{$d = 6$, $\exp(-\norm{\vecx-\vecy}_2^2/h^2)$}
            \end{subfigure}
            \begin{subfigure}[b]{0.45\textwidth}
                \includegraphics[width=\textwidth]{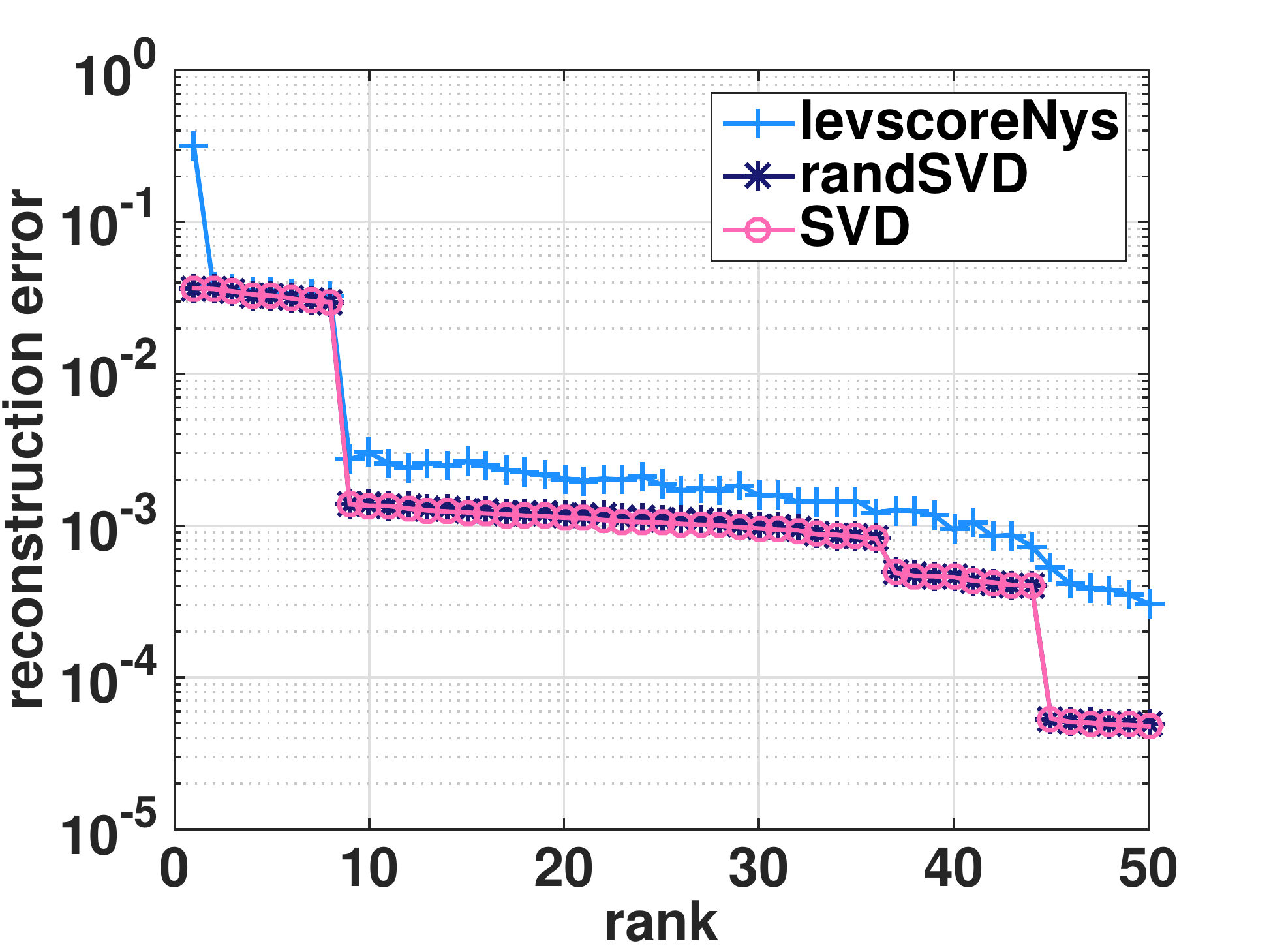}
                \caption{$d = 8$, $\exp(-\norm{\vecx-\vecy}_2^2/h^2)$}
            \end{subfigure}
               \begin{subfigure}[b]{0.45\textwidth}
                   \includegraphics[width=\textwidth]{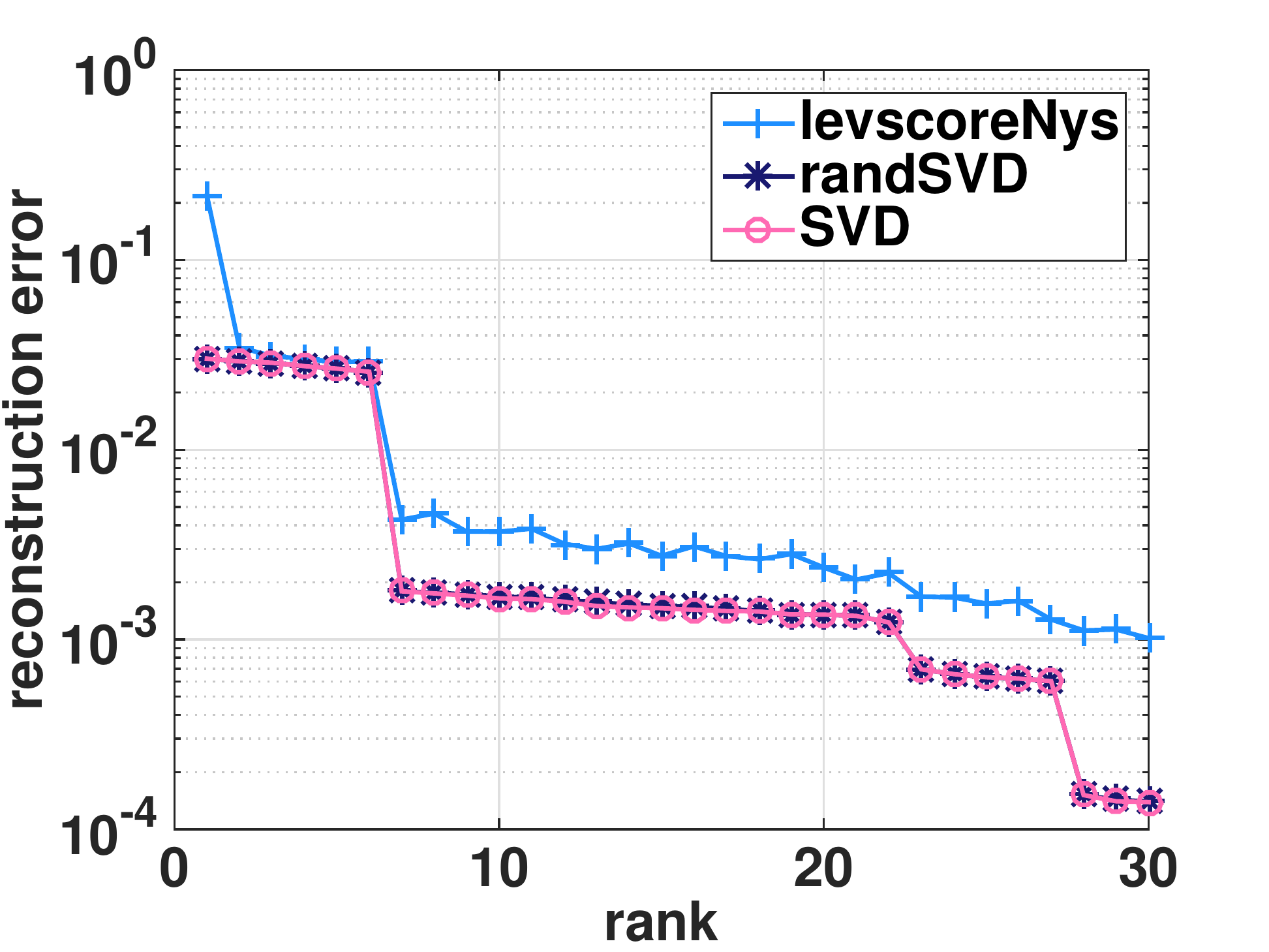}
                   \caption{$d = 6$, $\frac{1}{1+\norm{\vecx-\vecy}_2^2/h^2}$}
               \end{subfigure}
                \begin{subfigure}[b]{0.45\textwidth}
                    \includegraphics[width=\textwidth]{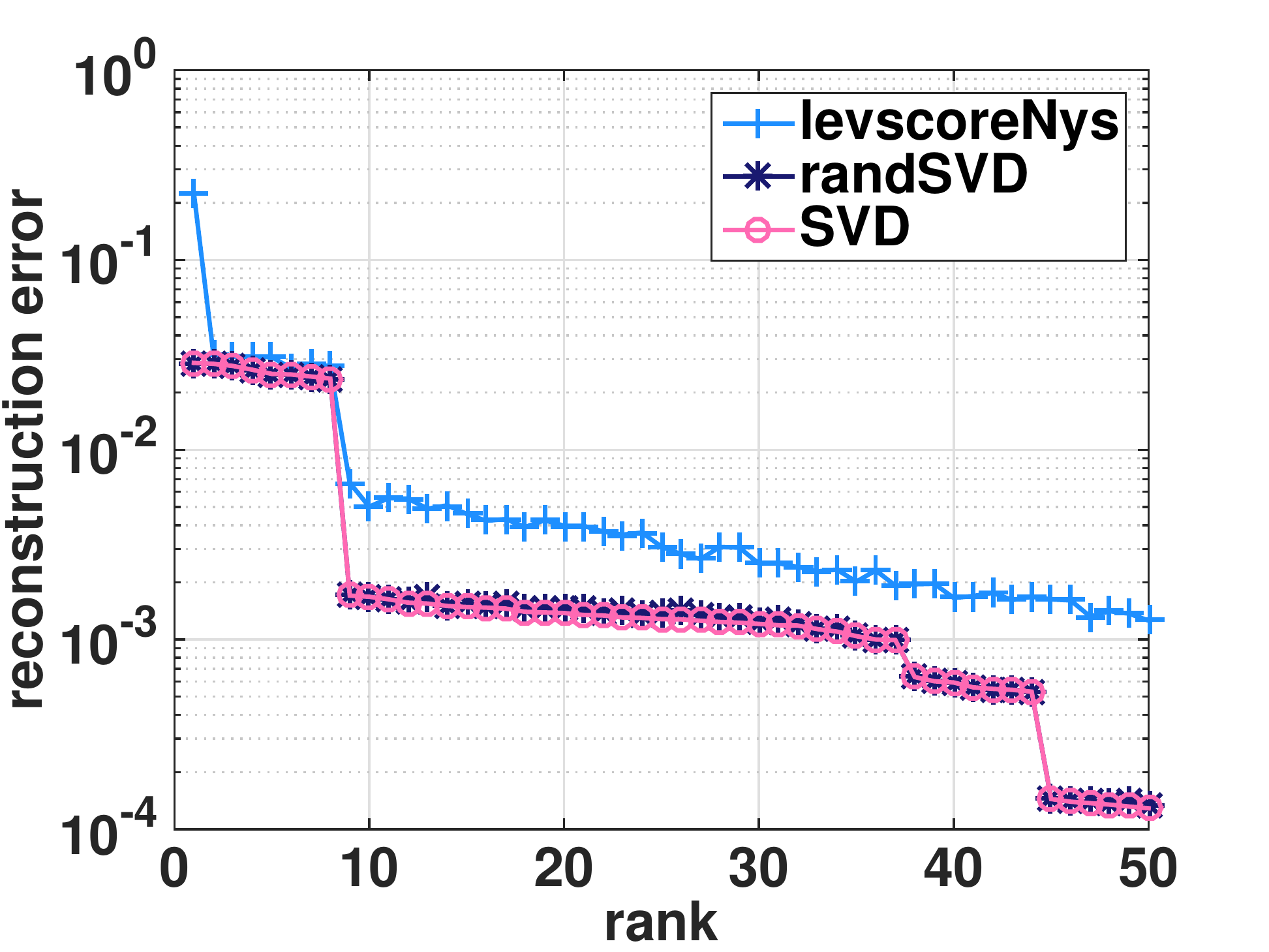}
                    \caption{$d = 8$, $\frac{1}{1+\norm{\vecx-\vecy}_2^2/h^2}$}
                \end{subfigure}
        \caption{Reconstruction error vs.\ approximation rank. The legend represents low-rank algorithms, ``levscoreNys" is the leverage-score Nystr\"om method, ``randSVD" is the randomized SVD with iteration parameter to be 2, and ``SVD" is the exact SVD. The bandwidth parameter $h$ was set to be the maximum pairwise distance. A significant decay in error occurs at rank = $\binom{n+d}{d}$ ($n=1,2,3$) for all experiments.}
        \label{fig:sampling_columns}
    \end{figure}

    \section{Conclusions}

    Motivated by the practical success of low-rank algorithms for RBF
    kernel matrices with high-dimensional datasets,
    we study the \mrank{} of RBF kernel matrices by analyzing its upper bound, that is, the \frank{} of RBF kernels. Specifically, we approximate the RBF kernel by a
    finite sum of separate products, and quantify the 
    upper bounds on the \frank{}s and the $L_\infty$ error for such approximations in their explicit formats. Our three main results are as follows. 

    First, for a fixed precision, the \frank{} of an RBF is a power of data dimension $d$ in the worst case, and the power is related to the precision. The exponential growth for multivariate functions from a simple analysis is absent for RBFs. 

    Second, for a fixed \frank{}, the approximation error will be reduced when the diameters of either the target domain or the source domain decrease.

    Third, we observed group patterns in the magnitude of singular values of RBF kernel matrices. We explained this by our analytic expansion of the kernel function. Specifically, the number of singular values of the same magnitude can be computed by an appropriate grouping of the separate terms in the function's separable form. Very commonly, the cardinality of the $i$-th group is $\binom{i+d-1}{d-1}$.

\newpage
\bibliographystyle{siam}
\bibliography{TBBF2017}
\end{document}